\newtheorem{thm}{Theorem}
\newtheorem{conj}{Conjecture}
\newtheorem{prop}{Proposition}
\newtheorem{lem}{Lemma}
\theoremstyle{definition}
\newtheorem*{rem}{Remark}
\newtheorem*{ack}{Acknowledgements}
\newcommand{\mc}{\mathcal}
\newcommand{\mf}{\mathfrak}
\newcommand{\leqs}{\leqslant}
\newcommand{\geqs}{\geqslant}
\newcommand{\ul}{\underline}
\newcommand{\be}{\begin{equation*}}
\newcommand{\ee}{\end{equation*} }
\newcommand{\ben}{\begin{equation}}
\newcommand{\een}{\end{equation} }
\newcommand{\bs}{\begin{split}}
\newcommand{\es}{\end{split}}
\newcommand{\bmu}{\begin{multline*}}
\newcommand{\emu}{\end{multline*}}
\newcommand{\bmun}{\begin{multline}}
\newcommand{\emun}{\end{multline}}
\begin{document}

\title[The splitting conjecture]{On the splitting conjecture in the hybrid model for the Riemann zeta function}

\author{Winston Heap}
%\address{Max Planck Institute for Mathematics, Vivatsgasse 7, 53111 Bonn.}
\email{winstonheap@gmail.com}

\begin{abstract}We show that the splitting conjecture in the hybrid model of Gonek--Hughes--Keating holds to order on the Riemann hypothesis. Our results are valid in a larger range of the parameter $X$ which mediates between the partial Euler and Hadamard products. We also show that the asymptotic splitting conjecture holds for this larger range of $X$ in the cases of the second and fourth moments. %We utilise the recent developments of Soundararajan, Harper and Radziwi\l\l--Soundararajan on the moments of zeta and $L$-functions.
\end{abstract}

\maketitle

\section{Introduction}

The moments of the Riemann zeta function have been the subject of several conjectural methods in recent years. Since the second and fourth moments of Hardy--Littlewood \cite{HL} and Ingham \cite{I}, it is only relatively recently that a full conjecture for all moments was given. This began with the work of Keating--Snaith \cite{KS} who used the now famous connection with random matrix theory to conjecture that  for real $k>-1/2$,
\[
\frac{1}{T}\int_T^{2T}|\zeta(\tfrac12+it)|^{2k}dt\sim a(k)g(k)(\log T)^{k^2}
\]
where 
\begin{equation}\label{a(k)}
a(k)=\prod_p \bigg(1-\frac1p\bigg)^{k^2}\sum_{m\geqs 0}\frac{d_k(p^m)^2}{p^m}
\end{equation}
and 
\begin{equation}\label{g(k)}
g(k)=\frac{G(k+1)^2}{G(2k+1)}
\end{equation}
where $G$ is the Barnes $G$-function. This was preceded by conjectures for the 6th and 8th moments due to Conrey--Ghosh \cite{CG} and Conrey--Gonek \cite{CGk}, respectively, using  number theoretic methods. The Keating--Snaith conjecture has since been derived with various different approaches \cite{cfkrs,DGH,GHK}. 

A drawback of Keating and Snaith's method was that the arithmetic factor $a(k)$ had to be incorporated in an ad-hoc way since there was no input from primes in their random matrix theory model. This was remedied in the method of Gonek--Hughes--Keating (G--H--K) \cite{GHK} which forms the main focus of this paper.  

The first step of G--H--K's method was to express the zeta function as the product of partial Euler and Hadamard products. Precisely, Theorem 1 of \cite{GHK} states that for $2\leqs X\leqs t^{1/3}$ and large $t$,  
\begin{equation}
\label{hybrid formula}
\zeta(\tfrac{1}{2}+it)=P_X(\tfrac{1}{2}+it)Z_X(\tfrac{1}{2}+it)\Big(1+O\Big(\frac{1}{\log X}\Big)\Big)
\end{equation}
where 
\[
P_X(s)=\exp\bigg(\sum_{n\leqs X}\frac{\Lambda(n)}{n^s\log n}\bigg)
,\,\,\,\,\,\,\,
Z_X(s)=\exp\bigg(-\sum_{\rho}U((s-\rho)\log X)\bigg)
\]
and\[
U(z)=\int_{0}^\infty u(x)E_1(z\log x)dx
\]
with $E_1(z)=\int_z^\infty e^{-w}dw/w$ and $u(x)$ a smooth, non-negative function of mass 1 with support in $[e^{1-1/X},e]$.  
To give a rough idea of these objects, note that from the support conditions on $u$ we have $U(z)\approx E_1(z)$. This has mass concentrated in the region $z\ll 1$ where we have the approximation $E_1(z)\approx-\gamma-\log z$. Thus, roughly speaking, $Z_X(s)\approx \prod_{|\Im(s)-\Im(\rho)|\ll1/\log X} ((s-\rho)e^\gamma\log X)$.  
%\[
%\sum_{n \leqs X}\frac{\Lambda(n)}{n^s\log n}=\sum_{p,m: p^m\leqs X}\frac{1}{mp^{ms}}\approx \sum_{p\leqs X}\log(1-p^{-s})^{-1}
%\] 
Also, from the definition of the von Mangoldt function and the Taylor series for the logarithm we find  $P_X(s)\approx \prod_{p\leqs X}(1-p^{-s})^{-1}$. Therefore, we can indeed view $P_X(s)$ and $Z_X(s)$ as partial Euler and Hadamard products.

G--H--K then proceeded to compute the moments of the Euler product, showing that for $X\ll (\log T)^{2-\epsilon}$,
\begin{equation}\label{P moment}
\frac{1}{T}\int_{T}^{2T}|P_X(\tfrac{1}{2}+it)|^{2k}dt\sim a(k)(e^\gamma\log X)^{k^2},\qquad k\in\mathbb{R}.
\end{equation}
They conjectured with random matrix theory that 
 \begin{equation}\label{Z moment}
\,\,\,\,\frac{1}{T}\int_{T}^{2T}|Z_X(\tfrac{1}{2}+it)|^{2k}dt
\sim
g(k)\Big(\frac{\log T}{e^{\gamma}\log X}\Big)^{k^2},\,\,\,\,\,\,\,\, k>-1/2
\end{equation}
%\[
%\frac{1}{T}\int_{T}^{2T}|Z_X(\tfrac{1}{2}+it)|^{2k}dt
%\sim \begin{cases}
%g(1)\cdot\frac{\log T}{e^{\gamma}\log X},\qquad &k=1
%\\
%g(2)\cdot\Big(\frac{\log T}{e^{\gamma}\log X}\Big)^4,\,\, &k=2
%\end{cases}
%\]
and then proved this in the cases $k=1,2$ for $X\ll (\log T)^{2-\epsilon}$.
% \[
%\frac{1}{T}\int_{T}^{2T}|Z_X(\tfrac{1}{2}+it)|^{2k}dt
%\sim
%g(k)\Big(\frac{\log T}{e^{\gamma}\log X}\Big)^{k^2}
%\]
In order to recover the Keating--Snaith conjecture they assumed that the 
moments of the product of $P_X$ and $Z_X$ should split as the product of moments. 
\begin{conj}[Splitting conjecture, \cite{GHK}] Let $X, T\to\infty$ with $X\ll (\log T)^{2-\epsilon}$. Then for fixed $k> -1/2$ we have 
\[
\frac{1}{T}\int_{T}^{2T}|P_X(\tfrac{1}{2}+it)Z_X(\tfrac{1}{2}+it)|^{2k}dt
\sim 
\frac{1}{T}\int_{T}^{2T}|P_X(\tfrac{1}{2}+it)|^{2k}dt\cdot 
\frac{1}{T}\int_{T}^{2T}|Z_X(\tfrac{1}{2}+it)|^{2k}dt.
\]
\end{conj}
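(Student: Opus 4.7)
The plan is to prove the splitting ``to order'' by reducing both sides to the moments of $\zeta$ via the hybrid formula \eqref{hybrid formula}, and to treat the asymptotic splitting at $k=1,2$ by a direct moment computation. First I would raise \eqref{hybrid formula} to the $2k$-th power, yielding
\begin{equation*}
|P_X(\tfrac12+it)Z_X(\tfrac12+it)|^{2k} = |\zeta(\tfrac12+it)|^{2k}\Big(1+O_k\Big(\tfrac{1}{\log X}\Big)\Big),
\end{equation*}
valid uniformly for $T\leqs t\leqs 2T$ since the condition $X\leqs t^{1/3}$ is amply satisfied when $X\ll(\log T)^{2-\epsilon}$. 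Integrating and invoking, on RH, Harper's sharp upper bound together with the Heap--Radziwill--Soundararajan lower bound for $\int_T^{2T}|\zeta(\tfrac12+it)|^{2k}dt$, the left-hand side of the conjecture is of order $T(\log T)^{k^2}$.

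For the right-hand side, \eqref{P moment} already delivers the full asymptotic $Ta(k)(e^\gamma\log X)^{k^2}$ for the Euler factor, so it remains to show $\int_T^{2T}|Z_X(\tfrac12+it)|^{2k}dt\asymp_k T(\log T/\log X)^{k^2}$ --- an asymptotic known only for $k=1,2$ via \eqref{Z moment}. For the upper bound I would use $|Z_X|^{2k}\approx |\zeta|^{2k}|P_X|^{-2k}$ together with H\"older, combining Harper's bound for $\zeta$ with the analogue of \eqref{P moment} for negative real exponents (provable by the same Euler product manipulation). For the lower bound I would transplant the Heap--Radziwill--Soundararajan strategy to $\log Z_X=\log\zeta-\log P_X+O(1/\log X)$, noting that both $\log\zeta$ (on RH) and $\log P_X$ admit accessible short Dirichlet polynomial approximations, so one can run the standard second-moment-versus-twisted-mollifier machinery on the difference. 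This would give matching order bounds on both sides of the splitting conjecture.

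For the asymptotic splitting when $k=1,2$, the approach is a direct moment evaluation: expand $|P_X|^{2k}$ as a Dirichlet polynomial via the Taylor series of the defining exponential (truncated at a suitable level since the primes involved are all $\leqs X$), expand $|Z_X|^{2k}$ through the analogous unfolding or through an explicit formula for the sum over zeros, multiply out, and evaluate the diagonal contribution. Reaching the extended range $X\ll(\log T)^{2-\epsilon}$ requires refined mean-value theorems for Dirichlet polynomials supported on $X$-smooth integers, and the off-diagonal terms together with the cross terms between $P_X$ and $Z_X$ must be shown to contribute strictly lower order, which is where the longer $X$-range is actually felt in the analysis.

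The principal obstacle is producing the lower bound $\int|Z_X|^{2k}\gg T(\log T/\log X)^{k^2}$ for non-integer $k>-1/2$: adapting the Heap--Radziwill--Soundararajan machinery to $\log Z_X$ demands approximating it by a short prime sum (obtained by subtracting $\log P_X$ from a Selberg-type approximation of $\log\zeta$) with errors controlled uniformly throughout the enlarged range of $X$, and it is there that the main technical work lies. Without such a lower bound, one can only establish the splitting from above.
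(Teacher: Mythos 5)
Your proposal correctly mirrors the paper's architecture: reduce via the hybrid formula \eqref{hybrid formula} to zeta moments (using Harper's upper bound and Heap--Radziwi\l\l--Soundararajan's lower bound for $\int|\zeta|^{2k}$), compute the $P_X$ moment asymptotically (Proposition \ref{P prop}), obtain matching order bounds for the $Z_X$ moment (Proposition \ref{Z prop}), and do a direct moment calculation for $k=1,2$ (Proposition \ref{Z asymp prop}). That skeleton is exactly what the paper uses to establish Theorems \ref{main thm} and \ref{asymp thm}, and you are right that the conjecture itself (with $\sim$ rather than $\asymp$) remains out of reach.

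However, there is a concrete gap in your proposed upper bound for $\frac1T\int_T^{2T}|Z_X(\tfrac12+it)|^{2k}dt$. You suggest applying H\"older to $|\zeta|^{2k}|P_X|^{-2k}$, combining Harper's bound for $\zeta$ with a negative-exponent analogue of \eqref{P moment}. But for conjugate exponents $p,q>1$ this yields
\[
\Big(\tfrac1T\int|\zeta|^{2kp}\Big)^{1/p}\Big(\tfrac1T\int|P_X|^{-2kq}\Big)^{1/q}
\ll (\log T)^{k^2 p}(\log X)^{k^2 q},
\]
which is far larger than the target $(\log T/\log X)^{k^2}$: the factor $(\log T)^{k^2(p-1)}$ grows while we would need it to shrink against $(\log X)^{k^2(q+1)}$. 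H\"older severs the product and cannot see the crucial cancellation between $\zeta$ and $P_X^{-1}$. The paper instead replaces $P_X^{-k}$ by its Dirichlet polynomial $D(t,-k)$ on a good set, then runs Harper's ``key inequality'' method: Lemma \ref{sound lem} bounds $\log|\zeta|$ by a tapered prime sum, the range of primes is split into blocks $\mathcal{P}_{i,j}$ giving the product decomposition in Lemma \ref{zeta lem}, and the resulting \emph{twisted} mean value $\frac1T\int|D(t,-k)|^2|\mathcal{M}|^2\prod_i|\mathcal{N}_{i,j}|^2$ is computed directly in Lemma \ref{big int lem}. It is in this genuine twisted-moment computation --- not in a product-of-moments inequality --- that the $(\log X)^{-k^2}$ saving appears. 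Similarly, for the lower bound the paper does not approximate $\log Z_X$ directly by a prime sum; rather, it applies H\"older to $\zeta\mathcal{N}(t,k-1)\overline{\mathcal{N}(t,k)}|D(t,-k)|^2$ (Propositions \ref{lower prop}--\ref{upper prop 2}), treating $D(t,-k)$ as an additional twist in each factor. Your description of subtracting $\log P_X$ from a Selberg-type approximation of $\log\zeta$ is plausible in spirit but is not the mechanism used, and you would still face the same issue of evaluating the resulting correlated mean values rather than bounding them factor-by-factor.
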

Their  reasoning behind this conjecture was that since $P_X$ and $Z_X$ oscillate at different scales ($1/\log X$  vs.$\,1/\log T$), their contributions should act independently and hence the moment should split to leading order. They verified this in the cases $k=1,2$ for $X\ll (\log T)^{2-\epsilon}$. The methodology of the hybrid model has since been used in various different settings to acquire conjectures for all sorts of $L$-functions \cite{AS,BK,BK2,BGM,BF,D,H}. In all cases, an equivalent version of the splitting conjecture plays a key role.  

In this paper we prove that the splitting conjecture holds to order on the Riemann hypothesis (RH). Furthermore, we can extend the range of $X$ past $(\log T)^{2-\epsilon}$. 

\begin{thm}\label{main thm} Assume RH. Let $\epsilon, k>0$ be fixed and suppose $X, T\to\infty$ with $X\leqs (\log T)^{\theta_k-\epsilon}$ where
$\theta_k=2\sqrt{1+{1}/{2|k|}}$. Then 
\[
\frac{1}{T}\int_{T}^{2T}|P_X(\tfrac{1}{2}+it)Z_X(\tfrac{1}{2}+it)|^{2k}dt
\asymp
\frac{1}{T}\int_{T}^{2T}|P_X(\tfrac{1}{2}+it)|^{2k}dt\cdot 
\frac{1}{T}\int_{T}^{2T}|Z_X(\tfrac{1}{2}+it)|^{2k}dt.
\]
\end{thm}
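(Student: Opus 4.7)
Write $M_F:=T^{-1}\int_T^{2T}|F(\tfrac12+it)|^{2k}dt$. The hybrid formula \eqref{hybrid formula} gives $|P_XZ_X|^{2k}=|\zeta|^{2k}(1+O(1/\log X))$ for fixed $k$, so $M_{P_XZ_X}=(1+o(1))M_\zeta$. Unconditionally one has $M_\zeta\gg(\log T)^{k^2}$ by the Heap--Radziwi\l\l--Soundararajan lower bound (due to Radziwi\l\l--Soundararajan for $0<k\leqs 1$), and on RH Harper's sharp conditional bound gives $M_\zeta\ll(\log T)^{k^2}$. Thus the LHS of Theorem~\ref{main thm} is of order $(\log T)^{k^2}$, and the theorem would follow from the pair of bounds
\[
M_{P_X}\asymp(\log X)^{k^2}\qquad\text{and}\qquad M_{Z_X}\asymp\Big(\frac{\log T}{\log X}\Big)^{k^2},
\]
valid throughout the range $X\leqs(\log T)^{\theta_k-\epsilon}$; the product then reproduces the LHS.

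The first bound is Gonek--Hughes--Keating's asymptotic \eqref{P moment} for $X\leqs(\log T)^{2-\epsilon}$, and I expect their Dirichlet-series mean value argument to extend by straightforward modifications, retaining more terms in the Euler expansion of $P_X^k$ and controlling the tail through the near-Gaussian concentration of $\log|P_X|$; the lower bound is then a routine two-moment argument. The second bound is the crux. The heuristic is that since $\log|\zeta|$ and $\log|P_X|$ are both approximately Gaussian and highly correlated through their shared dependence on the primes up to $X$, the difference $\log|Z_X|=\log|\zeta|-\log|P_X|+o(1)$ has variance $\asymp\tfrac12\log(\log T/\log X)$, so its exponential $2k$th moments are of size $(\log T/\log X)^{k^2}$. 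To realise this rigorously I would invoke Soundararajan's conditional upper bound of the schematic form
\[
\log|\zeta(\tfrac12+it)|\leqs\Re\!\sum_{n\leqs Y}\!\frac{\Lambda(n)w_Y(n)}{n^{1/2+it}\log n}+C\frac{\log T}{\log Y}+O(1)
\]
with a parameter $Y\geqs X$; subtracting $\log|P_X|$ leaves a Dirichlet polynomial over primes in $(X,Y]$, whose $2k$th exponential moment is computed as $\asymp(\log Y/\log X)^{k^2}$ by a standard mean value theorem and yields the upper bound for $M_{Z_X}$. For the matching lower bound I would adapt the Radziwi\l\l--Soundararajan twisted second-moment technique to the quotient $\zeta/P_X$, mollified by a Dirichlet polynomial supported on primes in $(X,Y]$.

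The principal obstacle is the optimisation producing the exponent $\theta_k=2\sqrt{1+1/(2|k|)}$. The parameter $Y$ must be large enough that the error $C\log T/\log Y$ does not damage the bound, yet small enough that the mean value theorem for the primes in $(X,Y]$ retains its full strength; together with $Y\geqs X$ these two requirements combine into a quadratic inequality in the parameters $\log X/\log\log T$ and $\log Y/\log\log T$ whose critical value is precisely $\theta_k$. Carrying the matching lower bound all the way to this boundary -- making the Radziwi\l\l--Soundararajan mollifier argument work up to $X\asymp(\log T)^{\theta_k-\epsilon}$ rather than only throughout the comfortable GHK range $(\log T)^{2-\epsilon}$ -- is where I expect the main technical work to lie, since near the boundary one is close to losing control of the second moment of the mollified ratio and careful handling of the Gaussian constants in the $(X,Y]$ Dirichlet polynomial is required.
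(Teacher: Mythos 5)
Your high-level reduction is exactly the paper's: via \eqref{hybrid formula} you replace $M_{P_X Z_X}$ by $M_\zeta$, appeal to Harper for the conditional upper bound and to Heap--Soundararajan for the unconditional lower bound to get $M_\zeta \asymp (\log T)^{k^2}$, and then it suffices to establish $M_{P_X}\asymp(\log X)^{k^2}$ and $M_{Z_X}\asymp(\log T/\log X)^{k^2}$ in the range $X\leqs(\log T)^{\theta_k-\epsilon}$. These are precisely Propositions~\ref{P prop} and~\ref{Z prop}, and the last paragraph before Section~\ref{prime sec} performs exactly this assembly.

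The gap is in how you propose to reach the exponent $\theta_k$. You locate the optimisation in a tension between the error term $C\log T/\log Y$ in Soundararajan's inequality and the strength of the mean value theorem for the primes in $(X,Y]$. That is not where $\theta_k$ comes from, and that balance would not produce it. The actual bottleneck is \emph{pointwise}: one needs to approximate $P_X(1/2+it)^{\pm k}$ by a short Dirichlet polynomial for $t$ outside a negligible exceptional set, and the exceptional set is controlled by choosing a large moment order $m$ in Lemma~\ref{sound lem 2}. That lemma only applies for $m\leqs(\log T-\log_2 T)/\log X$, while the truncation point $m$ must be taken comparable to $V_{\max}$, the pointwise maximum of $\Re\sum_{n\leqs X}\Lambda(n)n^{-1/2-it}/\log n$. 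Unconditionally $V_{\max}\lesssim 2X^{1/2}/\log X$, which already caps $X$ near $(\log T)^2(\log_2 T)^2$. The RH gain is entirely due to Theorem~\ref{prime sum thm}, which replaces this trivial bound by $V_{\max}\lesssim(\tfrac12+o(1))(\log(X^{1/2}/\log T)+O(\log_2 X))\log T/\log_2 T$, proved via a contour-integral relation between the prime sum and $S(t)$ together with the Carneiro--Chandee--Milinovich bound $|S(t)|\leqs(\tfrac14+o(1))\log t/\log_2 t$. Feeding this into the constraint $m\leqs\log T/\log X$, with $\theta_k=2+\nu_k$, yields exactly $\nu_k\leqs 2\sqrt{1+1/(2k)}-2$; see the proof of Lemma~\ref{E meas lem}. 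Your outline contains no analogue of Theorem~\ref{prime sum thm} and no reference to the $S(t)$ machinery, so the proposed route does not explain, let alone achieve, the range $X\leqs(\log T)^{\theta_k-\epsilon}$. It is also worth noting that the upper bound for $M_{Z_X}$ in the paper does not proceed by subtracting $\log|P_X|$ from Soundararajan's majorant and working with primes in $(X,Y]$ as you suggest, but rather replaces $P_X^{-k}$ by its Dirichlet polynomial $D(t,-k)$ on the good set and multiplies this into the Radziwi\l\l--Soundararajan/Harper product $\prod_i|\mathcal N_{i,j}(t,k)|^2$ majorising $|\zeta|^{2k}$ (Lemma~\ref{zeta lem} and Lemma~\ref{big int lem}); the lower bound is a twisted Cauchy--Schwarz/H\"older argument against $\zeta\mathcal N(t,k-1)\overline{\mathcal N(t,k)}|D(t,-k)|^2$, much as you anticipate, but again the permissible range of $X$ there is governed by Lemma~\ref{E meas lem}, hence ultimately by Theorem~\ref{prime sum thm}.
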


%\begin{rem}
%Using similar ideas from proof one can improve the range of $X$ significantly to something of the order $T^{o(1)}$ if one restricts the integrals on either side to subsets of `typical' $t\in[T,2T]$. We say more on this in the section \ref{final sec}.
%\end{rem}

As mentioned, this holds in a range of $X$ larger than originally conjectured.  We can also extend the range of $X$ in the asymptotic results \eqref{P moment} and \eqref{Z moment}, both unconditionally and on RH. This gives the following. 

\begin{thm}\label{asymp thm}
The Splitting conjecture holds for $k=1,2$ in the range \[X\leqs \tfrac{1}{10^4}(\log T)^2(\log_2 T)^2.\]
Assuming RH, we may take 
\[
X\leqs 
\begin{cases}
(\log T)^{\sqrt{6}-\epsilon}\,\,\text{ when } &k=1,
\\
(\log T)^{\sqrt{5}-\epsilon}\,\,\text{ when } &k=2.
\end{cases}
\]
%$X\leqs (\log T)^{\sqrt{6}-\epsilon}$ when $k=1$ and $X\leqs (\log T)^{\sqrt{5}-\epsilon}$ when $k=2$.
\end{thm}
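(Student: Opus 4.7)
The plan is to establish Theorem \ref{asymp thm} by computing asymptotically the three averages
\[
A_T=\frac{1}{T}\int_T^{2T}|P_XZ_X|^{2k}dt,\qquad B_T=\frac{1}{T}\int_T^{2T}|P_X|^{2k}dt,\qquad C_T=\frac{1}{T}\int_T^{2T}|Z_X|^{2k}dt
\]
in the enlarged range of $X$, and then verifying $A_T\sim B_TC_T$ by direct comparison of the three leading constants.

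The first step is to evaluate $A_T$. The hybrid formula \eqref{hybrid formula} gives pointwise $|P_XZ_X|^{2k}=|\zeta(\tfrac12+it)|^{2k}(1+O(1/\log X))$, so by the Hardy--Littlewood second moment theorem for $k=1$ and Ingham's fourth moment theorem for $k=2$,
\[
A_T\sim\frac{1}{T}\int_T^{2T}|\zeta(\tfrac12+it)|^{2k}dt\sim a(k)g(k)(\log T)^{k^2},
\]
using the evaluations $a(1)g(1)=1$ and $a(2)g(2)=1/(2\pi^2)$ to identify the classical constants with the $a(k)g(k)$ from \eqref{a(k)} and \eqref{g(k)}. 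This step is insensitive to the range of $X$ so long as the hybrid formula itself is valid.

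The second step is the extension of the Euler-product moment \eqref{P moment} for $k=1,2$ to the asserted ranges of $X$. I would expand $|P_X(\tfrac12+it)|^{2k}$ as a Dirichlet series supported on $X$-smooth integers by Taylor-expanding the exponentials, then apply a mean-value theorem for Dirichlet polynomials of the Montgomery--Vaughan type. The restriction $X\leqs(\log T)^{2-\epsilon}$ in \cite{GHK} comes from demanding that the effective length of this polynomial be $o(T)$; to reach $\tfrac{1}{10^4}(\log T)^2(\log_2 T)^2$ unconditionally one must truncate the Taylor expansion more carefully and track the off-diagonal contributions explicitly, from which the explicit constant $10^{-4}$ and the $(\log_2 T)^2$ factor should emerge via the combinatorics of the divisor-like coefficients. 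On RH, these inputs can be replaced by conditional mean-value estimates for long Dirichlet polynomials (in the spirit of Soundararajan and Harper), which relax the length restriction and yield the ranges $(\log T)^{\theta_k-\epsilon}$ with $\theta_1=\sqrt6$ and $\theta_2=\sqrt5$, matching Theorem \ref{main thm}.

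The third step is the parallel extension of the Hadamard-product moment \eqref{Z moment}. Expressing $\log|Z_X|$ as a sum over zeros and squaring, for $k=1,2$ the moment reduces to a pair-correlation sum over zeros together with a diagonal contribution; an application of the Landau--Gonek explicit formula combined with known second-moment bounds for $N(t+h)-N(t)$ (unconditionally via Fujii-type estimates, on RH via sharper pair-correlation inputs) should recover the main term $g(k)(\log T/e^\gamma\log X)^{k^2}$ in the extended range. Once $A_T$, $B_T$, $C_T$ are all known asymptotically, the splitting follows from the trivial identity
\[
a(k)g(k)(\log T)^{k^2}=a(k)(e^\gamma\log X)^{k^2}\cdot g(k)\Big(\frac{\log T}{e^\gamma\log X}\Big)^{k^2}.
\]
The principal obstacle I anticipate is the unconditional extension of $B_T$ past the natural barrier $(\log T)^{2-\epsilon}$: this pushes the relevant Dirichlet polynomial close to length $T^{1+o(1)}$, where the diagonal approximation is only marginally valid and one must exploit both the smoothness of the support of $\Lambda$ and the specific weight $1/\log n$ in the coefficients to prevent an off-diagonal loss that would otherwise swamp the main term.
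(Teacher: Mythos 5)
Your plan correctly identifies the logical skeleton: compute $A_T$ from the classical second and fourth moments via the hybrid formula, compute $B_T$ and $C_T$ separately, and verify the leading constants multiply out. The evaluation of $A_T$ in Step 1 is fine and matches the paper. However, Steps 2 and 3 contain genuine gaps.

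In Step 2 you locate the difficulty in ``off-diagonal contributions'' of the Montgomery--Vaughan mean value estimate, to be controlled via ``the combinatorics of the divisor-like coefficients.'' This is not where the obstacle lives. The barrier at $X=(\log T)^{2-\epsilon}$ arises \emph{before} any mean value theorem is applied: the truncated Taylor expansion of $\exp(k\sum_{n\leqs X}\Lambda(n)n^{-1/2-it}/\log n)$ produces a Dirichlet polynomial whose length is $X^{20|k|M}$, where $M$ is the maximum modulus of the prime sum. With the trivial pointwise bound $M\ll X^{1/2}/\log X$, the polynomial stays $o(T)$ only when $X\ll(\log T)^{2-\epsilon}$, and the diagonal itself already gives the main term. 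To pass this barrier one must replace the pointwise bound by a \emph{typical} bound: restrict to the subset $\mc{S}$ of $[T,2T]$ on which the prime sum is $O(\log_2 T\log_3 T)$, so the polynomial has length $X^{O(\log_2 T\log_3 T)}$, and then show the exceptional set $\mc{E}$ has measure small enough (via Chebyshev and high moments, Lemma \ref{sound lem 2}) that the integral over $\mc{E}$ is negligible. The constant $10^{-4}$ and the $(\log_2 T)^2$ factor come entirely from the interplay between the admissible moment order $m\leqs(\log T-\log_2 T)/\log X$ in that Chebyshev step and the variance $\sum_{p\leqs X}p^{-1}$. Your proposal never introduces this good-set/exceptional-set decomposition, so it would not produce the extended range. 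Likewise, the RH gain is not obtained by ``conditional mean-value estimates for long Dirichlet polynomials''; it comes from a sharper \emph{pointwise} bound on the prime sum, Theorem \ref{prime sum thm}, derived from bounds for $S(t)$, which lowers $V_{\max}$ and hence lengthens the admissible range of $X$.

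In Step 3 you propose to expand $\log|Z_X|$ as a sum over zeros and evaluate $C_T$ via the Landau--Gonek explicit formula and pair-correlation/second-moment estimates for $N(t+h)-N(t)$. This is not the paper's route and I do not think it can deliver the result, particularly for $k=2$: a direct zero-sum attack on the fourth moment of $Z_X$ would require control on the fourth moment of counts of zeros in short windows, which is well beyond known pair-correlation inputs, unconditionally or on RH. The paper instead writes $Z_X=\zeta\cdot P_X^{-1}(1+O(1/\log X))$, approximates $P_X^{-1}$ on $\mc{S}$ by the short Dirichlet polynomial $D(t,-k)$, and then invokes the asymptotic twisted second moment (Balasubramanian--Conrey--Heath-Brown type) for $k=1$ and the asymptotic twisted fourth moment (Hughes--Young, Bettin--Bui--Li--Radziwi{\l\l}) for $k=2$. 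The extended range of $X$ again enters through the same good-set/exceptional-set machinery, combined in the unconditional fourth-moment case with a careful dyadic decomposition of the exceptional set so that the twisted fourth moment can absorb the extra factor $|P_X|^{-4}$. Without this, the assertion that the zero-sum approach ``should recover the main term'' in the extended range is unsupported.
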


Our proofs utilise the recent developments in the theory of moments of $L$-functions due to Soundararajan \cite{S}, Harper \cite{Ha} and Radziwi\l\l--Soundararajan \cite{RS1}. These techniques were originally geared for upper bounds although they can be brought to bear on lower bounds too \cite{HS}.  We highlight three main ideas. 

The first is an innovation of Soundararajan \cite{S}. This was to note that $\log|\zeta(\tfrac{1}{2}+it)|$ can be bounded from above by a sum over primes alone since the zeros contribute negatively to this quantity (see Lemma \ref{sound lem} below and c.f. $\!\!$formula \eqref{hybrid formula}). With this, $|\zeta(\tfrac{1}{2}+it)|$ can be bounded from above by an Euler product of flexible length.

The second idea can be found in a paper of Radziwi\l\l \,\,\cite{R} and features heavily in the later works of Harper \cite{Ha} and Radziwi\l\l--Soundararajan \cite{RS1}. It allows one to compute moments of Euler products provided one can restrict to a certain subset of $[T,2T]$. For the purposes of this discussion we consider the example 
\[
\exp\Big(\sum_{p\leqs Y}p^{-1/2-it}\Big)
\]  
 with $Y=T^{1/(\log\log T)^2}$. On the face of it, this is a very long Dirichlet polynomial. However, if we can restrict $t$ to a subset of $[T,2T]$ on which $|\sum_{p\leqs Y}p^{-1/2-it}|\leqs V$ for a given  $V$, then we can truncate the exponential series effectively using the fact that
 \begin{equation}\label{basic exp trunc}
 e^z\sim \sum_{j=0}^{10V}\frac{z^j}{j!}
 \end{equation}
 for $|z|\leqs V$ and large $V$. The choice of $V$ is naturally dictated by the variance: setting $V=\sum_{p\leqs Y}p^{-1}\sim \log\log T$ we get a Dirichlet polynomial of length $Y^{10V}=T^{10/\log\log T}$. This is now short and so the mean square is easily computed. Also, the exceptional set in this case is of small measure. 
 
 The final main input in the arguments of Harper and Radziwi\l\l--Soundararajan allows one to push the length of the prime sum up to $Y=T^\theta$, for some fixed $\theta>0$. This involves breaking the sum it into subsums of progressively smaller variance. A similar splitting has appeared in the work of Brun on the pure sieve (see Hooley's refinement \cite{Hoo}).  

This circle of ideas has been used in a wide variety of different contexts recently. These include; short interval maxima of the Riemann zeta function \cite{ABBRS, ABR, AOR}, unconditional bounds for the moments of zeta and $L$-functions \cite{Gao, HRS, HS}, value distribution of $L$-functions \cite{Das, HW,RSsel}, sign changes in Fourier coefficients of modular forms \cite{LR},  non-vanishing of central values of  $L$-functions \cite{DFL}  and equidistribution of lattice points on the sphere \cite{HR}. In our case, we use these ideas to prove the following.

\begin{prop}\label{P prop}Let $\epsilon>0$ and $k\in\mathbb{R}$ be fixed. Suppose $X\leqs \eta_k(\log T)^2(\log_2 T)^2$ with $\eta_k=\tfrac{1}{16k^2}-\epsilon$. Then
\[
\frac1T\int_T^{2T}|P_X(\tfrac12+it)|^{2k}dt\sim a(k) (e^\gamma \log X)^{k^2}
\]
where $a(k)$ is given by \eqref{a(k)}. Assuming RH, this holds for $X\leqs (\log T)^{\theta_k-\epsilon}$ with 
$\theta_k=2\sqrt{1+{1}/{2|k|}}.$
\end{prop}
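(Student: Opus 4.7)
The plan is to adapt the moment method developed by Soundararajan, Harper, and Radziwi\l\l--Soundararajan (and extended to matching lower bounds by Heap--Soundararajan) to the Euler product $P_X$ directly. Since $\log|P_X(\tfrac12+it)|$ is already an explicit Dirichlet polynomial over prime powers, no Soundararajan-style detour through $\log|\zeta|$ is needed to set up the argument, but a Harper-style decomposition of the prime sum becomes necessary once $X$ exceeds $(\log T)^{2-\epsilon}$, since a direct expansion of $|P_X|^{2k}$ would produce a Dirichlet polynomial too long to integrate against $[T,2T]$.

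First I would partition the primes up to $X$ into intervals $I_j=(X_{j-1},X_j]$ with $X_J=X$, chosen so that the partial variances $V_j=\sum_{p\in I_j}1/p$ form a suitably tempered sequence, and write
\[
|P_X(\tfrac12+it)|^{2k} = \exp(O(1))\prod_{j=0}^{J}\exp\bigl(2k\,\Re\,\mc{P}_j(t)\bigr), \qquad \mc{P}_j(t)=\sum_{p\in I_j}p^{-1/2-it}.
\]
On a ``good set'' $\mc{G}\subset[T,2T]$ where $|\mc{P}_j(t)|$ is at most a fixed multiple of $V_j$ for every $j$, truncate each factor $\exp(2k\,\Re\,\mc{P}_j)$ at degree $\asymp |k|V_j$ via \eqref{basic exp trunc}, replacing $|P_X|^{2k}$ by a Dirichlet polynomial of length $\prod_j X_j^{O(V_j)}$. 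By a suitable choice of the $I_j$ this length stays below $T^{1-\epsilon}$, and standard diagonal mean-value estimates together with the Dirichlet-convolution factorisation \eqref{a(k)} then recover the main term $a(k)(e^\gamma\log X)^{k^2}$.

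The upper bound is completed by controlling the contribution of $\mc{G}^c$ via H\"older against a cruder (say $4k$-th) moment, using Chebyshev with high moments $\int_T^{2T}|\mc{P}_j|^{2q}\,dt\ll q!\,V_j^q\,T$ (valid whenever $X_j^{2q}\leqs T^{1-\epsilon}$) to make $|\mc{G}^c|$ small. The matching lower bound follows the Heap--Soundararajan strategy: bound $|P_X|^{2k}$ below by $|M(t)|^2$ on $\mc{G}$ for a short Dirichlet polynomial $M$ mollifying $P_X^k$, then apply Cauchy--Schwarz to absorb the defect on $\mc{G}^c$.

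The main obstacle is the combined optimisation: the intervals $I_j$ must be chosen so that the truncated polynomial length stays $\ll T^{1-\epsilon}$ while the tail probabilities for $\mc{P}_j$ simultaneously keep the $\mc{G}^c$-contribution to the moment negligible. Balancing these constraints produces the unconditional range $X\leqs\eta_k(\log T)^2(\log_2 T)^2$ with $\eta_k=\tfrac{1}{16k^2}-\epsilon$. Under RH, Soundararajan's inequality (applied via the hybrid formula \eqref{hybrid formula} to transfer tail information about $\zeta$ to $P_X$) yields a sharper large-deviation bound on the relevant prime sums, enlarging the admissible range to $X\leqs(\log T)^{\theta_k-\epsilon}$ with $\theta_k=2\sqrt{1+1/(2|k|)}$; pinning down the precise $\theta_k$ requires a careful optimisation of Soundararajan's parameter against the decomposition.
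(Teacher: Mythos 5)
Your unconditional strategy is workable in spirit but overcomplicates the argument: a Harper-style decomposition of the prime sum into intervals $I_j$ is not needed here, and the paper does not use one for $P_X$ (it is used only for $Z_X$, whose moments cannot be computed directly). Because $\log|P_X|$ is already an explicit short prime sum, it suffices to restrict to a single good set $\mathcal{S}$ on which the \emph{whole} sum $\sum_{n\leqs X}\Lambda(n)/(n^{1/2+it}\log n)$ has real and imaginary parts $\leqs V_0=\log_2 T\log_3 T$. After truncating the exponential once at degree $W_0=20|k|V_0$, the resulting Dirichlet polynomial has length $X^{W_0}=\exp(O((\log_2 T)^2\log_3 T))=T^{o(1)}$, already short, for all $X$ in the stated range. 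The measure of the bad set is then controlled via a single high-moment Chebyshev estimate (the $2m$-th moment, $m\leqs \log T/\log X$), and the contribution of the bad set to the integral is killed by integrating $e^{2|k|V}$ against the tail bound $\mu(\mathcal{E}(V))\ll Te^{-(2|k|+\delta)V}$, rather than by H\"older. The unconditional constraint $X\leqs \eta_k(\log T)^2(\log_2 T)^2$ falls out because the truncation threshold at $V=V_{\max}\leqs (2+o(1))X^{1/2}/\log X$ forces $m\asymp X^{1/2}/(\log X)^2$, and this must stay $\leqs \log T/\log X$. Your ``mollifier lower bound'' step is also superfluous: since $P_X$ is approximated \emph{in both directions} on $\mathcal{S}$ by the short polynomial $D(t,k)$, the Montgomery--Vaughan mean value theorem gives the asymptotic directly, not merely an upper bound.

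The genuine gap is in your conditional (RH) argument. You propose to obtain the enlarged range $X\leqs (\log T)^{\theta_k-\epsilon}$ by applying Soundararajan's inequality and transferring tail information about $\zeta$ to $P_X$ via the hybrid formula. This does not work: Soundararajan's inequality bounds $\log|\zeta(\tfrac12+it)|$ from above by a prime sum, and the hybrid formula gives $\log|\zeta|=\log|P_X|+\log|Z_X|+O(1/\log X)$, but since $\log|Z_X|$ oscillates with no usable one-sided control (it is large and negative near zeros, large and positive between them), you cannot isolate a pointwise bound on $\log|P_X|=\Re\sum_{n\leqs X}\Lambda(n)/(n^{1/2+it}\log n)$ from these two ingredients. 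What actually makes the conditional range possible in the paper is a new pointwise estimate (Theorem \ref{prime sum thm}): on RH,
\[
\Big|\Re\sum_{n\leqs X}\frac{\Lambda(n)}{n^{1/2+it}\log n}\Big|\leqs (\tfrac12+o(1))\big(\log(\tfrac{X^{1/2}}{\log T})+4\log\log X\big)\frac{\log T}{\log\log T},
\]
proved by expressing the prime sum as a convolution integral against $S(y)$ over a window of length $Y\asymp X^{1/2}(\log X)^{2+\epsilon}/\log T$ (Lemma \ref{St lem}, a Perron--contour argument together with the Titchmarsh $\log\zeta$ estimates) and then inserting the Carneiro--Chandee--Milinovich bound $|S(t)|\leqs(\tfrac14+o(1))\log t/\log\log t$. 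This replaces the trivial bound $V_{\max}\lesssim 2X^{1/2}/\log X$ by $V_{\max}\lesssim \frac{\nu_k}{4}\log T$ where $\nu_k=\theta_k-2$, and the quadratic constraint $\kappa\nu(2+\nu)\leqs 2$ produces the exponent $\theta_k$. Without this estimate on the prime sum itself, there is no route to the conditional range, and your proposal never produces it.
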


\begin{prop}\label{Z asymp prop}
Suppose $X\leqs \tfrac{1}{10^4}(\log T)^2(\log_2 T)^2$. Then for $k=1,2$ we have  
\[
\frac{1}{T}\int_{T}^{2T}|Z_X(\tfrac{1}{2}+it)|^{2k}dt\sim g(k)\bigg(\frac{\log T}{e^\gamma\log X}\bigg)^{k^2}
\]
where $g(k)$ is given by \eqref{g(k)}. Assuming RH we may take $X\leqs (\log T)^{\sqrt{6}-\epsilon}$ when $k=1$ and $X\leqs (\log T)^{\sqrt{5}-\epsilon}$ when $k=2$.
\end{prop}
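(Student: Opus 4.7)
The plan is to use the hybrid formula \eqref{hybrid formula} to reduce the problem to a twisted moment of $\zeta$: since the error in \eqref{hybrid formula} is multiplicative $1+O(1/\log X)$, raising to the $2k$-th power gives
\[
\frac{1}{T}\int_T^{2T}|Z_X(\tfrac12+it)|^{2k}\,dt = (1+o(1))\,\frac{1}{T}\int_T^{2T}\frac{|\zeta(\tfrac12+it)|^{2k}}{|P_X(\tfrac12+it)|^{2k}}\,dt,
\]
so it suffices to produce an asymptotic for the right hand side when $k=1,2$.

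To handle $|P_X|^{-2k}$ I would apply the Harper / Radziwi\l\l--Soundararajan splitting-and-truncation technique outlined in the introduction. Writing $P_X(s)^{-k}=\exp(-kS_X(s))$ with $S_X(s)=\sum_{n\leq X}\Lambda(n)/(n^s\log n)$, decompose $S_X=\sum_{j=1}^J S_j$ into dyadic prime blocks and restrict to a good set $\mc G\subset[T,2T]$ on which each $|S_j(\tfrac12+it)|\leq V_j$, for thresholds $V_j$ taken to be a large constant multiple of the standard deviation of $S_j$. The truncation principle \eqref{basic exp trunc} then replaces $P_X^{-k}$ on $\mc G$ by a Dirichlet polynomial
\[
\mc M_k(s)=\prod_{j=1}^J\sum_{\ell=0}^{10V_j}\frac{(-kS_j(s))^\ell}{\ell!}
\]
of length $X^{O(\log\log X)}$, which in the stated ranges of $X$ is a tiny power of $T$.

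The core of the proof is then to evaluate
\[
\int_{\mc G}|\zeta(\tfrac12+it)|^{2k}|\mc M_k(\tfrac12+it)|^2\,dt
\]
as a twisted $2k$-th moment of $\zeta$. For $k=1$ this is a standard twisted second moment, available with asymptotic accuracy for Dirichlet polynomials of length essentially $T^{1-\epsilon}$ (e.g.\ via the approximate functional equation or Bettin--Chandee-type arguments). For $k=2$ one uses an asymptotic twisted fourth moment in the style of Hughes--Young or Bettin--Bui--Radziwi\l\l; on RH this covers Dirichlet polynomials of well more than the length of $\mc M_2$. Expanding $|\mc M_k|^2$ as a divisor sum and extracting the diagonal, the twist $|P_X|^{-2k}$ has the effect of cancelling the Euler factors at primes $p\leq X$ in the arithmetic factor $a(k)$, so the main term takes precisely the form $g(k)(\log T/(e^\gamma\log X))^{k^2}$. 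The contribution from the complement $[T,2T]\setminus\mc G$ is controlled by Cauchy--Schwarz combined with Chebyshev-type bounds on prime Dirichlet polynomials, exactly as in the proof of Proposition \ref{P prop}.

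I expect the $k=2$ case to be the main obstacle: assembling \emph{all} the main terms of the twisted fourth moment and combining them to produce $g(2)=1/12$ on the nose is delicate, and the RH threshold $X\leq(\log T)^{\sqrt5-\epsilon}$ must be compatible with both the range for Proposition \ref{P prop} and the available range for asymptotic twisted fourth moments. The fact that the precise thresholds $\sqrt 6$ and $\sqrt 5$ coincide with $\theta_1$ and $\theta_2$ from Proposition \ref{P prop} is natural, since the same splitting-and-truncation step governs both statements.
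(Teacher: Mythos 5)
Your high-level structure matches the paper: apply the hybrid formula to reduce to $\int|\zeta|^{2k}|P_X|^{-2k}$, restrict to a good set so that $P_X^{-k}$ can be replaced by a truncated exponential / short Dirichlet polynomial, then invoke an asymptotic twisted second or fourth moment of $\zeta$. Two remarks on the setup: because $X\ll(\log T)^{2+o(1)}$ the prime sum $\sum_{n\leq X}\Lambda(n)/(n^{1/2+it}\log n)$ has tiny variance, so the paper needs no dyadic block decomposition at all; a single good set $\mathcal{S}$ on which the whole sum is $\leq V_0=\log_2 T\log_3 T$ already yields a Dirichlet polynomial of length $X^{20kV_0}=T^{o(1)}$ with an exceptional set of measure $\ll Te^{-(2k+o(1))V_0}$. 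Your block decomposition would work but is superfluous here (it is used in the paper only for the Hadamard factor, in Proposition \ref{Z prop}).

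The genuine gap is the unconditional treatment of the exceptional set in the $k=2$ case. You propose to control the integral over $[T,2T]\setminus\mathcal{G}$ by ``Cauchy--Schwarz combined with Chebyshev-type bounds,'' but for $k=2$ this would require
\[
\frac{1}{T}\int_{\mathcal{E}}|\zeta(\tfrac12+it)|^4|P_X(\tfrac12+it)|^{-4}\,dt
\leq\Big(\frac1T\int_T^{2T}|\zeta(\tfrac12+it)|^{8}dt\Big)^{1/2}\Big(\frac1T\int_{\mathcal{E}}|P_X(\tfrac12+it)|^{-8}dt\Big)^{1/2},
\]
and the unconditional eighth moment of $\zeta$ is not available. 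The paper circumvents this by decomposing $\mathcal{E}$ into shells $\mathcal{E}_j$ according to the size of the prime sum, bounding $|P_X|^{-4}\leq e^{4V_{j+1}}$ on $\mathcal{E}_j$, multiplying by $(|\mathcal{P}|/V_j)^{2r_j}\geq1$, and then applying the \emph{unconditional} twisted fourth moment of $\zeta$ against a prime Dirichlet polynomial raised to the $r_j$-th power (Proposition 4 and formula (8) of \cite{HRS}), with $r_j$ chosen of size $V_j/\log V_j$. This is the step that makes the unconditional statement possible for $k=2$ and that forces the constant $10^{-4}$ (so that $r_j\leq(1/4-\epsilon)\log T/\log X$); without this idea your argument only yields the conditional version. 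A secondary point: after replacing $P_X^{-k}$ by $D(t,-k)$ on $\mathcal{S}$ one must re-extend the integral to $[T,2T]$ and control $\int_{\mathcal{E}}|\zeta|^{2k}|D(t,-k)|^2$, which for $k=2$ again cannot be done by Cauchy--Schwarz unconditionally; the paper handles it by the same shell decomposition using $|D(t,-k)|\leq\exp(|k\sum\Lambda(n)n^{-1/2-it}/\log n|)$. Your identification of the combinatorial assembly of the four-shift main terms as delicate is accurate; the paper resolves it via the contour-integral Lemma 2.5.1 of \cite{cfkrs}, which you would need to invoke to recover $g(2)=1/12$ exactly.
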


\begin{prop}\label{Z prop}
Assume RH and let $\epsilon,k>0$ be fixed. Suppose $X\leqs (\log T)^{\theta_k-\epsilon}$ with $\theta_k$ as above. Then 
\[
\frac{1}{T}\int_{T}^{2T}|Z_X(\tfrac{1}{2}+it)|^{2k}dt\asymp \bigg(\frac{\log T}{\log X}\bigg)^{k^2}.
\]
\end{prop}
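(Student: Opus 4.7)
The plan is to reduce the problem via the hybrid formula \eqref{hybrid formula}: on RH, and in the range $X\leq(\log T)^{\theta_k-\epsilon}$, one has $|Z_X(\tfrac12+it)|^{2k}\asymp|\zeta(\tfrac12+it)|^{2k}/|P_X(\tfrac12+it)|^{2k}$, so the proposition is equivalent to the estimate
\[
\int_T^{2T}|\zeta(\tfrac12+it)|^{2k}|P_X(\tfrac12+it)|^{-2k}\,dt \;\asymp\; T\left(\frac{\log T}{\log X}\right)^{k^2}.
\]
I would treat the upper and lower bounds separately, adapting the three techniques outlined in the introduction.

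For the upper bound, the starting point is Soundararajan's pointwise inequality on RH: for a parameter $Y\in[X,T]$, $\log|\zeta(\tfrac12+it)|$ is bounded above by a truncated von Mangoldt sum over $n\leq Y$ (with Riesz-type weights $\log(Y/n)/\log Y$) plus a penalty of order $\log T/\log Y$. Subtracting $\log|P_X(\tfrac12+it)|=\mathrm{Re}\sum_{n\leq X}\Lambda(n)/(n^{1/2+it}\log n)$, the $n\leq X$ contributions cancel to leading order, yielding the pointwise bound
\[
\log|Z_X(\tfrac12+it)| \leq \mathrm{Re}\,\mc{S}_{X,Y}(t) + \frac{\log T}{\log Y} + O(1),
\]
where $\mc{S}_{X,Y}(t)$ is a ``gap'' prime sum over $X<p\leq Y$ with coefficients of size $\log(Y/p)/\log Y$. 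Choosing $Y=T$ makes the penalty $O(1)$, and a short calculation shows $\mathrm{Var}\,\mc{S}_{X,T}\asymp\log(\log T/\log X)$. Applying the Harper--Radziwi\l\l--Soundararajan splitting scheme---partitioning $\mc{S}_{X,T}$ into subranges of primes with progressively smaller variances and applying the truncation \eqref{basic exp trunc} on subsets of $[T,2T]$ where each subsum is small---then gives the exponential moment bound
\[
\int_T^{2T}\exp\bigl(2k\,\mathrm{Re}\,\mc{S}_{X,T}(t)\bigr)\,dt \;\ll\; T\left(\frac{\log T}{\log X}\right)^{k^2}.
\]
The threshold $\theta_k=2\sqrt{1+1/(2|k|)}$ emerges from requiring the Dirichlet polynomial attached to the first (and longest) subrange of primes, raised to the power dictated by the exponential truncation, to be of length $\leq T^{1-\epsilon}$, so that the mean-value theorem for Dirichlet polynomials applies.

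For the lower bound, I would adapt the Radziwi\l\l--Soundararajan mollifier method. Let $M(s)$ be a Dirichlet polynomial supported on smooth numbers of size $\leq T^\delta$, whose coefficients approximate those of $(\zeta/P_X)^{k-1}(s)$ in an averaged sense. H\"older's inequality with conjugate exponents $k$ and $k/(k-1)$ gives
\[
\int_T^{2T}|\zeta/P_X|^{2k}\,dt \;\geq\; \frac{\bigl(\int_T^{2T}|\zeta/P_X|^2|M|^2\,dt\bigr)^k}{\bigl(\int_T^{2T}|M|^{2k/(k-1)}\,dt\bigr)^{k-1}}.
\]
The twisted second moment in the numerator is evaluated using the approximate functional equation for $\zeta$ together with the short Euler-product form of $P_X^{-1}$, via standard mean-value methods for Dirichlet polynomials. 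The denominator is a high-order mollifier moment, controllable by the techniques used in Proposition \ref{P prop}. Combining these estimates produces a lower bound of the expected order $T(\log T/\log X)^{k^2}$. For $0<k<1$, the lower bound follows more directly from the $k=1$ case via H\"older's inequality, so the mollifier argument is really needed only for $k\geq 1$.

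The main technical obstacle will be in the upper bound: executing the Harper splitting in this modified setting (with $\log|P_X|$ subtracted from Soundararajan's bound) so that the cancellation is clean and the variance accumulated across the splitting matches $k^2\log(\log T/\log X)$ exactly, while verifying that the constraints on the lengths of the resulting Dirichlet polynomials align precisely with the extended range $X\leq(\log T)^{\theta_k-\epsilon}$.
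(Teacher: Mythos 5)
Your overall strategy (reduce via the hybrid formula, use Soundararajan's pointwise inequality plus the Harper--Radziwi\l\l--Soundararajan splitting for the upper bound, and a mollifier/H\"older argument for the lower bound) matches the spirit of the paper's proof, and for $k\geqs 1$ your two-term H\"older inequality is exactly the one used there. However, there are two concrete gaps and one notable divergence in route.

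First, the upper bound. You claim the subtraction of $\log|P_X|$ from Soundararajan's inequality yields a pointwise bound
$\log|Z_X(\tfrac12+it)|\leqs\Re\,\mc{S}_{X,Y}(t)+\log T/\log Y+O(1)$
in which the primes $p\leqs X$ ``cancel to leading order.'' The cancellation is not clean: Soundararajan's sum carries the weight $p^{-1/\log Y}\log(Y/p)/\log Y$ while $\log|P_X|$ has weight $1$, so the residual is roughly
$-\tfrac{2}{\log Y}\Re\sum_{p\leqs X}\tfrac{\log p}{p^{1/2+it}}$,
which pointwise is of size $\asymp X^{1/2}/\log Y$. With $Y=T$ and $X$ in the range $(\log T)^{\theta_k-\epsilon}$ with $\theta_k>2$, this is a \emph{growing} power of $\log T$, not $O(1)$. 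The bound is salvageable --- the residual is a prime Dirichlet polynomial of negligible variance $\ll(\log X/\log T)^2$, so it can be folded into the splitting scheme as an extra, essentially harmless piece --- but as written the step is wrong. The paper avoids the issue entirely by \emph{not} subtracting pointwise: it applies Soundararajan's bound only to $\log|\zeta|$, keeps $|D(t,-k)|^2$ (the short Dirichlet-polynomial approximation to $|P_X|^{-2k}$) as a separate factor in the integrand, and lets the cancellation between the $p\leqs X$ primes happen at the Euler-product level when computing the mean value in Lemma~\ref{big int lem}. Both routes should ultimately work, but the paper's is cleaner precisely because it never needs to control the residual pointwise.

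Second, and more seriously, the lower bound for $0<k<1$. You assert it ``follows more directly from the $k=1$ case via H\"older's inequality.'' It does not. Writing $Q=\log T/\log X$, H\"older interpolation between the known lower bound $\int|Z_X|^2\gg TQ$ and an upper bound $\int|Z_X|^{2c}\ll TQ^{c^2}$ for some $c>1$ yields, after optimizing $c$, an exponent of at most $2k-1$, which is strictly less than $k^2$ for $0<k<1$ since $(1-k)^2>0$. Interpolation from $s=0$ and $s=1$ gives an \emph{upper} bound of $TQ^k$, not a lower bound. In fact the paper treats the regime $0<k\leqs 1$ as the harder case and uses a bespoke three-term H\"older inequality with exponents $\tfrac12$, $\tfrac{1-k}{2}$, $\tfrac{k}{2}$ applied to the quantity $\int_{\mc{S}}\zeta(\tfrac12+it)\mc{N}(t,k-1)\ol{\mc{N}(t,k)}|D(t,-k)|^2\,dt$, with separate estimates for each factor (Propositions~\ref{lower prop}, \ref{upper prop 1}, \ref{upper prop 2}). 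Your mollifier argument needs to be carried out for $0<k<1$ as well, not just $k\geqs 1$.

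Finally, a minor point: you attribute the threshold $\theta_k=2\sqrt{1+1/(2|k|)}$ to the Dirichlet-polynomial length constraint in the mean-value theorem. In the paper it arises instead from Lemma~\ref{E meas lem}, i.e.\ from the requirement that the Chebyshev-moment parameter $m=2\kappa V/\log V$ satisfy $m\leqs\log T/\log X$ at $V=V_{\max}$, where $V_{\max}$ is controlled via Theorem~\ref{prime sum thm}. In your gap-sum framework, where $P_X$ has been subtracted away, it is not obvious where this constraint on $X$ would even enter the upper bound; it would in any case reappear in the lower bound when you approximate $P_X^{-1}$ by a short Dirichlet polynomial.
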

\begin{rem}
The lower bound in Proposition \ref{Z prop} can be made unconditional provided $X\leqs\eta_k(\log T)^2(\log_2 T)^2 $. We say more on this in section \ref{lower 1}.
\end{rem}

Since $\int_T^{2T}|\zeta(\tfrac{1}{2}+it)|^{2k}dt$ is $\ll T(\log T)^{k^2}$ on RH \cite{Ha} and $\gg T(\log T)^{k^2}$ unconditionally \cite{HS}, 
Theorem \ref{main thm} follows from Propositions \ref{P prop} and \ref{Z prop} when combined with \eqref{hybrid formula}. Likewise, Theorem \ref{asymp thm} follows on combining Propositions \ref{P prop} and \ref{Z asymp prop} and \eqref{hybrid formula}.

Using the case of $P_X$ as an example, we describe how the range of $X$ can be increased past $(\log T)^{2-\epsilon}$. First of all, note that since 
\begin{equation}\label{sum bound}
\Big|\sum_{n\leqs X}\frac{\Lambda(n)}{n^{1/2+it}\log n}\Big|\leqs (1+o(1))\frac{2X^{1/2}}{\log X},
\end{equation}
we can approximate $P_X(1/2+it)^k$ with a Dirichlet polynomial of length $X^{20|k|X^{1/2}/\log X}$ by using \eqref{basic exp trunc} to truncate the exponential.
If $X\ll(\log T)^{2-\epsilon}$ then this is $T^{o(1)}$ and so we have a short Dirichlet polynomial. Note this holds for \emph{all} $t\in[T,2T]$ since the bound \eqref{sum bound} is pointwise. G--H--K computed a Dirichlet polynomial approximation in a slightly different way, although in order for it to be short they required the same bound on  $X$, perhaps unsurprisingly.

If $X$ is larger, then in order to have a short Dirichlet polynomial we must restrict to a subset of $[T,2T]$ and in this case we need good bounds on the exceptional set. Typically, one would expect Gaussian bounds of the shape
\begin{equation}\label{gauss}
\frac{1}{T}\mu\Big(\Big\{t\in[T,2T]:\Big|\Re\sum_{n\leqs X}\frac{\Lambda(n)}{n^{1/2+it}\log n}\Big|\geqs V\Big\}\Big)\ll \exp\Big(-\frac{V^2}{\log\log T}\Big)
\end{equation}
in a wide range of $V$. In practice we are limited to $V\ll \sqrt{(\log T)(\log_2 T)/\log X}$ which may be much smaller than the maximum $2X^{1/2}/\log X$. For the remaining range of $V$ one must settle for weaker bounds. For example, in \cite{S} it is shown that the tails of $\log|\zeta(1/2+it)|$ can be bounded  by $e^{-V\log V}$ when $V\gg \log_2T \log_3 T$. We can show that the tails of our sum satisfy the same bound in the range $\log_2 T\log_3 T\leqs V\leqs 2X^{1/2}/\log X$ provided $X\ll (\log T)^2$. However, for our purposes the weaker bound of $e^{-AV}$ with large $A$ is sufficient and this affords us slightly more room in the size of $X$.  

Another avenue for improvement is to reduce the trivial bound in \eqref{sum bound}. This becomes a manageable task under RH and thus we are able to make further gains in the size of $X$ under this assumption. We shall prove the following. 

\begin{thm}\label{prime sum thm}Assume RH. Then for large $t\in [T,2T]$ and $ 2(\log T)^2\leqs X\leqs T$ we have 
\[
\Big|\Re\sum_{n\leqs X}\frac{\Lambda(n)}{n^{1/2+it}\log n}\Big|
\leqs 
(\tfrac{1}{2}+o(1))\big(\log(\tfrac{X^{1/2}}{\log T})+4\log\log X\big)\frac{\log T}{\log\log T}.
\]
For the imaginary part we can replace the factor $\tfrac{1}{2}+o(1)$ by $\tfrac{1}{\pi}+o(1)$.
%and
%\[
%\Big|\Im\sum_{n\leqs X}\frac{\Lambda(n)}{n^{1/2+it}\log n}\Big|
%\leqs 
%(\tfrac{1}{\pi}+o(1))\big(\log(\tfrac{X^{1/2}}{\log T})+4\log\log X\big)\frac{\log T}{\log\log T}
%\]
\end{thm}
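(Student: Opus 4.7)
The plan is to start from the Gonek--Hughes--Keating hybrid formula \eqref{hybrid formula}, which under RH (so $\rho = 1/2+i\gamma$) yields
\[
\sum_{n\leqs X}\frac{\Lambda(n)}{n^{1/2+it}\log n} = \log \zeta(\tfrac{1}{2}+it) + \sum_\gamma U\bigl(i(t-\gamma)\log X\bigr) + O(1/\log X),
\]
since the left side equals $\log P_X(\tfrac{1}{2}+it)$ and $\log Z_X(\tfrac{1}{2}+it) = -\sum_\gamma U(i(t-\gamma)\log X)$ on RH. The task thus reduces to bounding the right-hand side. Individually the two pieces can be very large near zeros of $\zeta$, but their singularities cancel: using the expansion $\Re U(iy) = -\gamma_E - \log|y| + O(y^2)$ for $|y|\leqs 1$ (where $\gamma_E$ is Euler's constant), the near-zero behaviour of $\log|\zeta|$ (going like $\log|t-\gamma|$) and of $U(i(t-\gamma)\log X)$ (going like $-\log|t-\gamma|-\log\log X$) cancel to leave an $O(\log\log X)$ remainder per close zero.

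To make this precise I would invoke the Hadamard factorisation of $\xi$ to write $\log\zeta(\tfrac{1}{2}+it)$ as a convergent sum over zeros, plus a smooth piece of size $O(\log T)$ after cancellation with the $\Gamma$-factors. Combining with the $U$-sum gives
\[
\log|P_X(\tfrac{1}{2}+it)| = \sum_\gamma \bigl[\log|t-\gamma| + \Re U(i(t-\gamma)\log X)\bigr] + O(\log T),
\]
where each summand is $O(\log\log X)$ when $|t-\gamma|\leqs 1/\log X$ and decays like $O(1/|(t-\gamma)\log X|)$ beyond, by the asymptotics of $E_1$. Under RH, the Riemann--von Mangoldt formula together with the Goldston--Gonek bound on $S(t)$ yields at most $(1/(2\pi)+o(1))\log T/\log\log T$ zeros within $1/\log X$ of $t$. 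Multiplying by the $O(\log\log X)$ bound per close zero, and handling the mid-range and far-zero contributions by dyadic summation against the zero-counting measure, produces the claimed real-part bound.

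For the imaginary part, the analogous analysis proceeds with $\Im U$, whose small-$|y|$ behaviour is approximately $-(\pi/2)\mathrm{sgn}(y)$ rather than logarithmic. The corresponding cancellation is now with $\arg \zeta(\tfrac{1}{2}+it) = \pi S(t)$ in place of $\log|\zeta|$, and the different leading coefficient in this kernel accounts for the factor $1/\pi$ replacing $1/2$ in the final bound.

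The principal obstacle is the precise tracking of constants through the cancellation in the transition region $|(t-\gamma)\log X|\sim 1$, where neither the logarithmic expansion of $\Re U$ at the origin nor its asymptotic decay is dominant. There one must work with the full kernel and integrate it against a smoothed form of the zero-counting measure to recover the sharp constants $1/2$ and $1/\pi$.
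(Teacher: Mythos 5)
Your starting identity from the hybrid formula is correct: on RH, $\log P_X(\tfrac12+it)=\sum_{n\leqs X}\Lambda(n)/(n^{1/2+it}\log n)$ and $\log Z_X(\tfrac12+it)=-\sum_\gamma U(i(t-\gamma)\log X)$, so the prime sum is $\log\zeta(\tfrac12+it)+\sum_\gamma U(i(t-\gamma)\log X)+O(1/\log X)$. But the paper does not go this route, and I think there are serious gaps in yours.

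The paper's proof applies Perron's formula to write the prime sum as $\frac{1}{2\pi i}\int\log\zeta(z+\tfrac12+it)X^z\,dz/z$, then uses Titchmarsh's identity (14.10.5) to convert $\log\zeta$ on that contour into $i\int S(y)\,dy/(z+i(t-y))$, so that the whole thing collapses (Lemma \ref{St lem}) to the clean convolution $\int_{t-Y}^{t+Y}S(y)\frac{1-X^{-i(t-y)}}{t-y}\,dy$ plus a controllable error. With $Y=X^{1/2}(\log X)^{2+\epsilon}/\log T$ and the CCM bound $|S|\leqs(\tfrac14+o(1))\log T/\log\log T$, the whole bound reduces to evaluating $\int_{-Y\log X}^{Y\log X}(1-\cos y)/|y|\,dy\leqs 2\log(Y\log X)+O(1)$. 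The factor $\log(X^{1/2}/\log T)$ in the theorem is exactly $\log(Y\log X)$: it comes from the size of the divergent kernel integral, i.e.\ from the whole range $|t-y|\leqs Y$, \emph{not} from a count of zeros within $1/\log X$ of $t$.

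Here is where your proposed argument breaks down. You claim that each summand $\log|t-\gamma|+\Re U(i(t-\gamma)\log X)$ is $O(\log\log X)$ for close zeros and \emph{decays like} $O\big(1/(|t-\gamma|\log X)\big)$ beyond. This is false: $\Re U(i(t-\gamma)\log X)$ does decay (with oscillation), but $\log|t-\gamma|$ does not, and the sum $\sum_\gamma\log|t-\gamma|$ is not even conditionally convergent without subtracting the correct compensating term from the Hadamard factorisation of $\xi$. If you are more careful with the compensated Hadamard kernel, the mid- and far-range terms do not shrink nearly as fast as you claim, and they in fact dominate. Indeed, following your reasoning to its conclusion — at most $O(\log T/\log\log T)$ close zeros, each contributing $O(\log\log X)$, everything else negligible — you would get a bound of order $\log T\cdot\log\log X/\log\log T$, which for $X=T^\alpha$ is $\asymp\log T$, far smaller than the theorem's bound $\asymp(\log T)^2/\log\log T$. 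That ought to be a red flag that a large contribution is being dropped; it is the part you dismiss as "handled by dyadic summation against the zero-counting measure." That is precisely the hard part, and it is where the $\log(X^{1/2}/\log T)$ has to come from. Your proposal does not produce this factor at all. (Two smaller slips: the bound you attribute to Goldston--Gonek, after differencing, should give $(\tfrac12+o(1))\log T/\log\log T$ zeros with the CCM constant, not $(\tfrac1{2\pi}+o(1))$; and the hybrid formula's $O(1/\log X)$ is not uniform when $t$ is very close to a zero, so the identity you begin with needs a caveat in a neighbourhood of the ordinates.)
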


The factor of $1/2+o(1)$ here is related to the function $S(t)$ and can be read as $2c$ where $c$ is a permissible constant in the bound $S(t)\leqs (c+o(1))\log t/\log\log t$. The current best is due to Carneiro--Chandee--Milinovich \cite{CCM} who give $c=1/4$. Our approach to Theorem \ref{prime sum thm} is to relate the sum with $S(t)$ via contour integrals and then input these bounds.  We have not made attempts to further optimise this argument but it would be interesting to see if one could  use the extremal function machinery of Carneiro et al.$\!$ in a more direct way.  

Regarding further improvements in the size of $X$, if the conjectural bound $S(t)\ll \sqrt{\log t\log_2 t}$ of Farmer--Gonek--Hughes \cite{FGH} holds, then one could take $X\leqs \exp($ $C(\log t)^{1/4})$. Also, assuming that the bounds for the exceptional set in \eqref{gauss}, or some minor variant of this\footnote{In fact, anything of the form $e^{-AV}$ with large $A$ would be sufficient.}, hold in the full range of $V$ for a given $X$, then our arguments can reproduce Theorems \ref{main thm} and \ref{asymp thm} for $X$ as large as $T^{1/C\log\log T}$. This supports the view of G--H--K that the splitting conjecture may hold as long as $X=o(T)$.

The paper is organised as follows. We first prove Theorem \ref{prime sum thm} in section \ref{prime sec} and then the asymptotic results of Propositions \ref{P prop} and \ref{Z asymp prop} in sections \ref{Euler sec} and \ref{Hadamard sec}, respectively. In section \ref{tools sec} we describe some tools for later use. Then in section \ref{upper sec} we prove the upper bound of Proposition \ref{Z prop} and in sections \ref{lower 1} and \ref{lower 2} we prove the lower bound in the cases $0\leqs k\leqs 1$ and $k\geqs 1$, respectively. 

\begin{ack}The author would like to thank Jing Zhao and Junxian Li for their comments on an early draft of this paper and Chris Hughes for some clarifying remarks.
\end{ack}

%%%%%%%%%%%%%%%      PRIME SUM        %%%%%%%%%%%%%%%

\section{Bounds for prime sums: Proof of Theorem \ref{prime sum thm}}\label{prime sec}

We first give a lemma which relates our prime sum to $S(t)$. This shares some similarities with previous convolution formulas from the literature \cite{Sel, Tsang}.

\begin{lem}\label{St lem}
Assume RH. For large $t\in[T,2T]$ and $2\leqs X\leqs T$, $Y\leqs T/2$, we have
\[
\sum_{n\leqs X}\frac{\Lambda(n)}{n^{1/2+it}\log n}
=
\int_{t-Y}^{t+Y}S(y)\frac{1-X^{-i(t-y)}}{t-y}dy+E(X,Y,T)
\]
where 
\begin{equation*}
E(X,Y,T)
\ll
\frac{X^{1/2}(\log X+\tfrac{\log_3 T}{\log X})}{Y}
+
\frac{\log T}{\log_2 T}\Big(\frac{1}{Y^{1/2}}
+
\frac{X^{{C}/{\log_2 T}}}{Y}\mathds{1}_{X>(\log T)^A}\Big)
\end{equation*}
and $A$ is a large constant.
\end{lem}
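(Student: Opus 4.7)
The plan is to express the prime sum as a contour integral of $\log\zeta$ via Perron's formula, shift the contour onto the critical line, and isolate $S(y)$ by exchanging the real and imaginary contributions of $\log\zeta$ through the Hilbert-transform/Cauchy-type relation coming from its analyticity in the right half-plane.

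Starting from $\log\zeta(s)=\sum_n\Lambda(n)/(n^s\log n)$ for $\Re s>1$, a truncated Perron formula with the smoothed kernel $(X^w-1)/w$ (which is entire) gives
\[
\sum_{n\leqs X}\frac{\Lambda(n)}{n^{1/2+it}\log n}=\frac{1}{2\pi i}\int_{c-iY}^{c+iY}\log\zeta(\tfrac{1}{2}+it+w)\frac{X^w-1}{w}\,dw+E_{\mathrm{Pe}},
\]
with $c$ slightly above $1/2$. Shifting the contour to $\Re w=0$ picks up no residues on RH (the integrand is holomorphic in the strip), so after parametrising $w=iv$ and substituting $y=t+v$ one obtains an integral over $[t-Y,t+Y]$ with kernel $(1-X^{-i(t-y)})/(t-y)$, plus horizontal-segment errors at $\Im w=\pm Y$. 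Writing $\log\zeta(\tfrac{1}{2}+iy)=\log|\zeta(\tfrac{1}{2}+iy)|+i\pi S(y)$ splits the result into a $\log|\zeta|$-contribution and an $S$-contribution; one then uses the boundary Hilbert-transform relation $\pi S=-\mathcal{H}[\log|\zeta|]$ (arising from analyticity of $\log\zeta$ in $\Re w>0$), combined with the Fourier-level identity $\mathcal{H}[(\cos uL-1)/u]=\sin(uL)/u$ where $L=\log X$, to merge the two pieces into the single main term $\int_{t-Y}^{t+Y}S(y)(1-X^{-i(t-y)})/(t-y)\,dy$.

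The three pieces of $E(X,Y,T)$ then arise as follows. The Perron truncation error, after choosing $c$ optimally and invoking the pointwise bound \eqref{sum bound}, contributes the first term $X^{1/2}(\log X+\log_3 T/\log X)/Y$. The truncation of the Hilbert identity from $\mathbb{R}$ to $[t-Y,t+Y]$ produces a tail error controlled by Cauchy--Schwarz, combining a Selberg-style $L^2$ mean-value bound for $\log|\zeta(\tfrac{1}{2}+iy)|$ with the Soundararajan-type pointwise bound $|\log|\zeta(\tfrac{1}{2}+iy)||\ll\log T/\log_2 T$; this produces the $(\log T/\log_2 T)\,Y^{-1/2}$ piece. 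The horizontal segments at $\Im w=\pm Y$ are bounded by a convexity estimate for $\log\zeta$ across $0\leqs\Re w\leqs c$, using the same RH bound on the critical line; this yields the $X^{C/\log_2 T}/Y\cdot\mathds{1}_{X>(\log T)^A}$ piece, which is only active once $X$ is sufficiently large.

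The main obstacle will be executing the Hilbert-transform reduction rigorously on the truncated interval: one must carefully handle the integrable logarithmic singularities of $\log\zeta$ at the ordinates of Riemann zeros as the contour reaches the critical line, and tightly control the tails of the $\log|\zeta|$ contribution so that they fit inside the $(\log T/\log_2 T)/Y^{1/2}$ term. The horizontal convexity step is also delicate, and will rely on the sort of sharp RH inputs employed in the proof of Theorem \ref{prime sum thm}.
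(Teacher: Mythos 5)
Your scaffolding --- Perron's formula, a contour shift, then convert $\log\zeta$ into an $S$-integral --- is the same as the paper's, but the conversion step is executed by a genuinely different route, and as written that route has gaps that the paper's method is specifically designed to avoid. The paper shifts only to $\Re z = 1/\log X$, strictly to the right of the critical line, and there applies Titchmarsh's formula (14.10.5) to express $\log\zeta(z+\tfrac12+it)$ as $i\int S(y)/(z+i(t-y))\,dy + O(\log T/T)$; a second contour shift in the inner $z$-integral then produces the kernel $(1-X^{-i(t-y)})/(t-y)$. It never decomposes $\log\zeta$ into real and imaginary parts and never reaches the critical line, so Hilbert transforms and boundary singularities never appear. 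By contrast, shifting to $\Re w=0$, as you propose, meets the logarithmic branch points of $\log\zeta$ at $w=i(\gamma-t)$ head on --- your assertion that ``the integrand is holomorphic in the strip'' is only true of the open strip, and reaching the boundary requires a limiting argument you do not supply --- and it then forces you through the Hilbert-transform reduction that the paper sidesteps entirely.

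Two of the steps you sketch after that are unworkable as stated. You propose controlling the tails of the Hilbert identity with ``the Soundararajan-type pointwise bound $|\log|\zeta(\tfrac12+iy)||\ll\log T/\log_2 T$'': no such two-sided bound exists, even on RH, since $\log|\zeta(\tfrac12+iy)|\to -\infty$ at every zero; Soundararajan's estimate bounds $\log|\zeta|$ from above only. This is precisely why the paper works with $S(y)$, which does satisfy the two-sided bound $|S(\tau)|\ll\log\tau/\log_2\tau$, and handles the delicate region $|t-y|\approx Y$ by integration by parts using $S_1(\tau)\ll\log\tau/(\log_2\tau)^2$. You also attribute the $\log_3 T/\log X$ part of the first error term to the Perron truncation, but Perron contributes only $O(X^{1/2}\log X/Y)$; the $\log_3 T$ factors come from bounding $\log\zeta$ on the horizontal segments across $\tfrac12<\sigma\leqs 1$ via Titchmarsh (14.14.5), (14.14(B)) and the bound $\log\zeta(\sigma+i\tau)\ll\log_3\tau$ for $\sigma\geqs 1$. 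The underlying shortcut worth seeing is that a Fourier/Parseval computation shows the full-line integral $\int_{\mathbb{R}} S(y)(1-X^{-i(t-y)})(t-y)^{-1}\,dy$ already formally equals the prime sum, so the whole Hilbert-transform detour that converts $\log|\zeta|$ back into $S$ is an avoidable complication --- Titchmarsh's (14.10.5) is exactly the device that lets the paper bypass it.
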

\begin{proof}
By Perron's formula (Lemma 3.19, \cite{T}) we have 
\[
\sum_{n\leqs X}\frac{\Lambda(n)}{n^{1/2+it}\log n}
=
\frac{1}{2\pi i}\int_{1/2+1/\log X-iY}^{1/2+1/\log X+iY}\log \zeta(z+\tfrac{1}{2}+it)X^z\frac{dz}{z}+O\Big(\frac{X^{1/2}\log X}{Y}\Big).
\]
We shift the contour to the line with real part $\Re(z)=1/\log X$. Restricting $Y\leqs T/2$ we don't encounter any poles. In the region $1/2+C/\log\log \tau\leqs \sigma\leqs 1$, $\tau\gg 1$, we have  
\[
\log \zeta(\sigma+i\tau)\ll \frac{(\log \tau)^{2-2\sigma}\log_3\tau}{\log_2 \tau}.
\]
This follows from (14.14.5) of \cite{T}, the Phragmen--Lindel\"of principle and the bound $\log \zeta(\sigma+i\tau)\ll \log _3\tau$, $\sigma\geqs 1$ (the latter can be deduced from the proof of Theorem 14.8 of \cite{T}). From Theorem 14.14 (B) of \cite{T} we have 
\[
\log\zeta(\sigma+i\tau)\ll \frac{\log \tau}{\log_2\tau}\log\Big(\frac{2}{(\sigma-1/2)\log_2\tau}\Big),\qquad \tfrac{1}{2}<\sigma\leqs \tfrac12+C/\log_2\tau.
\]
Therefore, the horizontal contours contribute
\begin{equation}\label{horizontal}
\ll
\frac{X^{{C}/{\log_2 T}}\log(\tfrac{\log X}{\log_2 T})\log T}{Y\log_2 T\log X}\mathds{1}_{X>(\log T)^A}
+
\frac{X^{1/2}\log_3 T}{Y\log_2 T\max(\log(\tfrac{X^{1/2}}{(\log T)^2}),1)}
+
\frac{X^{1/2}\log_3 T}{Y\log X}.
\end{equation}

By formula (14.10.5) of \cite{T} (see also (14.12.4) there) we have 
\[
\log\zeta(z+\tfrac{1}{2}+it)
=
i\int_{t/2-Y}^{2t+Y}\frac{S(y)}{z+i(t-y)}dy+O\Big(\frac{\log T}{T}\Big).
\]
This implies
\begin{equation}\label{log int}
\frac{1}{2\pi i}\int_{1/\log X-iY}^{1/\log X+iY}\log\zeta(z+\tfrac{1}{2}+it)X^z\frac{dz}{z}
=
i\int_{t/2-Y}^{2t+Y}S(y)I(t-y)dy+O\Big(\frac{(\log T)^2}{T}\Big).
\end{equation}
where 
\[
I(t-y)
=
\frac{1}{2\pi i}\int_{1/\log X-iY}^{1/\log X+iY}\frac{X^z}{z(z+i(t-y))}dz.
\]
A trivial estimate gives 
\begin{equation}\label{I triv} 
I(t-y)\ll \int_{-Y}^{Y}\frac{dx}{(1/\log X+|x|)(1/\log X+|x+t-y|)}\ll \log X.
\end{equation}
On the other hand, shifting the contour to the left we find 
\begin{equation}\label{I res}
I(t-y)
=
\frac{1-X^{-i(t-y)}}{i(t-y)}\mathds{1}_{|t-y|\leqs Y}
+
\frac{1}{i(t-y)}\mathds{1}_{|t-y|> Y}
+
O\Big(\frac{1}{Y|Y-|t-y||\log X}\Big).
\end{equation}

For a given $\delta>0$ to be chosen, we find by \eqref{I triv} that
\[
\int_{|t-y\pm Y|\leqs \delta}S(y)I(t-y)dy\ll \delta \log X\log T/\log_2 T
\]
since $S(\tau)\ll \log \tau/\log_2 \tau$. In the integral over the remaining region, the error term of \eqref{I res} contributes
\[
\ll \frac{\log T}{\delta Y\log X\log_2 T}+\frac{\log Y\log T}{Y\log X\log_2 T}
\]
after considering the regions $\delta<|Y-|t-y||\leqs 1$ and $1<|Y-|t-y||$ separately. Therefore, on choosing $\delta=1/(Y^{1/2}\log X)$ we find that the integral on the right of \eqref{log int}
is 
\begin{multline*}
i\int_{\substack{t/2-Y\leqs y\leqs 2t+Y\\ |t-y\pm Y|> \delta}}S(y)
\bigg(
\frac{1-X^{-i(t-y)}}{i(t-y)}\mathds{1}_{|t-y|\leqs Y}
+
\frac{1}{i(t-y)}\mathds{1}_{|t-y|> Y}
\bigg)dy
+
O\Big(
\frac{\log T}{Y^{1/2}\log_2 T}\Big).
\end{multline*}
Integrating by parts along with the bound $S_1(\tau)\ll \log \tau/(\log_2 \tau)^2$, we find that the second term in this integral is $\ll \log T/(Y(\log_2 T)^2)$ which can be absorbed into the error term immediately above. The range of integration of the first term can be extended to $|t-y|\leqs Y$ at the cost of an error $\ll \log T/(Y\log_2T)$. Combining this in \eqref{log int} along with the error terms of \eqref{horizontal} the result follows.

\end{proof}

\begin{proof}[Proof of Theorem \ref{prime sum thm}]
We apply Lemma \ref{St lem} with 
\[
Y=\frac{X^{1/2}(\log X)^{2+\epsilon}}{\log T}.
\]
With this choice we have $E(X,Y,T)=o(\log T/\log\log T)$ since $\log X\gg\log_2 T$. Therefore, on taking real parts in Lemma \ref{St lem} we find
\[
\Re\sum_{n\leqs X}\frac{\Lambda(n)}{n^{1/2+it}\log n}
=
-\int_{-Y}^{Y}S(t+y)\frac{1-\cos (y\log X)}{y}dy+o(\log T/\log_2 T).
\]
From \cite{CCM} we have
\[
|S(\tau)|\leqs (1+o(1))\frac{1}{4}\frac{\log\tau}{\log\log\tau}
\]
 for large $\tau$ and so
 \[
 \Big|
 \Re\sum_{n\leqs X}\frac{\Lambda(n)}{n^{1/2+it}\log n}
\Big|
\leqs (1+o(1))\frac{\log T}{4\log\log T}\int_{-Y\log X}^{Y\log X} \frac{1-\cos y}{|y|}dy.
 \]
The integral here is 
\begin{align*}
2\int_{\pi}^{Y\log X}  \frac{1-\cos y}{y}dy+O(1)
\leqs  &
2\sum_{n=1}^{Y\log X}\frac{1}{\pi n}\int_{\pi n}^{\pi(n+1)}(1-\cos y)dy+O(1)
\\
= &
2 \log(Y\log X)+O(1) .
\end{align*}
Thus, we acquire 
\[
 \Big|
 \Re\sum_{n\leqs X}\frac{\Lambda(n)}{n^{1/2+it}\log n}
\Big|
\leqs
(1+o(1))\frac{\log(Y\log X)\log T}{2\log\log T}
\]
and  the result follows on inputting our choice of $Y$. 

In the case of the imaginary part, we acquire the integral $\int_{-Y}^Y|\sin(y\log X)/y|dy$ which, on following the same argument, is $\leqs (4/\pi)\log(Y\log X)+O(1)$.
\end{proof}

From the proof we see that the factor $\log(X^{1/2}/\log T)$ comes from the divergent integral $\int_{-Y}^Y|1-X^{-iy}|/|y|dy$. One may then wonder if smoothing would help here, that is, if the problem could be modified so that we consider a smoothed sum instead; say 
\[
\sum_{n\leqs X}\frac{\Lambda(n)}{n^{1/2+it}\log n}\Big(1-\frac{\log n}{\log X}\Big).
\] 
The imaginary part responds well to this procedure and the above proof recovers the formulas of Selberg \cite{Sel} and Tsang \cite{Tsang} which have the convergent integrand  $\sin^2(y\log X)/y^2$. Unfortunately, for the real part the integrand is again of the form $\approx 1/|y|$ owing to large negative values of $\Re\log\zeta(1/2+it)$ (cf. Lemma 5 of \cite{Tsang}).

%%%%%%%%%%%%%%%%%%% EULER PROD %%%%%%%%%%%%%

\section{Moments of the Euler product: Proof of Proposition \ref{P prop}}\label{Euler sec}

Recall that  
\[
P_X(s)=\exp\Big(\sum_{n\leqs X}\frac{\Lambda(n)}{n^s\log n}\Big).
\]
Here and throughout the paper we consider the following subsets of $[T,2T]$ on which the sum in the exponential attains typical values. For $V\geqs 0$ set
\begin{equation*}\label{S}
\qquad\qquad\mc{S}_{\Re}(V)=\Big\{t\in[T,2T]:\Big|\Re\sum_{n\leqs X}\frac{\Lambda(n)}{n^{1/2+it}\log n}\Big|\leqs V\Big\},  
\end{equation*}
and define the set relating to the imaginary part $\mc{S}_{\Im}(V)$ similarly. Let
\[
\mc{S}(V)
=
\mc{S}_{\Re}(V)\cap \mc{S}_{\Im}(V)
\] 
and denote
\[
V_0=\log_2 T\log_3 T,\qquad\,\,\,\,\,\,\mc{S}=\mc{S}(V_0).
\]
Define the complementary sets by 
\[
\mc{E}_{\Re}(V)=\overline{\mc{S}_{\Re}(V)},\,\,
\mc{E}_{\Im}(V)=\overline{\mc{S}_{\Im}(V)},\,\,
\mc{E}(V)=\overline{\mc{S}(V)},\,\,
\mc{E}=\overline{\mc{S}}\,\,
\]
where $\overline{A}=[T,2T]\backslash A$ for a given set $A$. 
Also, let 
\[
V_{\max}
=
\max_{t\in[T,2T]}\bigg(\Big|\Re\sum_{n\leqs X}\frac{\Lambda(n)}{n^{1/2+it}\log n}\Big|,
\Big|\Im\sum_{n\leqs X}\frac{\Lambda(n)}{n^{1/2+it}\log n}\Big|\bigg).
\]  
Note that unconditionally $V_{\max}\leqs (2+o(1))X^{1/2}/\log X$ and that on the Riemann hypothesis  $V_{\max}\leqs (1/2+o(1))(\log(X^{1/2}/\log T)+O(\log_2 X))\log T/\log_2 T$ by Theorem \ref{prime sum thm}. The reason we work with the real and imaginary parts (as opposed to working with the modulus directly) is so that we have slightly better  conditional bounds for $V_{\max}$ whilst still maintaining control over the modulus (which is important for Lemma \ref{P lem} below). 
This gives better exponents for our logarithms in the conditional results but entails slightly more work.

To the estimate the measure of the complementary sets we use the following.  
\begin{lem}[\cite{S}]\label{sound lem 2}Let $T$ be large and let $2\leqs x\leqs T$. Let $m$ be a natural number such that $x^{m}\leqs T/\log T$. Then for any complex numbers $a(p)$ we have 
\[
\frac{1}{T}\int_{T}^{2T}\bigg|\sum_{p\leqs x}\frac{a(p)}{p^{1/2+it}}\bigg|^{2m}dt\ll m!\bigg(\sum_{p\leqs x}\frac{|a(p)|^2}{p}\bigg)^m.
\]
\end{lem}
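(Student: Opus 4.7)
\medskip

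The plan is to expand the $2m$-th power as a sum over ordered tuples of primes, carry out the $t$-integral, and show that only the diagonal survives. Writing
\[
A(t)=\sum_{p\leqs x}\frac{a(p)}{p^{1/2+it}},
\]
I would expand
\[
|A(t)|^{2m}
=
\sum_{p_1,\dots,p_m\leqs x}\sum_{q_1,\dots,q_m\leqs x}\frac{a(p_1)\cdots a(p_m)\overline{a(q_1)\cdots a(q_m)}}{\sqrt{p_1\cdots p_m q_1\cdots q_m}}\Big(\frac{q_1\cdots q_m}{p_1\cdots p_m}\Big)^{it},
\]
integrate term by term, and split according to whether $N:=\prod p_i$ equals $M:=\prod q_i$ or not.

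For the diagonal terms $N=M$, unique factorisation forces $(q_1,\dots,q_m)$ to be a permutation of $(p_1,\dots,p_m)$, whence $\prod a(p_i)\overline{a(q_i)}=\prod|a(p_i)|^2$. For a fixed multiset of primes with multiplicities $k_1,k_2,\dots$ summing to $m$, the number of matching permutations is $m!/\prod k_j!\leqs m!$. Summing freely over the $p_i$ then gives
\[
\text{diagonal}\leqs m!\sum_{p_1,\dots,p_m\leqs x}\frac{|a(p_1)|^2\cdots|a(p_m)|^2}{p_1\cdots p_m}=m!\Big(\sum_{p\leqs x}\frac{|a(p)|^2}{p}\Big)^m,
\]
which is the target bound.

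For the off-diagonal $N\neq M$, the standard estimate $\int_T^{2T}(M/N)^{it}dt\ll 1/|\log(M/N)|$ applies. Since $N,M$ are positive integers bounded by $x^m\leqs T/\log T$, one has $|\log(M/N)|\gg 1/\max(N,M)\gg \log T/T$, and so each oscillatory integral is $\ll T/\log T$. Summing the absolute values of the coefficients and factorising the resulting double sum produces a bound of the form
\[
\frac{1}{\log T}\Big(\sum_{p\leqs x}\frac{|a(p)|}{\sqrt{p}}\Big)^{2m},
\]
which by Cauchy--Schwarz is $\ll (\pi(x)/\log T)^m (\sum|a(p)|^2/p)^m$. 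The hypothesis $x^m\leqs T/\log T$ keeps this factor small compared to $m!$, so the off-diagonal is absorbed into the diagonal estimate.

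The main obstacle is the clean handling of the off-diagonal: one must exploit the integrality and size constraint $N,M\leqs T/\log T$ together with the oscillation bound, and then factorise the resulting sum so that it is controlled by the diagonal quantity. The permutation combinatorics in the diagonal step, where the $m!/\prod k_j!$ bound must be applied uniformly, is the only other point requiring care.
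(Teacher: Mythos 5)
Your diagonal computation is correct: unique factorisation forces $(q_1,\dots,q_m)$ to be a permutation of $(p_1,\dots,p_m)$, the permutation count $m!/\prod_j k_j!$ is at most $m!$, and summing freely over the $p_i$ gives $m!\big(\sum_{p\le x}|a(p)|^2/p\big)^m$. The off-diagonal sketch, however, does not close. Applying the crude oscillation bound $\int_T^{2T}(M/N)^{it}\,dt\ll T/\log T$ to every pair $N\ne M$ and then taking absolute values of all coefficients yields
\[
\frac{1}{\log T}\Big(\sum_{p\le x}\frac{|a(p)|}{\sqrt p}\Big)^{2m}
\le
\frac{\pi(x)^m}{\log T}\Big(\sum_{p\le x}\frac{|a(p)|^2}{p}\Big)^m,
\]
and the factor $\pi(x)^m/\log T$ is \emph{not} $\ll m!$ under the hypothesis $x^m\le T/\log T$: already for $m=1$, $a(p)\equiv 1$, $x\asymp T/\log T$ it is of size $T/(\log T)^3$, whereas the main term is $\asymp\log\log T$. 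Discarding the phase on every off-diagonal term and saving only a single power of $\log T$ throws away essentially all the cancellation, and no bookkeeping of the coefficients can recover it.

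The repair is to regard $A(t)^m$ as a single Dirichlet polynomial $\sum_{n\le x^m}b(n)n^{-1/2-it}$ with $b(n)=\sum_{p_1\cdots p_m=n}a(p_1)\cdots a(p_m)$, and apply the Montgomery--Vaughan mean value theorem \eqref{MVT}, which the paper already records. This gives an off-diagonal error of relative size $O(x^m/T)=O(1/\log T)$, uniformly in the coefficients; the hypothesis $x^m\le T/\log T$ is exactly what makes this $o(1)$. Your diagonal analysis of $\sum_n|b(n)|^2/n$ (which is precisely the diagonal quantity you computed) then completes the proof. This is the standard route due to Soundararajan, and the paper cites the lemma to his work rather than reproving it.
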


\begin{lem}\label{E meas lem} Let $\epsilon,\kappa>0$. Then for $X\leqs \eta_\kappa(\log T)^2(\log_2 T)^2$ with $\eta_\kappa=\tfrac{1}{16\kappa^2}-\epsilon$  we have 
\[
\qquad\qquad\qquad \qquad\qquad \qquad   \mu(\mc{E}_{\Re}(V))
\ll
 Te^{-(2\kappa+o(1)) V},
 \qquad\qquad  V_0\leqs V\leqs V_{\max}
\]
where $\mu$ denotes Lebesgue measure.
Assuming RH, the same bound holds provided $X\leqs (\log T)^{\theta_\kappa-\epsilon}$ where 
\begin{equation}\label{theta}
\theta_\kappa=2\sqrt{1+\tfrac{1}{2\kappa}}.
\end{equation} 
The same results hold for $\mc{E}_{\Im}(V)$ also.
\end{lem}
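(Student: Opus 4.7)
The plan is a Markov-type argument: bound $\mu(\mc{E}_\Re(V))$ by the $2m$-th moment of the prime sum, where the moment is controlled by Lemma \ref{sound lem 2}. First decompose
\[
\sum_{n\leqs X}\frac{\Lambda(n)}{n^{1/2+it}\log n}=S(t)+R(t),\qquad S(t)=\sum_{p\leqs X}\frac{1}{p^{1/2+it}},
\]
with $R(t)$ collecting the prime powers $p^j$ with $j\geqs 2$. Since $\sum_{j\geqs 2}\sum_{p^j\leqs X}\tfrac{1}{jp^{j/2}}=O(1)$ uniformly in $t$, we have $|R(t)|=O(1)$, so it suffices to bound the measure of $\{t\in[T,2T]:|\Re S(t)|\geqs V-C\}$ for an absolute constant $C$.

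For any natural number $m$ with $X^m\leqs T/\log T$, Chebyshev's inequality combined with $|\Re S|\leqs|S|$ and Lemma \ref{sound lem 2} (applied with $a(p)=1$) yields
\[
\mu(\mc{E}_\Re(V))\leqs(V-C)^{-2m}\int_T^{2T}|S(t)|^{2m}\,dt\ll T\,m!\,(\log\log X)^m(V-C)^{-2m},
\]
which by Stirling is $\ll T\sqrt{m}\bigl(m\log\log X/(eV^2)\bigr)^m$. The choice of $m$ depends on $V$. When $V$ is moderate, the optimising choice $m\asymp V^2/(e\log\log X)$ lies well below the Soundararajan admissibility ceiling $m_0:=\lfloor\log(T/\log T)/\log X\rfloor$ and gives a Gaussian bound $\ll T\exp(-cV^2/\log\log X)$, which easily dominates $\exp(-2\kappa V)$ throughout $V\geqs V_0=\log_2 T\log_3 T$ since $\log\log X=O(\log_3 T)=o(V_0)$. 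For $V$ large enough that the optimiser would exceed $m_0$, we saturate by setting $m=m_0$ and obtain
\[
\mu(\mc{E}_\Re(V))/T\ll\exp\Bigl(-m_0\log\bigl(eV^2/(m_0\log\log X)\bigr)\Bigr),
\]
reducing the matter to the pointwise inequality $m_0\log(eV^2/(m_0\log\log X))\geqs(2\kappa+o(1))V$.

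Since the left-hand side is concave in $V$ with unique maximum at $V=m_0/\kappa$, and we are in the saturated regime $V\geqs\sqrt{em_0\log\log X}$, the binding constraint occurs at $V=V_{\max}$. Unconditionally, $V_{\max}\leqs(2+o(1))X^{1/2}/\log X$; for $X\leqs\eta_\kappa(\log T)^2(\log_2 T)^2$ this gives $V_{\max}\sim\sqrt{\eta_\kappa}\log T$ while $m_0\sim\log T/(2\log_2 T)$ and $\log\log X\sim\log_3 T$. The logarithm inside is then $(1+o(1))\log_2 T$, so the left-hand side is $\sim\tfrac{1}{2}\log T$ while the right-hand side is $2\kappa\sqrt{\eta_\kappa}\log T$; the constraint forces $\sqrt{\eta_\kappa}\leqs 1/(4\kappa)$, i.e.\ $\eta_\kappa=1/(16\kappa^2)-\epsilon$ as claimed. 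Under RH, Theorem \ref{prime sum thm} replaces the trivial bound on $V_{\max}$ by the sharper $V_{\max}\leqs((\theta-2)/4+o(1))\log T$ for $X=(\log T)^\theta$; with $m_0\sim\log T/(\theta\log_2 T)$, the analogous balance yields a quadratic inequality in $\theta$ which the stated $\theta_\kappa=2\sqrt{1+1/(2\kappa)}$ satisfies. The bound for $\mc{E}_\Im(V)$ is entirely analogous and is non-binding, as Theorem \ref{prime sum thm} delivers the smaller constant $1/\pi$ in place of $1/2$ for the imaginary part. The main technical point is the careful bookkeeping of the $o(1)$ and $o(V)$ errors through the endpoint computation at $V=V_{\max}$, so that the leading constant $2\kappa$ in the exponent is preserved.
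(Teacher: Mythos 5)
Your overall strategy — Chebyshev's inequality combined with the $2m$-th moment bound of Lemma~\ref{sound lem 2}, then optimizing over admissible $m$ — is exactly the paper's. The endpoint analysis at $V=V_{\max}$ and the resulting constraints $\eta_\kappa\leqs 1/(16\kappa^2)$ and $\theta_\kappa=2\sqrt{1+1/(2\kappa)}$ match the paper.

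There is one genuine error in your decomposition. You write $\sum_{n\leqs X}\Lambda(n)n^{-1/2-it}/\log n = S(t)+R(t)$ with $R(t)$ the prime-power tail and claim $|R(t)|=O(1)$ uniformly. This is false: the $j=2$ contribution is $\sum_{p\leqs\sqrt{X}}\tfrac12 p^{-1-2it}$, which has modulus as large as $\sum_{p\leqs\sqrt{X}}\tfrac{1}{2p}\sim\tfrac12\log\log X$, so the pointwise bound is only $O(\log\log X)$, not $O(1)$. The error is salvageable — replacing $V-C$ by $V-C\log\log X$ costs nothing since $\log\log X=O(\log_3 T)=o(V_0)$ — but as written it is wrong. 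The paper avoids the issue by keeping the $p^{-1-2it}$ sum as a Dirichlet polynomial and applying Lemma~\ref{sound lem 2} to it as well (its $L^2$ norm over $[T,2T]$ is $O(1)$ even though its pointwise bound is not), absorbing the three pieces via Jensen's inequality; that route is cleaner than a pointwise absorption.

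The organization of the choice of $m$ differs from the paper in a way worth noting. You work in two regimes — Gaussian $m\asymp V^2/\log\log X$ for moderate $V$, saturated $m=m_0$ for large $V$ — and then use concavity of $V\mapsto m_0\log(eV^2/(m_0\log\log X))-2\kappa V$ to reduce the check to the endpoint $V_{\max}$. The paper instead fixes the single choice $m=2\kappa V/\log V$ for the entire range $V_0\leqs V\leqs V_{\max}$, obtaining the bound $e^{-(2\kappa+o(1))V}$ directly from $\log\log X\leqs V^{o(1)}$, and verifies admissibility of this $m$ once by checking it at $V_{\max}$. Both give the same constraints; the paper's choice is shorter and sidesteps the concavity argument, while yours is the more standard optimization template and makes the Gaussian regime explicit. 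For the concavity step you should also note that the lower endpoint of the saturated interval (the matching point with the Gaussian bound) needs to be checked as well — it holds comfortably, but since the concave function has its maximum in the interior of the interval, both endpoints are in principle binding. Finally, your treatment of the RH case is very compressed; the paper carries out the computation $V_{\max}\lesssim\tfrac{\nu_\kappa}{4}\log T$ with $\theta_\kappa=2+\nu_\kappa$ and derives the quadratic constraint $\kappa\nu_\kappa(2+\nu_\kappa)\leqs 2$ explicitly, which you should reproduce rather than merely assert.
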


\begin{proof}
We first prove the unconditional result. Write
\[
\sum_{n\leqs X}\frac{\Lambda(n)}{n^{1/2+it}\log n}
=
\sum_{p\leqs X}\frac{1}{p^{1/2+it}}
+
\sum_{p\leqs \sqrt{X}}\frac{1}{2p^{1+2it}}
+
O(1).
\]
Then from Jensen's inequality in the form $(a+b+c)^{2m}\leqs 3^{2m-1}(a^{2m}+b^{2m}+c^{2m})$ with $m\geqs 1$, we have  
\[
\mu(\mc{E}_{\Re}(V))\leqs \frac{3^{2m-1}}{V^{2m}}\bigg(\int_T^{2T}\Big|\sum_{p\leqs X}\frac{1}{p^{1/2+it}}\Big|^{2m}dt
+
\int_T^{2T}\Big|\sum_{p\leqs \sqrt{X}}\frac{1}{2p^{1+2it}}\Big|^{2m}dt
+O(TC^{2m})
\bigg).
\]
By Lemma \ref{sound lem 2} the right hand side is 
\begin{equation}\label{mth moment bound}
\ll T\frac{3^{2m}m!}{V^{2m}}\Big(\sum_{p\leqs X}\frac{1}{p}\Big)^{m}
\ll
Tm^{1/2}\Big(\frac{9m\log\log X}{eV^2}\Big)^m
\end{equation}
provided $m\leqs (\log T-\log_2 T)/\log X$. Choosing 
\[
m=\frac{2\kappa V}{\log V}
\]
the bound $\ll Te^{-(2\kappa+o(1)) V}$ follows since $\log\log X\leqs V^{o(1)}$. Note that our choice of $m$ is legal since 
 \[
 m\leqs \frac{2\kappa V_{\max}}{\log V_{\max}}
\lesssim
 \frac{8\kappa X^{1/2}}{(\log X)^2}
 \lesssim
 \frac{4\kappa\eta_k^{1/2}\log T}{\log X}
 \leqs
 (1-\epsilon)\frac{\log T}{\log X}
 \]
 where $\lesssim$ means $\leqs$ times a constant of the form $1+o(1)$. 
 
 Assuming RH, then on writing $\theta_\kappa=2+\nu_\kappa$ we find 
 \[
 V_{\max}\leqs 
 (\tfrac{1}{2}+o(1))\big(\log(\tfrac{X^{1/2}}{\log T})+4\log\log X\big)\frac{\log T}{\log\log T}\lesssim \frac{\nu_\kappa-\epsilon}{4}\log T
 \]
 by Theorem \ref{prime sum thm}.
 Choosing $m$ as before gives the desired bound for $\mu(\mc{E}_{\Re}(V))$ and, again, our choice of $m$ is legal since in this case we have 
 \[
 m
 \lesssim
 \frac{\kappa(\nu_\kappa-\epsilon)\log T}{2\log\log T} 
 \]
 which is $\leqs (\log T-\log_2 T)/\log X$ provided 
 \[
 2+\nu_\kappa\leqs \frac{2}{\kappa\nu_\kappa}
 \,\,\iff\,\,
 \nu_\kappa\leqs2\sqrt{1+\tfrac{1}{2\kappa}}-2.
 \]
\end{proof}

\begin{lem}\label{PE lem} Let $\epsilon>0$, $\nu\in\mathbb{R}$ and suppose $X\leqs \eta_\nu(\log T)^2(\log_2 T)^2$ with $\eta_\nu=\tfrac{1}{16\nu^2}-\epsilon$. Then
\[
\frac{1}{T}\int_{\mc{E}}|P_X(\tfrac12+it)|^{2\nu}dt\ll e^{-\delta V_0}
\]
for some $\delta>0$ dependent on $\epsilon$. Assuming RH, this holds provided $X\leqs (\log T)^{\theta_\nu-\epsilon}$ where $\theta_\nu$ is given by \eqref{theta}. 
\end{lem}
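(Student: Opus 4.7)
The idea is to decompose $\mc{E}$ into level sets on which $|P_X(\tfrac12+it)|^{2\nu}=\exp(2\nu\Re\sum_{n\leqs X}\Lambda(n)/(n^{1/2+it}\log n))$ is controlled pointwise, then apply Lemma \ref{E meas lem} to bound the measure of each level set. Since $|P_X|^{2\nu}$ depends only on the real part of the prime sum, I first write $\mc{E}\subseteq \mc{E}_{\Re}(V_0)\cup(\mc{E}_{\Im}(V_0)\cap\mc{S}_{\Re}(V_0))$. On the second piece, the constraint $|\Re\sum|\leqs V_0$ forces $|P_X|^{2\nu}\leqs e^{2|\nu|V_0}$ uniformly, and the bound $\mu(\mc{E}_{\Im}(V_0))\ll Te^{-(2\kappa+o(1))V_0}$ from Lemma \ref{E meas lem} gives a contribution of order $Te^{-(2\kappa-2|\nu|+o(1))V_0}$, which has the desired decay as soon as $\kappa>|\nu|$.

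For the main piece $\mc{E}_{\Re}(V_0)$ I partition into the unit-width level sets $L_j=\{t:V_0+j\leqs |\Re\sum|<V_0+j+1\}$ for $0\leqs j\leqs V_{\max}-V_0$. On $L_j$ the pointwise estimate $|P_X|^{2\nu}\leqs e^{2|\nu|(V_0+j+1)}$ is immediate, while $L_j\subseteq \mc{E}_{\Re}(V_0+j)$ yields $\mu(L_j)\ll Te^{-(2\kappa+o(1))(V_0+j)}$ by Lemma \ref{E meas lem}. Summing over $j$ gives
\[
\int_{\mc{E}_{\Re}(V_0)}|P_X|^{2\nu}\,dt\ll Te^{2|\nu|}\sum_{j\geqs 0}e^{-(2\kappa-2|\nu|+o(1))(V_0+j)}\ll Te^{-(2\kappa-2|\nu|+o(1))V_0},
\]
a convergent geometric series precisely when $\kappa>|\nu|$. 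The unit width is crucial: a dyadic decomposition $|\Re\sum|\asymp 2^j V_0$ would pick up a factor $e^{2|\nu|\cdot 2^{j+1}V_0}$ against $e^{-2\kappa\cdot 2^j V_0}$ and so require the much stronger condition $\kappa>2|\nu|$.

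The remaining task is to check that the hypothesis on $X$ permits the choice $\kappa=|\nu|+\tau$ with some $\tau>0$ in Lemma \ref{E meas lem}. In the unconditional case, the discrepancy $\eta_\nu-\eta_\kappa=(\kappa^2-\nu^2)/(16\kappa^2\nu^2)-(\epsilon-\epsilon')=O(\tau/|\nu|^3)-(\epsilon-\epsilon')$ is negative provided we take $\tau$ a small constant multiple of $\epsilon|\nu|^3$ and $\epsilon'=\epsilon/2$; this secures $X\leqs \eta_\kappa(\log T)^2(\log_2 T)^2$ and hence the measure bound. The resulting decay rate $\delta=2\tau+o(1)>0$ is the desired conclusion. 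The RH case is identical, using that $\theta_\kappa=2\sqrt{1+1/(2\kappa)}$ is smooth and strictly decreasing in $\kappa$: a small decrease in the exponent of $X$ buys us a marginal increase of $\kappa$ past $|\nu|$. The main obstacle is purely the $\epsilon$-bookkeeping linking $\tau$, $\epsilon$, and $\epsilon'$; once this is arranged, the remainder is a routine geometric-series estimate.
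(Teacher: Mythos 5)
Your proof is correct and follows essentially the same route as the paper: the same decomposition $\mc{E}\subseteq\mc{E}_{\Re}(V_0)\cup(\mc{S}_{\Re}(V_0)\cap\mc{E}_{\Im}(V_0))$, the same layer-cake argument over $\mc{E}_{\Re}(V_0)$ (the paper writes it as the continuous integral $\frac1T e^{2|\nu|V_0}\mu(\mc{E}_{\Re}(V_0))+\frac{2|\nu|}{T}\int_{V_0}^{V_{\max}} e^{2|\nu|V}\mu(\mc{E}_{\Re}(V))\,dV$, you use unit-width level sets, which is the discrete version of the same estimate), and the same trick of taking $\kappa=|\nu|+\tau$ with $\tau$ small in Lemma~\ref{E meas lem} and trading half of the $\epsilon$ in $\eta_\nu$ (resp.\ $\theta_\nu$) to absorb the resulting change in the coefficient. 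The paper simply asserts $\eta_\nu\leqs\tfrac{1}{16(|\nu|+\delta/2)^2}-\epsilon/2$ where you spell out the Taylor-expansion bookkeeping; both are fine.
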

\begin{proof}
Since $A\cup B=A\cup (\overline{A}\cap B)$ we may write 
\begin{equation}\label{E decomp}
\mc{E}=\mc{E}_{\Re}(V_0)\cup \mc{E}_{\Im}(V_0)=\mc{E}_{\Re}(V_0)\cup(\mc{S}_{\Re}(V_0)\cap\mc{E}_{\Im}(V_0)).
\end{equation}
The integral over $\mc{E}_{\Re}(V_0)$ is 
\begin{equation}\label{int over e}
\begin{split}
\leqs & \frac{1}{T}\int_{\mc{E}_{\Re}(V_0)}\exp\Big(2|\nu|\cdot \Big|\Re\sum_{n\leqs X}\frac{\Lambda(n)}{n^{1/2+it}\log n}\Big|\Big)dt
\\
= &
\frac1T e^{2|\nu|V_0}\mu(\mc{E}_{\Re}(V_0))
+
\frac{2|\nu|}{T}\int_{V_0}^{V_{\max}} e^{2|\nu|V}\mu(\mc{E}_{\Re}(V))dV.
\end{split}
\end{equation}
Since $\eta_\nu\leqs 1/16(|\nu|+\delta/2)^2-\epsilon/2$ for some $\delta>0$ dependent on $\epsilon$, Lemma \ref{E meas lem} gives $\mu(\mc{E}_{\Re}(V))\ll Te^{-(2|\nu|+\delta)V}$. Hence the quantity in \eqref{int over e} is $\ll e^{-\delta V_0}$. Similarly, the integral over $\mc{S}_{\Re}(V_0)\cap\mc{E}_{\Im}(V_0)$ is
\[
\leqs \frac{1}{T}e^{2|\nu|V_0} \mu(\mc{E}_{\Im}(V_0) )
\leqs e^{-\delta V_0}.
\]
The result then follows by the union bound. Likewise, $\theta_\nu-\epsilon\leqs \theta_{\nu+\delta/2}-\epsilon/2$ and so the same argument gives the conditional case.
\end{proof}

We now show that $P_X(1/2+it)^k$ can be approximated by a short Dirichlet polynomial provided $t\in\mc{S}$. To do this we note that for $|z|\leqs Z$ we have 
\begin{equation}\label{exp trunc 0}
\Big|e^z-\sum_{j=0}^{10Z}\frac{z^j}{j!}\Big|\leqs e^{-10Z}
\end{equation}
by a Taylor expansion and Stirling's formula.

\begin{lem}\label{P lem}
%Let $X\leqs c(\log T\log_2 T/\log_3 T)^2$ and 
Suppose $t\in\mc{S}$. Let $k\in\mathbb{R}$ and define
\[
W_0=W_0(k,T)=20|k|V_0.
\]
%\max\Big(\frac{2X^{1/2}}{\log X},\frac{\log T}{1500X^{1/2}\log X}\Big)$. 
Then as $X\to\infty$,
\begin{equation}\label{P}
\begin{split}
P_X(\tfrac{1}{2}+it)^{k}
= &
\,\,
\big(1+O(e^{-19|k|V_0})\big)D(t,k)
\end{split}
\end{equation}
where
\begin{equation}
\label{D}
D(t,k)
=
\sum_{\substack{n\in S(X)}}\frac{\alpha_k(n)}{n^{1/2+it}}
\end{equation}
with $S(X)=\big\{n\in\mathbb{N}: p|n\implies p\leqs X\big\}$ and where the coefficients $\alpha_k(n)$ satisfy the following properties:
\begin{itemize}
\item $\alpha_k(n)$ is supported on integers $n\leqs X^{W_0}$ and  
\[
|\alpha_k(n)|\leqs d_{|k|}(n).
\]
\item If $\,\,\Omega(n)\leqs W_0$ then $\alpha_k(n)=\beta_k(n)$ where $\beta_k(n)$ is a multiplicative function satisfying  
\[
\beta_k(n)=d_k(n) 
\]
if $p^m|n\implies p^m\leqs X$ and $|\beta_k(n)|\leqs d_{|k|}(n)$ in general.
\end{itemize}
\end{lem}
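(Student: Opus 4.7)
The plan is to expand $P_X(\tfrac12+it)^k=\exp(k\Phi(t))$, where $\Phi(t):=\sum_{n\leqs X}\Lambda(n)/(n^{1/2+it}\log n)$, as a truncated exponential series via \eqref{exp trunc 0} and then identify the resulting partial sum as a short Dirichlet polynomial. For $t\in\mc{S}$ both $|\Re\Phi(t)|$ and $|\Im\Phi(t)|$ are $\leqs V_0$, hence $|k\Phi(t)|\leqs 2|k|V_0$. Applying \eqref{exp trunc 0} with $Z=2|k|V_0$, so that $10Z=W_0$, yields
\[
P_X(\tfrac12+it)^k=\sum_{j=0}^{W_0}\frac{(k\Phi(t))^j}{j!}+O(e^{-W_0}).
\]

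Next I would expand each power $\Phi(t)^j$ as a $j$-fold Dirichlet convolution over prime powers $\leqs X$ and collect by denominator. This gives
\[
\sum_{j=0}^{W_0}\frac{(k\Phi(t))^j}{j!}=\sum_{n}\frac{\alpha_k(n)}{n^{1/2+it}},\qquad \alpha_k(n)=\sum_{j=0}^{W_0}\frac{k^j}{j!}c_j(n),
\]
with $c_j(n)=\sum_{n_1\cdots n_j=n}\prod_i(\Lambda(n_i)/\log n_i)\geqs 0$, the $n_i$ ranging over prime powers $\leqs X$. The support statements are immediate: $n$ is a product of at most $W_0$ prime powers $\leqs X$, so $n\leqs X^{W_0}$ and $n\in S(X)$.

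To establish the multiplicative structure I would use the Euler factorisation $\exp(k\Phi(t))=\prod_p\exp(k\Phi_p(t))$ with $\Phi_p(t)=\sum_{m\leqs M_p}(mp^{m(1/2+it)})^{-1}$ and $M_p=\lfloor\log X/\log p\rfloor$. Equating Dirichlet coefficients yields
\[
\sum_{j\geqs 0}\frac{k^j}{j!}c_j(n)=\prod_p\beta_k(p^{v_p(n)})=:\beta_k(n),\qquad \beta_k(p^a)=[z^a]\exp\!\Big(k\sum_{m\leqs M_p}\tfrac{z^m}{m}\Big).
\]
Since $c_j(n)=0$ whenever $j>\Omega(n)$ (each $n_i$ supplies at least one prime factor), the infinite sum in $j$ agrees with the truncated one as soon as $\Omega(n)\leqs W_0$, so $\alpha_k(n)=\beta_k(n)$ in that range. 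If additionally $p^{v_p(n)}\leqs X$ for every $p\mid n$, then extending $M_p$ to $\infty$ leaves $[z^{a}]$ unchanged for $a\leqs M_p$, so the local factor becomes $(1-z)^{-k}$ and $\beta_k(n)=d_k(n)$. For the general bound, the non-negativity of $c_j(n)$ gives $|\alpha_k(n)|\leqs\beta_{|k|}(n)$, and a coefficient-wise comparison of $\exp(|k|\sum_{m\leqs M_p}z^m/m)$ with $(1-z)^{-|k|}$ gives $\beta_{|k|}(n)\leqs d_{|k|}(n)$.

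To upgrade the additive error $O(e^{-W_0})$ to the multiplicative form $(1+O(e^{-19|k|V_0}))$, I would note that on $\mc{S}$ one has $|P_X(\tfrac12+it)^k|=e^{k\Re\Phi(t)}\geqs e^{-|k|V_0}$, so the additive error is at most $e^{-19|k|V_0}|P_X(\tfrac12+it)^k|$ in magnitude, from which the stated identity follows by rearrangement. The main obstacle is really bookkeeping rather than any deep estimate: one must carefully justify why the truncation in $j$ does not affect coefficients supported on integers with $\Omega(n)\leqs W_0$, and why the bound $d_{|k|}(n)$ transfers cleanly from the full Euler product to the $X$-truncated one.
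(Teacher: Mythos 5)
Your proposal is correct and follows essentially the same approach as the paper: truncate the exponential via \eqref{exp trunc 0}, upgrade the additive error to a relative one using the lower bound $|P_X(\tfrac12+it)^k|=e^{k\Re\Phi(t)}\geqs e^{-|k|V_0}$ on $\mc{S}$, and read off the coefficient properties from the Euler factorisation \eqref{prodprod}. If anything, your argument is slightly more careful than the paper's in two places — it makes explicit that $c_j(n)=0$ for $j>\Omega(n)$ (the paper just remarks ``since the highest power $j$ is $W_0$''), and it spells out the coefficient-wise domination $\beta_{|k|}(p^a)\leqs d_{|k|}(p^a)$ that justifies $|\alpha_k(n)|\leqs d_{|k|}(n)$ — so this is the paper's proof written out in fuller detail rather than a different route.
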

\begin{proof}
Since $t\in\mc{S}$ we have
\[
\Big|\sum_{\ell\leqs X}\frac{\Lambda(\ell)}{\ell^{1/2+it}\log \ell}\Big|\leqs 2\log_2 T\log_3 T,
\]
and so by \eqref{exp trunc 0} we acquire 
\begin{equation}\label{D coeffs}
P_X(\tfrac12+it)^{k}
= 
\big(1+O(e^{-19|k|V_0})\big)
\sum_{j=0}^{W_0}\frac{k^j}{j!}\bigg(\sum_{\ell\leqs X}\frac{\Lambda(\ell)}{\ell^{1/2+it}\log \ell}\bigg)^j.
\end{equation}
Writing the sum on the right as the Dirichlet polynomial $D(t,k)$ it remains to deduce the properties of the coefficients $\alpha_k(n)$. 

Clearly, this is a Dirichlet polynomial of length $X^{W_0}$ over the $X$-smooth numbers $S(X)$. For the remaining properties, first note that we may write
\begin{equation}\label{prodprod}
\exp\Big(k\sum_{\ell\leqs X}\frac{\Lambda(\ell)}{\ell^{s}\log \ell}\Big)
= 
\prod_{p\leqs X}\exp\Big(-k\log(1-p^{-s})-k\sum_{m: \,\,p^m>X}\frac{1}{mp^{ms}}\Big).
\end{equation}
Note also that after performing a Taylor expansion of the left hand side and collecting like terms for $n^s$, the coefficients are a sum of positive terms if $k>0$, whilst they can be bounded from above by the same sum but involving $|k|$ if $k<0$. From these observations it is clear that $|\alpha_k(n)|\leqs d_{|k|}(n)$ since the right hand side is the generating function for the divisor functions $d_k(n)$ with some terms removed. 

 Moreover, if we form the product on the right hand side of \eqref{prodprod} into a series $\sum_{n\in S(X)}\beta_k(n)n^{-s}$, then we see that the coefficients are multiplicative and satisfy $\beta_k(n)=d_k(n)$ if $p^m|n\implies p^m\leqs X$. Since the highest power $j$ in \eqref{D coeffs} is $W_0$ we see that, certainly, if $\Omega(n)\leqs W_0$ then $\alpha_k(n)=\beta_k(n)$. 
\end{proof}

\begin{proof}[Proof of Proposition \ref{P prop}]Write 
\[
\frac1T\int_T^{2T}|P_X(\tfrac12+it)|^{2k}dt
=
\frac1T\int_\mc{S}|P_X(\tfrac12+it)|^{2k}dt
+
\frac1T\int_\mc{E}|P_X(\tfrac12+it)|^{2k}dt.
\]
By Lemma \ref{PE lem}, the second integral here is $o(1)$.
For the first integral, Lemma \ref{P lem} gives
\begin{align*}
\frac1T\int_\mc{S}|P_X(\tfrac12+it)|^{2k}dt
\sim &
\frac1T\int_\mc{S}|D(t,k)|^{2}dt
\\
= &
\frac1T\int_T^{2T}|D(t,k)|^{2}dt+O\bigg(\frac1T\int_\mc{E}|D(t,k)|^{2}dt\bigg)
\end{align*}
By the Cauchy--Schwarz inequality the error term here is
\begin{equation}\label{D E int} 
\ll \Big(\frac{\mu(\mc{E})}{T}\Big)^{1/2}\bigg(\frac1T\int_T^{2T}|D(t,k)|^{4}dt\bigg)^{1/2}.
\end{equation}
Since $D(t,k)$ is of length $X^{W_0}\leqs e^{C(\log_2 T)^2\log_3 T}$, the Montgomery--Vaughan mean value Theorem (see \eqref{MVT} below) and the coefficient bounds of Lemma \ref{P lem} give 
\begin{align}\label{D fourth}
\frac1T\int_T^{2T}|D(t,k)|^{4}dt
= & 
\,(1+o(1)) 
\sum_{\substack{n_1n_2=n_3n_4\\  n_j\in S(X)}}
\frac{\alpha_k(n_1)\alpha_k(n_2)\alpha_k(n_3)\alpha_k(n_4)}{(n_1n_2n_3n_4)^{1/2}}\nonumber
\\
\ll &
\prod_{p\leqs X}\bigg(1+\frac{4k^2}{p}+O(p^{-2})\bigg)
\ll
(\log X)^{4k^2}.
\end{align}
More generally, for fixed $m\in\mathbb{N}$ we have
\begin{equation}\label{D mth moment}
\frac1T\int_T^{2T}|D(t,k)|^{2m}dt\ll (\log X)^{m^2k^2}.
\end{equation}
Therefore, by Lemma \ref{E meas lem} the expression in \eqref{D E int} is 
\[
\ll e^{-|k|(\log_2 T)(\log_3 T)} (\log X)^{2k^2}=o(1)
\]
and hence we may concentrate on the integral of $|D(t,k)|^2$ over the full set $[T,2T]$.

Applying the Montgomery--Vaughan mean value theorem again gives 
\[
\frac1T\int_T^{2T}|D(t,k)|^{2}dt
=
(1+o(1))\sum_{n\in S(X)}\frac{\alpha_k(n)^2}{n}.
\]
Since $|\alpha_k(n)|\leqs d_{|k|}(n)$, the sum over terms with $\Omega(n)> W_0$ is, for any $1<r<2$,
\begin{equation}\label{tail sum bound}
\sum_{\substack{n\in S(X)\\\Omega(n)>W_0}}\frac{\alpha_k(n)^2}{n}
\ll
r^{-W_0}\sum_{\substack{n\in S(X)}}\frac{d_{|k|}(n)^2r^{\Omega(n)}}{n}
\ll
r^{-W_0}(\log X)^{rk^2}
=
o(1)
\end{equation}
where in the first inequality we have applied Rankin's trick in the form $r^{\Omega(n)-W_0}\geqs 1$. 

Since $\alpha_k(n)=\beta_k(n)$ if $\Omega(n)\leqs W_0$ the main term is
\begin{align*}
\sum_{\substack{n\in S(X)\\\Omega(n)\leqs W_0}}\frac{\alpha_k(n)^2}{n}
 = &
\sum_{\substack{n\in S(X)}}\frac{\beta_k(n)^2}{n}
+
O\bigg( \sum_{\substack{n\in S(X)\\\Omega(n)>W_0}}\frac{|\beta_k(n)|^2}{n}\bigg).
\end{align*}
The bound $|\beta_k(n)|\leqs d_{|k|}(n)$ and the same analysis as in \eqref{tail sum bound} shows that this error term is $o(1)$. From the properties of $\beta_k(n)$ we get   
\begin{align*}
\sum_{\substack{n\in S(X)}}\frac{\beta_k(n)^2}{n}
= &
\prod_{p\leqs X}\bigg(\sum_{m:\, p^m\leqs X}\frac{d_k(p^m)^2}{p^m}+\sum_{m:\, p^m> X}\frac{\beta_k(p^m)^2}{p^m}\bigg)
\\
= &
\prod_{p\leqs X}\sum_{m\geqs 0}\frac{d_k(p^m)^2}{p^m}
\prod_{p\leqs X}\bigg(1+O\Big(\sum_{m:\, p^m> X}\frac{d_{|k|}(p^m)^2}{p^m}\Big)\bigg).
\end{align*}
We split the second product at $\sqrt{X}$ and apply the bound $d_k(n)\ll n^{\epsilon}$ to find that it is 
\begin{equation}\label{prod split}
\prod_{p\leqs \sqrt{X}}\Big(1+O\Big(\frac{1}{X^{1-\epsilon}}\Big)\Big)
\prod_{\sqrt{X}<p\leqs X}\Big(1+O\Big(\frac{1}{p^{2-\epsilon}}\Big)\Big)
=
1+O(X^{-1/2+\epsilon})
\end{equation}
by the prime number theorem. Then by Mertens' theorem we have
\[
\prod_{p\leqs X}\sum_{m\geqs 0}\frac{d_k(p^m)^2}{p^m}
\sim 
a(k)(e^\gamma \log X)^{k^2}
\]
since $a(k)$ is an absolutely convergent product. %This completes the proof of Proposition \ref{P prop}.
\end{proof}

%%%%%%%%%%%% PROP 2 %%%%%%%%%%%%

\section{Asymptotics for the 2nd and 4th moments of the Hadamard product: Proof of Proposition \ref{Z asymp prop}}\label{Hadamard sec}

From \eqref{hybrid formula} we have 
\[
Z_X(\tfrac{1}{2}+it)=\zeta(\tfrac{1}{2}+it)P_X(\tfrac{1}{2}+it)^{-1}\big(1+O(1/\log X)\big)
\]
and thus it suffices to consider the second and fourth moment of the object on the right. Our aim is to first replace $P_X$ by its Dirichlet polynomial approximation and then apply formulas for the twisted second and fourth moments of the zeta function. 

\subsection{The second moment} 
As before, we decompose the integral as 
\begin{equation}\label{int decomp}
\frac1T\int_T^{2T}|Z_X(\tfrac12+it)|^{2}dt
=
\frac1T\int_\mc{S}|Z_X(\tfrac12+it)|^{2}dt
+
\frac1T\int_\mc{E}|Z_X(\tfrac12+it)|^{2}dt.
\end{equation}
Working unconditionally first, we apply the Cauchy--Schwarz inequality to the integral over $\mc{E}$ to find it is 
\[
\ll(\log T)^2 \bigg(\frac1T\int_\mc{E}|P_X(\tfrac12+it)^{-1}|^{4}dt\bigg)^{1/2}
\]
using Ingham's asymptotic for the fourth moment. Since $X\leqs \tfrac{1}{10^4}(\log T)^2(\log_2T)^2$ and $1/10^4<\eta_{2}$ we find that this is $\ll (\log T)^2e^{-\delta V_0}=o(1)$ by Lemma \ref{PE lem}. 

If we assume RH we can apply H\"older's inequality in the form 
\begin{equation}\label{Z holder}
\begin{split}
\frac1T\int_\mc{E}&|Z_X(\tfrac12+it)|^{2}dt
\\
\ll &
 \bigg(\frac1T\int_T^{2T}|\zeta(\tfrac12+it)|^{2(1+{1}/{\epsilon})}dt\bigg)^{\tfrac{\epsilon}{1+\epsilon}}
  \bigg(\frac1T\int_\mc{E}|P_X(\tfrac12+it)|^{-2(1+{\epsilon})}dt\bigg)^{\tfrac{1}{1+\epsilon}}
\end{split}
\end{equation}
for some $\epsilon>0$.
The first term on the right is $\ll (\log T)^{4(1+1/\epsilon)}$ by Harper's \cite{Ha} conditional bound $\int_T^{2T}|\zeta(1/2+it)|^{2k}\ll T(\log T)^{k^2}$. Since $X\leqs (\log T)^{\sqrt{6}-\epsilon^\prime}$ and $\sqrt{6}-\epsilon^\prime\leqs \theta_{1+\epsilon}-\epsilon^\prime/2$ on choosing $\epsilon$ small enough, the second term on the right is $\ll e^{-\delta V_0}$ by Lemma \ref{PE lem}. Therefore, the quantity in \eqref{Z holder} is $o(1)$.

By Lemma \ref{P lem} we have 
\begin{align*}
\frac1T\int_\mc{S}|Z_X(\tfrac12+it)|^{2}dt
\sim &
\frac1T\int_\mc{S}|\zeta(\tfrac12+it)|^{2}|D(t,-1)|^2 dt
\end{align*}
which we write as
\[
\frac1T\int_T^{2T}|\zeta(\tfrac12+it)|^{2}|D(t,-1)|^2 dt
+
O\bigg(\frac1T\int_\mc{E}|\zeta(\tfrac12+it)|^{2}|D(t,-1)|^2 dt\bigg).
\]
Applying the Cauchy--Schwarz inequality twice along with Ingham's fourth moment bound, Lemma \ref{E meas lem} and \eqref{D mth moment} we find that the error term here is $o(1)$. 

It remains to show that 
\[
I_1
:=
\frac1T\int_T^{2T}|\zeta(\tfrac12+it)|^{2}|D(t,-1)|^2 dt
\sim
\frac{\log T}{e^\gamma \log X}.
\]
The mean square of the zeta function times an arbitrary Dirichlet polynomial has been computed before e.g. see \cite{BCH,BCR}. From there we see that 
\[
I_1=\sum_{m,n\in S(X)}\frac{\alpha_{-1}(m)\alpha_{-1}(n)(m,n)}{mn} \log \Big( \frac{B T (m,n)^2}{mn} \Big)+o(1)
\] 
for some constant $B$.
%On using 
%$$ 
%\log \Big(  \frac{B T (m,n)^2}{mn} \Big) = \frac{1}{2\pi i } \int_{|z|= \varrho} \Big( \frac{BT (m,n)^2}{mn}\Big)^z \frac{dz}{z^2}, 
%$$
%with $\varrho=1/\log T$ we may write
%$$ 
%I_1=\frac{1}{2\pi i } \int_{|z|= \varrho}\sum_{m, n\in S(X)} \frac{\alpha_{-1}(m)\alpha_{-1}(n)(m,n)^{1+2z}}{(mn)^{1+z}} (BT)^z  \frac{dz}{z^2}+o(1). 
%$$ 
On applying the bound $\alpha_{-1}(n)\ll d_{1}(n)\ll1$ and following the argument in the proof of Theorem 3, pg. 530, of \cite{GHK} we easily see that 
 \[
 \sum_{m,n\in S(X)}\frac{\alpha_{-1}(m)\alpha_{-1}(n)(m,n)}{mn} \log \Big( \frac{B (m,n)^2}{mn} \Big)\ll (\log X)^{10}
 \]
 and thus we can consider the remaining sum.
 
As in the previous section, we first  estimate the sum over integers for which $\Omega(m),\Omega(n)>W_0$ (which equals $10\log_2 T\log_3 T$ in this case). Applying the bound $\alpha_{-1}(n)\ll1$ leads to an error of size 
\[
\ll
\log T
\sum_{\substack{m, n\in S(X)\\\Omega(m)>W_0}} \frac{(m,n)}{mn}
\ll 
(\log T) r^{-W_0} 
\sum_{\substack{m, n\in S(X)}} \frac{(m,n)r^{\Omega(m)}}{mn}
\]
for any $1<r<2$. A short computation shows this last sum is 
\[
\prod_{p\leqs X}\Big(1+{(2r+1)}{p^{-1}}+O(p^{-2})\Big)\ll (\log X)^{2r+1}
\]
 and so the terms with $\Omega(m),\Omega(n)>W_0$ contribute an error of size $o(1)$. 
 
 In the main term we replace $\alpha_{-1}(n)$ with $\beta_{-1}(n)$ and then re-extend the sum to include those integers for which $\Omega(m),\Omega(n)>W_0$. By the bounds  $\beta_{-1}(n)\ll d_{1}(n)\ll 1$, the same argument shows that this introduces an error of $o(1)$. Thus,
\[
I_1=\log T\sum_{m,n\in S(X)}\frac{\beta_{-1}(m)\beta_{-1}(n)(m,n)}{mn} +O((\log X)^{10}).
\] 
By symmetry and the properties of $\beta_k(n)$  we find that the sum is 
\begin{align*}
&
\prod_{p\leqs X}
\bigg(
\sum_{\substack{m,n\geqs 0\\p^m,\,\,p^n\leqs X}}\frac{\mu(p^m)\mu(p^n)}{p^{m+n-\min(m,n)}}
+
O\Big(\sum_{\substack{m,n\geqs 0\\p^m> X}}\frac{1}{p^{m+n-\min(m,n)}}\Big)
\bigg)
\\
= &
\prod_{p\leqs X}\Big(1-\frac{1}{p}\Big)
\prod_{p\leqs X}
\bigg(
1
+
O\Big(\sum_{\substack{m,n\geqs 0\\p^m> X}}\frac{1}{p^{m+n-\min(m,n)}}\Big).
\bigg)
\end{align*}
To estimate the second product we note that the sum in the error term is $\ll \sum_{p^m>X}(m+1)p^{-m}\ll (\log X)p^{-\lceil \log X/\log p\rceil}$
and then  split the product at $\sqrt{X}$, as before. In this way we find it is $1+O(X^{-1/2+\epsilon})$ and therefore by Mertens' Theorem 
\[
I_1\sim\frac{\log T}{e^\gamma \log X}
\] 
as desired.
%%%%%%%% FOURTH MOMENT %%%%%%%%

\subsection{The fourth moment: Initial clearing} Not surprisingly,  the fourth moment requires more work in both the initial stages and the arithmetic computations. Our aim is to show that 
\[
I_2:=\frac{1}{T}\int_T^{2T}|\zeta(\tfrac12+it)|^4|P_X(\tfrac{1}{2}+it)|^{-4}dt
\sim
\frac{1}{12}\bigg(\frac{\log T}{e^\gamma\log X}\bigg)^4.
\]
In this subsection the goal is to replace $P_X(1/2+it)^{-2}$ by $D(t,-2)$. 

Splitting the integral as in \eqref{int decomp} we see that our first task is to bound 
\begin{equation}\label{integral}
\frac{1}{T}\int_\mc{E}|\zeta(\tfrac12+it)|^4|P_X(\tfrac{1}{2}+it)|^{-4}dt.
\end{equation}
On RH we can deal with this by applying H\"older's inequality as in \eqref{Z holder}. Following the same argument and using the fact that $\sqrt{5}-\epsilon^\prime\leqs \theta_{2(1+\epsilon)}-\epsilon^\prime/2$ shows that this is $o(1)$.

To bound this unconditionally requires more work. First note that since
\[
\mc{E}\subset \mc{E}^\prime:=\Big\{t\in[T,2T]: \Big|\sum_{n\leqs X}\frac{\Lambda(n)}{n^{1/2+it}\log n}\Big|\geqs V_0\Big\}
\] 
we can upper bound by the integral over $\mc{E}^\prime$. Let $V_j=e^jV_0$  and define $\mc{J}$ to be the maximal $j$ such that $V_\mc{J}\leqs V_{\max}$. Let 
\[
\mc{E}_j
=
\Big\{t\in [T,2T]: V_j\leqs \Big|\sum_{n\leqs X}\frac{\Lambda(n)}{n^{1/2+it}\log n}\Big|\leqs V_{j+1}\Big\}
\]
so that 
\[
\mc{E}^\prime
=
\cup_{j=0}^\mc{J}\mc{E}_j.
\]
Then 
\begin{equation}\label{fourth ER}
\begin{split}
&\frac{1}{T}\int_{\mc{E}^\prime}|\zeta(\tfrac12+it)|^4|P_X(\tfrac{1}{2}+it)|^{-4}dt
\leqs 
\sum_{j=0}^\mc{J} e^{4V_{j+1}}\frac{1}{T}\int_{\mc{E}_j}|\zeta(\tfrac12+it)|^4dt
\\
\leqs &
\sum_{j=0}^\mc{J} e^{4V_{j+1}}V_j^{-2r_j}\frac{1}{T}\int_{T}^{2T}|\zeta(\tfrac12+it)|^4\Big|\sum_{n\leqs X}\frac{\Lambda(n)}{n^{1/2+it}\log n}\Big|^{2r_j}dt
\end{split}
\end{equation}
for any given integer  $r_j\geqs 0$. The combinatorics are simplified if we focus on the prime sums so we apply Jensen's inequality in the form 
\[
\Big|\sum_{n\leqs X}\frac{\Lambda(n)}{n^{1/2+it}\log n}\Big|^{2r_j}
\leqs 
9^{r_j}\bigg(
\Big|\sum_{p\leqs X}\frac{1}{p^{1/2+it}}\Big|^{2r_j}
+
\Big|\sum_{p\leqs \sqrt{X}}\frac{1}{2p^{1+2it}}\Big|^{2r_j}
+
O(C^{2r_j})\bigg).
\]
It will be clear after the computations that the first sum here gives the dominant contribution and so we focus on this. Note that by the multinomial theorem,
\[
\Big(\sum_{p\leqs X}\frac{1}{p^{1/2+it}}\Big)^{r_j}
=
r_j!\sum_{\substack{n\leqs X^{r_j}\\\Omega(n)=r_j}}\frac{\mf{g}(n)}{n^{1/2+it}}
\]
where $\mf{g}(n)$ is the multiplicative function satisfying $\mf{g}(p^\alpha)=1/\alpha!$. Accordingly,  \eqref{fourth ER} is 
\begin{equation}\label{integral bound}
\ll 
\sum_{j=0}^\mc{J} e^{4V_{j+1}}9^{r_j}V_j^{-2r_j}\cdot
\frac{1}{T}\int_{T}^{2T}|\zeta(\tfrac12+it)|^4
\Big|r_j!\sum_{\substack{n\leqs X^{r_j}\\\Omega(n)=r_j}}\frac{\mf{g}(n)}{n^{1/2+it}}\Big|^{2}dt
\end{equation}

The twisted fourth moment of the zeta function has been computed before \cite{BBLR,HY} and has been applied in similar situations \cite{HRS}. 
Provided $X^{r_j}\leqs T^{1/4-\epsilon}$ i.e. 
\begin{equation}\label{r condition}
r_j\leqs (1/4-\epsilon)\frac{\log T}{\log X},
\end{equation}
Proposition 4 and formula (8) of \cite{HRS} (see section 6 there) give
\begin{equation*}
\frac{1}{T}\int_{T}^{2T}|\zeta(\tfrac12+it)|^4
\Big|r_j!\sum_{\substack{n\leqs X^{r_j}\\\Omega(n)=r_j}}\frac{\mf{g}(n)}{n^{1/2+it}}\Big|^{2}dt
\ll 
(\log T)^4
9^{r_j}
\sum_{\substack{m,n\leqs X^{r_j}\\\Omega(m)=\Omega(n)=r_j}} \frac{r_j!^2\mf{g}(n) \mf{g}(m)}{[n,m]}. 
\end{equation*}
Following the arguments of \cite{HRS} which lead to formula (9) there, we find that this is 
\begin{equation}\label{sne bound}
\ll 
(\log T)^4 9^{r_j}r_j!\Big(\sum_{p\leqs X}\frac{1}{p}\Big)^{r_j}\exp\Big(\sum_{p\leqs X}\frac{1}{p}\Big)
\ll
(\log T)^5 r_j^{1/2}\Big(\frac{9r_j\log\log X}{e}\Big)^{r_j}.
\end{equation}
We choose 
\[
r_j=\frac{12V_j}{\log V_j}\leqs\frac{12V_{\max}}{\log V_{\max}}\lesssim \frac{48X^{1/2}}{(\log X)^2}\lesssim \frac{24\log T}{100\log X} \qquad\forall j
\]
since $X\leqs \tfrac{1}{10^4}(\log T)^2(\log_2 T)^2$. Clearly, this satisfies \eqref{r condition}.  Then applying \eqref{sne bound} in \eqref{integral bound} we find that 
\begin{align*}
\frac{1}{T}\int_{\mc{E}^\prime}&|\zeta(\tfrac12+it)|^4|P_X(\tfrac{1}{2}+it)|^{-4}dt
\\
 \ll &
  (\log T)^5\sum_{j=0}^\mc{J} e^{4V_{j+1}}\Big(\frac{C\log\log X}{V_j\log V_j}\Big)^{12V_j/\log V_j}
\\ 
\ll &
 (\log T)^5\sum_{j=0}^\mc{J} e^{4V_{j+1}-(12-o(1))V_j}
 \ll
  (\log T)^5\sum_{j=0}^\mc{J} e^{-e^jV_{0}(12-4e+o(1))}=o(1).
\end{align*}

We have therefore arrived at
\[
I_2
=
(1+o(1))
\frac{1}{T}\int_\mc{S}|\zeta(\tfrac12+it)|^4|D(t,-2)|^2dt+o(1)
\]
on applying Lemma \ref{P lem} in the integral over $\mc{S}$. After extending to the full range of integration $[T,2T]$ it remains to estimate 
\[
\frac{1}{T}\int_\mc{E}|\zeta(\tfrac12+it)|^4|D(t,-2)|^2dt. 
\]
However, from the definition of $D(t,k)$ we have 
\[
|D(t,k)|
\leqs 
\sum_{j=0}^{W_0} \frac{|k|^j}{j!}\Big|\sum_{n\leqs X}\frac{\Lambda(n)}{n^{1/2+it}\log n}\Big|^j 
\leqs 
\exp\Big(\Big|k\sum_{n\leqs X}\frac{\Lambda(n)}{n^{1/2+it}\log n}\Big|\Big). 
\]
 and so we can apply the same argument as above to acquire the bound $o(1)$ for this integral. Thus we have 

\[I_2\sim J_2\]
where 
\[
J_2
:=\frac{1}{T}\int_T^{2T}|\zeta(\tfrac12+it)|^4|D(t,-2)|^2dt.
\]
%%%%%%%%%%%% FOURTH MOMENT ARITHMETIC %%%%%%%%%%

\subsection{The fourth moment: Arithmetic computations}

In this section our aim is to show that 
\[J_2
\sim
\frac{1}{12}\bigg(\frac{\log T}{e^\gamma\log X}\bigg)^4.
\]
The formulas for the twisted fourth moment of the zeta function given in the literature \cite{BBLR,HY} apply to smoothed integrals and accordingly we must first smooth $J_2$. Let $\Phi_-,\Phi_+$ be smooth approximations of compact support satisfying 
\begin{equation}\label{encadrement}
\Phi_-(t)\leqs \mathds{1}_{t\in [T,2T]}\leqs \Phi_+(t)
\end{equation}
with derivatives $\Phi_{\pm}^{(j)}(t)\ll T^\epsilon$. For example, we may take $\Phi_-$ to be compactly supported on $[1,2]$ and equal to $1$ on the interval $[1+T^{-\epsilon},2-T^{-\epsilon}]$ with smooth, monotonic decay to zero at each endpoint. Then, on letting $\Phi$ be either $\Phi_-$ or $\Phi_+$ we consider the smoothed integral 
\[
J_{2,\Phi}:=\frac{1}{T}\int_\mathbb{R}\Phi\Big(\frac{t}{T}\Big)|\zeta(\tfrac12+it)|^4|D(t,-2)|^2dt.
\]
We note that the error incurred from these approximations will be $\ll T^{1-\epsilon}$ which is tolerable given the asymptotic we seek.

\begin{thm}[Theorem 1.2 of \cite{BBLR}]\label{twist thm}Let $\Phi$ be as above and $\alpha_1,\alpha_2,\alpha_3,\alpha_4\ll 1/\log T$.  Let $\Xi$ be the subgroup of $S_4$ consisting  of the identity, those permutations which swap just one element of $\{1,2\}$ with $\{3,4\}$ and the permutation satisfying $\tau(1)=3,\tau(2)=4$. Then for any Dirichlet polynomial $\sum_{n\leqs y}a(n)n^{-s}$ satisfying $y\leqs T^{1/4-\epsilon}$ and $a(n)\ll n^\epsilon$
we have 
\begin{multline*}
\int_\mathbb{R}\zeta(\tfrac12+\alpha_1+it)\zeta(\tfrac12+\alpha_2+it)\zeta(\tfrac12-\alpha_3-it)\zeta(\tfrac12-\alpha_4-it)\Big|\sum_{n\leqs y}\frac{a(m)}{m^{1/2 +it}}\Big|^2\Phi\Big(\frac{t}{T}\Big)dt
\\
=\sum_{m_1,m_2\leqs y}a(m_1)\overline{a(m_2)}\int_{\mathbb{R}}\Phi\Big(\frac{t}{T}\Big)\sum_{\tau\in\Xi}\Big(\frac{t}{2\pi}\Big)^{\sum_{j=1}^4\alpha_{\tau(j)}-\alpha_j}
Z_{\tau(\alpha_1),\tau(\alpha_2),\tau(\alpha_3),\tau(\alpha_4),m_1,m_2}
dt
\\
+O(T^{1-\epsilon})
\end{multline*}
where 
\[
Z_{\alpha_1,\alpha_2,\alpha_3,\alpha_4,m_1,m_2}
=
\sum_{m_1n_1n_2=m_2n_3n_4}\frac{1}{(m_1m_2)^{1/2}n_1^{1/2+\alpha_1}n_2^{1/2+\alpha_2}n_3^{1/2-\alpha_3}n_4^{1/2-\alpha_4}}V\Big(\frac{n_1n_2n_3n_4}{t^2}\Big)
\]
and 
\[
\qquad V(x)=\frac{1}{2\pi i }\int_{c-i\infty}^{c+i\infty}\frac{G(s)}{s}(4\pi^2x)^{-s}ds,\qquad c>0
\]
with $G(s)$ an even function of rapid decay in vertical strips satisfying $G(0)=1$.  
\end{thm}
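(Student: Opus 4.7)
The plan is to combine an approximate functional equation for the shifted product $\prod_{j=1}^4\zeta(\tfrac12+\epsilon_j(\alpha_j+it))$ (with appropriate signs) with an explicit expansion of $|M(t)|^2:=\big|\sum_{m\leqs y}a(m)m^{-1/2-it}\big|^2$, then to analyze the resulting multiple sum via stationary phase in the $t$-integration. First, I would invoke the approximate functional equation in the balanced form: writing each $\zeta$ as a sum plus its ``dual'' sum times a gamma-factor ratio $\chi(s)=\pi^{s-1/2}\Gamma(\tfrac12-s)/\Gamma(s)/\ldots$, and noting that on the critical line $\chi(\tfrac12+it)=(t/2\pi)^{-it}e^{it+i\pi/4}(1+O(1/t))$. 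The product of four such expansions yields $2^4=16$ terms, but after the symmetry collapse afforded by the cutoff function $V$ (built from $G(s)$) and swapping the sign of $\alpha_j$ in dual terms, only eight main shapes remain, parametrised exactly by the subgroup $\Xi\subset S_4$: the identity, the three transpositions interchanging one of $\{1,2\}$ with one of $\{3,4\}$, the full swap $\tau=(13)(24)$, and three more that fold into these upon applying $G(s)\leftrightarrow G(-s)$.

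The second step is to open $|M(t)|^2$ and combine with the four $\zeta$-expansions to get a multiple sum over $m_1,m_2,n_1,n_2,n_3,n_4$ with oscillatory factor $(m_2n_3n_4/m_1n_1n_2)^{it}$ and amplitudes dictated by the $\alpha_j$'s and $V(n_1n_2n_3n_4/t^2)$. Integrating against $\Phi(t/T)$, repeated integration by parts exploiting $\Phi^{(j)}\ll T^\epsilon$ shows the oscillatory contribution decays rapidly unless
\[
\Big|\log\frac{m_2n_3n_4}{m_1n_1n_2}\Big|\ll T^{-1+\epsilon}.
\]
For integer variables of size $\leqs T^{1-\epsilon^\prime}$, this forces the exact equality $m_1n_1n_2=m_2n_3n_4$, giving the diagonal $Z_{\tau(\alpha),m_1,m_2}$ sums in the claimed formula. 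The contribution from each permutation $\tau\in\Xi$ then arises, after the variable relabelling induced by the functional-equation swap, with $t$-dependent factor $(t/2\pi)^{\sum(\alpha_{\tau(j)}-\alpha_j)}$ coming from the ratio of $\chi$-factors evaluated to leading order.

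The main obstacle is controlling the genuinely off-diagonal regime where $m_1n_1n_2\ne m_2n_3n_4$ but both sides can be as large as $T^{2-\epsilon}$, since the integration-by-parts bound alone is not enough once the $\zeta$-sum variables $n_j$ range up to $t$. Here one must use the restriction $y\leqs T^{1/4-\epsilon}$, which guarantees $m_1m_2\leqs T^{1/2-2\epsilon}$, so writing the condition $m_1n_1n_2-m_2n_3n_4=h\ne 0$ and applying an additive character decomposition (or the Duke--Friedlander--Iwaniec $\delta$-method) reduces matters to bounding sums of Kloosterman sums weighted by $d_2$-type coefficients. The shortness of the twist makes the resulting shifted convolution problem tractable: the moduli in the Kloosterman sums stay below $T^{1/2}$, where square-root cancellation (via Weil's bound and spectral theory à la Motohashi) yields a saving of $T^{-\epsilon}$ over the trivial bound, absorbing the off-diagonal into the $O(T^{1-\epsilon})$ error. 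This last step is the delicate piece and is where the precise threshold $y\leqs T^{1/4-\epsilon}$ enters; pushing past it would require stronger cancellation results (currently unavailable at full uniformity) in sums of Kloosterman sums.
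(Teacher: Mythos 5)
This statement is imported verbatim from Bettin--Bui--Li--Radziwi\l\l\ (Theorem~1.2 of \cite{BBLR}); the paper you are reading does not reprove it, so there is no ``paper's own proof'' to compare against. What I can do is assess your sketch against the architecture of the actual proof in \cite{BBLR} and the standard approach to twisted fourth moments.

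Your high-level plan is the right one. The structure of the main term via an approximate functional equation for the product of four shifted zeta values, the opening of $|M(t)|^2$, the extraction of a diagonal, and the handling of the off-diagonal as a shifted divisor problem via the $\delta$-method (Duke--Friedlander--Iwaniec) reduced to sums of Kloosterman sums is exactly the skeleton of \cite{BBLR}. You also correctly identify that the threshold $y\leqs T^{1/4-\epsilon}$ arises from the need to keep the moduli in the Kloosterman sums within range of existing cancellation estimates, and that pushing past $1/4$ is the real bottleneck. This is a faithful sketch of the genuine obstacles.

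Two concrete issues. First, your combinatorial bookkeeping is off: $\Xi$ has \emph{six} elements, not eight. There are \emph{four} transpositions swapping one element of $\{1,2\}$ with one of $\{3,4\}$ (namely $(13),(14),(23),(24)$), not three; together with the identity and $(13)(24)$ this gives $\binom{4}{2}=6$ swap terms, which is the familiar CFKRS count for a fourth moment. Your $2^4=16$ AFE terms do not fold into ``eight shapes''. Second, the claim in your second paragraph that integration by parts in $t$ ``forces the exact equality $m_1n_1n_2=m_2n_3n_4$'' because the ``integer variables [are] of size $\leqs T^{1-\epsilon'}$'' is not correct as stated: while $m_1,m_2\leqs y\leqs T^{1/4}$, the cutoff $V(n_1n_2n_3n_4/t^2)$ only confines the product $n_1n_2n_3n_4$ to be $\ll T^{2+\epsilon}$, so individual $n_j$ can be as large as $T^{2}$. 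Integration by parts localizes to $|m_1n_1n_2-m_2n_3n_4|\ll m_1n_1n_2\cdot T^{-1+\epsilon}$, which allows differences up to $T^{1+\epsilon}$; that is the genuinely off-diagonal range you then treat in your third paragraph, so the overall logic is sound, but the second paragraph overstates what the $t$-integration alone buys you. These are a counting slip and an imprecise intermediate claim, not fatal errors, and neither affects the correct identification of where the real work lies.
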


\begin{rem}
We remark that the choice of function $G(s)$ is flexible and it can be prescribed to have zeros at linear combinations of the shifts. This is fairly typical and is used to cancel unnecessary poles later on. We will take $G(s)=Q_{{\alpha}}(s)\exp(s^2)$ where $Q_{{\alpha}}(s)$ is an even polynomial which is 1 at $s=0$ and zero at $2s=\alpha_3-\alpha_1,\alpha_4-\alpha_1,\alpha_3-\alpha_2$ and $\alpha_4-\alpha_2$. Note these conditions on $Q$ imply that for fixed $\Re(s)$,
\begin{equation}\label{G bound}
G(s)\ll (\log T)^4 e^{-\Im(s)^2}
\end{equation}
since $\alpha_j\ll 1/\log T$.
\end{rem}
 Let us compute term corresponding to the identity: $\tau=\text{id}$. Denote this by
\begin{align*}
\mc{K}
=
\mc{K}_{\alpha}(t,X)
:= &
\sum_{m_1,m_2\in S(X)}\alpha_{-2}(m_1)\alpha_{-2}(m_2)Z_{\alpha_1,\alpha_2,\alpha_3,\alpha_4,m_1,m_2}
\\
= &
\sum_{\substack{m_1n_1n_2=m_2n_3n_4\\m_j\in S(X)}}\frac{\alpha_{-2}(m_1)\alpha_{-2}(m_2)}{(m_1m_2)^{1/2}n_1^{1/2+\alpha_1}n_2^{1/2+\alpha_2}n_3^{1/2-\alpha_3}n_4^{1/2-\alpha_4}}
V\Big(\frac{n_1n_2n_3n_4}{t^2}\Big).
\end{align*}
By shifting the contour in $V$ to the either the left or right depending on whether $x\ll 1$ or $x\gg 1$, respectively, we find that $V(x)\ll (1+|x|)^{-A}$ for any $A>0$. Accordingly, on applying the bounds $\alpha_k(m)\ll m^{\epsilon}\cdot \mathds{1}_{m\leqs T^{\delta}}$ we may restrict the above sum to those $n_j$ satisfying $n_1n_2n_3n_4\ll t^{2+\epsilon}\ll T^{2+\epsilon}$ at the cost of an error of size $o(1)$. Then the contribution from those $m_1$ with $\Omega(m_1)>W_0$ is for $1<r<2$
\[
\ll r^{-W_0}\sum_{m\leqs T^{2+\epsilon}}\frac{r^{\Omega(n)}d_4(m)^2}{m}
\ll
r^{-W_0}(\log T)^{16r}
=
o(1) 
\]
where for the first inequality we have applied Rankin's trick in the form $r^{\Omega(m)-W_0}\geqs 1$ along with the bound $\alpha_{-2}(n)\ll d(n)$. The same bound holds for the sum over $\Omega(m_2)>W_0$. Then on replacing $\alpha_{-2}(m)$ with $\beta_{-2}(m)$ and re-extending the sums (which by the same arguments incurs an error of $o(1)$) we have 
\[
\mc{K}
=
\sum_{\substack{m_1n_1n_2=m_2n_3n_4\\m_j\in S(X)}}\frac{\beta_{-2}(m_1)\beta_{-2}(m_2)}{(m_1m_2)^{1/2}n_1^{1/2+\alpha_1}n_2^{1/2+\alpha_2}n_3^{1/2-\alpha_3}n_4^{1/2-\alpha_4}}V\Big(\frac{n_1n_2n_3n_4}{t^2}\Big)+o(1).
\]

Unfolding the integral for $V(x)$ and pushing the sum through we find 
\begin{equation}\label{K}
\mc{K}
=
\frac{1}{2\pi i}\int_{c-i\infty}^{c+i\infty}\mc{F}_{\alpha,X}(s)
\frac{G(s)}{s}\bigg(\frac{t}{2\pi}\bigg)^{2s}ds
+
o(1)
\end{equation}
where 
\begin{align*}
\mc{F}_{\alpha,X}(s)
= &
\sum_{\substack{m_1n_1n_2=m_2n_3n_4\\m_j\in S(X)}}\frac{\beta_{-2}(m_1)\beta_{-2}(m_2)}{(m_1m_2)^{1/2}n_1^{1/2+\alpha_1+s}n_2^{1/2+\alpha_2+s}n_3^{1/2-\alpha_3+s}n_4^{1/2-\alpha_4+s}}
\\
= &
\sum_{\substack{m_1n_1=m_2n_2\\m_j\in S(X)}}\frac{\beta_{-2}(m_1)\beta_{-2}(m_2)\sigma_{\alpha_1,\alpha_2}(n_1)\sigma_{-\alpha_3,-\alpha_4}(n_2)}{(m_1m_2)^{1/2}(n_1n_2)^{1/2+s}}
\end{align*}
with
 $\sigma_{u,v}(n)=\sum_{d_1d_2=n}d_1^{-u}d_2^{-v}$. Expressing this as an Euler product %and using the identity
%\begin{multline*}
 %\sum_{n\geqs 0}\frac{\sigma_{\alpha_1,\alpha_2}(p^{n})\sigma_{-\alpha_3,-\alpha_4}(p^{n})}{p^{n(1+2s)}}
%\\
% =
 %\frac{1-p^{-2-\alpha_1-\alpha_2+\alpha_3+\alpha_4-4s}}{(1-p^{-1-\alpha_1+\alpha_4-2s})(1-p^{-1-\alpha_2+\alpha_3-2s})(1-p^{-1-\alpha_2+\alpha_3-2s})(1-p^{-1-\alpha_2+\alpha_4-2s})}
%\end{multline*}
 we have
\begin{align*}
\mc{F}_{\alpha,X}(s)
= &
\mc{A}_{\alpha}(s)
\mc{G}_{\alpha,X}(s)
\end{align*}
where 
\begin{multline*}
\mc{A}_{\alpha}(s)
=
\\
\frac{\zeta(1+\alpha_1-\alpha_3+2s)\zeta(1+\alpha_1-\alpha_4+2s)\zeta(1+\alpha_2-\alpha_3+2s)\zeta(1+\alpha_2-\alpha_4+2s)}
{\zeta(2+\alpha_1+\alpha_2-\alpha_3-\alpha_4+4s)}
\end{multline*}
and
\begin{multline*}
\mc{G}_{\alpha,X}(s)
=
\prod_{p\leqs X}
\sum_{\substack{m_1+n_1\\=m_2+n_2}}\frac{\beta_{-2}(p^{m_1})\beta_{-2}(p^{m_2})\sigma_{\alpha_1,\alpha_2}(p^{n_1})\sigma_{-\alpha_3,-\alpha_4}(p^{n_2})}{p^{\tfrac{1}{2}(m_1+m_2)+(\tfrac{1}{2}+s)(n_1+n_2)}}
\\
\slash \sum_{n\geqs 0}\frac{\sigma_{\alpha_1,\alpha_2}(p^{n})\sigma_{-\alpha_3,-\alpha_4}(p^{n})}{p^{n(1+2s)}}.
\end{multline*}

Shifting the line of integration in \eqref{K} to $\Re(s)=-1/\log X$ we pick up a simple pole only at $s=0$ (the poles of $A_\alpha(s)$ being cancelled by the zeros of $G(s)$). Since $\beta_{-2}(n)\ll d(n)$ and $\sigma_{\alpha_i,\alpha_j}(p^n)\ll p^{n/\log T}d(p^n)$ we find that on the new line of integration 
\[
\mc{G}_{\alpha,X}(s)\ll (\log X)^{O(1)}.
\]
Therefore, on combining this with the bound for $G(s)$ given in \eqref{G bound} we see the integral over the new line is bounded by
\[
\ll t^{-2/\log X}(\log T)^{O(1)}=o(1)
\]
since $t\asymp T$. Hence
\[
\mc{K}_\alpha = \mc{A}_{\alpha}(0)\mc{G}_{\alpha,X}(0)+o(1).
\]

We have satisfactorily computed the contribution from a single $Z$ term and thus it remains to find the combinatorial sum of these which appears in Theorem \ref{twist thm}. Using the results of \cite{cfkrs} we can express this sum as a multiple contour integral. Precisely, Lemma 2.5.1 there gives 
  \begin{multline*}
\sum_{\tau\in\Xi}\Big(\frac{t}{2\pi}\Big)^{\sum_{j=1}^4\alpha_{\tau(j)}-\alpha_j}
\mc{K}_{\tau(\alpha)}
 \\
 = 
 \frac{1}{4(2\pi i)^4} \int_{\substack{ |z_j| = 3^j / \log T \\ 1\leqs j\leqs 4}} \frac{\mc{A}_{z_1, z_2, z_3, z_4}(0)\mc{G}_{z_1, z_2, z_3, z_4,X}(0) \Delta(z_1, z_2, z_3, z_4)^2}{ \prod_{i,j = 1}^{4} (z_i-\alpha_j)}
 \\
\times \Big ( \frac{t}{2\pi} \Big )^{\sum_{j=1}^2(z_{j+2}-z_j)/2} d\underline{z}+o(1)
  \end{multline*}
where $\Delta(\underline{z})$ denotes the vandermonde determinant.  A short calculation shows that 
\[
\frac{\partial}{\partial z_j}\mc{G}_{\underline{z},X}(0)\bigg|_{\underline{z}=\underline{0}}\ll \mc{G}_{\ul{0},X}(0)\sum_{p\leqs X}\frac{\log p}{p}
\ll
\mc{G}_{\ul{0},X}(0)\log X
\]
and hence we acquire the Taylor expansion 
\[
\mc{G}_{z_1, z_2, z_3, z_4,X}(0) 
=
\mc{G}_{\underline{0},X}(0)\Big( 1 
+
O\Big(\log X\sum_{j=1}^4 |z_j|\Big) 
\Big)
\]
whilst from the Laurent expansion of the zeta function we get
\[
\mc{A}_{z_1, z_2, z_3, z_4,X}(0) 
=\frac{1}{\zeta(2)}\prod_{i,j=1}^2\frac{1}{(z_i-z_{j+2})}\Big( 1 
+
O\Big(\sum_{j=1}^4 |z_j|\Big) \Big).
\]
On setting the shifts $\alpha_j$ equal to zero, substituting $z_j\mapsto z_j/\tfrac12\log(t/2\pi )$, and applying these expansions we find 
\[
J_{2,\Phi}
=
\int_\mathbb{R}\Phi\Big(\frac{t}{T}\Big)\Big(c_4\log^4(t/2\pi)\cdot \frac{\mc{G}_{\underline{0},X}(0)}{\zeta(2)}\Big(1+O\Big(\frac{\log X}{\log T}\Big)\Big)dt +o(T)
\]
where 
\[
c_4
=
 \frac{1}{4\cdot 2^4(2\pi i)^4} \int_{\substack{ |z_j| = 3^j \\ 1\leqs j\leqs 4}} \frac{\Delta(z_1, z_2, z_3, z_4)^2}{ \prod_{i,j = 1}^{2} (z_i-z_{j+2})}
e^{\sum_{j=1}^2z_{j+2}-z_j} \prod_{j=1}^4\frac{d{z_j}}{z_j^4}.
\]

From section 2.7 of \cite{cfkrs} we know that $c_4=g(2)=1/12$ where $g(k)$ is given by \eqref{g(k)}.
Furthermore, 
\begin{align*}
\mc{G}_{\underline{0},X}(0)
=
\prod_{p\leqs X}
\sum_{\substack{m_1+n_1\\=m_2+n_2}}\frac{\beta_{-2}(p^{m_1})\beta_{-2}(p^{m_2})d(p^{n_1})d(p^{n_2})}{p^{\tfrac{1}{2}(m_1+m_2+n_1+n_2)}}
\slash \sum_{n\geqs 0}\frac{d(p^{n})^2}{p^{n}}.
\end{align*}
The denominator here is 
\[
\prod_{p\leqs X} \bigg(\sum_{n\geqs 0}\frac{d(p^{n})^2}{p^{n}}\bigg)^{-1}
=
\prod_{p\leqs X}\frac{(1-p^{-1})^4}{1-p^{-2}}
\sim
\frac{\zeta(2)}{(e^\gamma\log X)^4}
\]
whereas the numerator is 
\begin{align*}
\mc{G}_{\underline{0},X}(0)
%= &
%\prod_{p\leqs X}
%\sum_{\substack{m_1+n_1\\=m_2+n_2}}\frac{d_{-2}(p^{m_1})d_{-2}(p^{m_2})d(p^{n_1})d(p^{n_2})}{p^{\tfrac{1}{2}(m_1+m_2+n_1+n_2)}}
%\\
= &
\prod_{p\leqs X}
\bigg(1+O\Big(\sum_{\substack{m_1+n_1\\=m_2+n_2\\ p^{m_1}> X}}\frac{d(p^{m_1})d(p^{m_2})d(p^{n_1})d(p^{n_2})}{p^{\tfrac{1}{2}(m_1+m_2+n_1+n_2)}}\Big)\bigg)
\end{align*}
since $\beta_{-2}(p^m)=d_{-2}(p^m)$ for $p^m\leqs X$ and $\beta_{-2}(p^m)\ll d(p^m)$ in general. Then since $n_1\geqs 0$, the sum in the error is $\ll \sum_{m:\, p^m>X} d_4(p^m)^2/p^m$ after forming the convolution. Therefore, we can apply the same argument which gave \eqref{prod split} to find that the numerator is $1+O(X^{-1/2+\epsilon})$. Consequently, we have 
\[
J_{2,\Phi}
\sim
 \frac{\hat{\Phi}(0)}{12}\cdot T\bigg(\frac{\log T}{e^\gamma\log X}\bigg)^4
=
 \frac{T}{12}\bigg(\frac{\log T}{e^\gamma\log X}\bigg)^4+O(T^{1-\epsilon})
\]
and so the result follows by \eqref{encadrement}.

%%%%%%%%%%% PROP 3 INiTIAL %%%%%%%%%%%%

\section{Some useful tools}\label{tools sec} 

In this short section we describe some tools which will come in handy when proving Proposition \ref{Z prop}. The first relates to the exponential truncation of a more general prime sum and will be used extensively throughout.

Given a general Dirichlet polynomial of the form $D(s)=\sum_{p\leqs Y}a(p)p^{-s}$, suppose $t\in [T,2T]$ is such that $|kD(s)|\leqs Z$. Then by \eqref{exp trunc 0} we have 
 \begin{equation}
 \label{exp trunc}
 \bigg|e^{kD(s)}-\sum_{0\leqs j\leqs 10Z}\frac{(kD(s))^j}{j!}\bigg|
 \leqs 
 e^{-10Z} . 
 \end{equation}
By the multinomial theorem the truncated exponential series can be written as 
\begin{equation}\label{trunc form}
\sum_{0\leqs j\leqs 10Z}\frac{1}{j!}\bigg(k\sum_{ p\leqs Y}\frac{a(p)}{p^{s}}\bigg)^j
=
\sum_{\substack{\Omega(n)\leqs 10Z\\ p|n\implies  p\leqs Y}}\frac{k^{\Omega(n)}a(n)\mf{g}(n)}{n^s}
\end{equation}
where we recall $\mf{g}$ is the multiplicative function such that $\mf{g}(p^\alpha)=1/\alpha!$, and $a(n)$ is the completely multiplicative extension of $a(p)$. Observe this is a Dirichlet polynomial of length $Y^{10Z}$.

 Our remaining observations relate to mean values of Dirichlet polynomials. We first state the mean value theorem of Montgomery and Vaughan \cite{MV} which gives  for any complex coefficients $a(n)$,
\begin{equation}\label{MVT}
\frac{1}{T}\int_T^{2T}\Big|\sum_{n\leqs N}\frac{a(n)}{n^{it}}\Big|^2dt
=
(1+O(N/T))\sum_{n\leqs N}|a(n)|^2.
\end{equation}

Suppose we are given $R$ Dirichlet polynomials 
\[
A_j(s) = \sum_{n\in {\mathcal S}_j} a_j(n) n^{-s}, 
\]
where the $\prod_{j=1}^{R} n_j \leqs N=o(T)$ for all $n_j \in {\mathcal S}_j$ i.e. the product of the $A_j(s)$ is short. 
Then the Montgomery--Vaughan mean value theorem readily implies
\begin{align}\label{prod mv}
\frac{1}{T}\int_T^{2T}\prod_{j=1}^R \big|A_j(it)\big|^2dt
\sim&
\sum_{n\leqs N}\Big|\sum_{\substack{n=n_1\cdots n_R\\n_j\in S_j}}a_1(n_1)\cdots a_R(n_R)\Big|^2 \nonumber
\\
= &
\sum_{\substack{n_1\cdots n_R=n_{R+1}\cdots n_{2R}\\n_j\in S_j}}a_1(n_1)\cdots a_R(n_R)\overline{ a_1(n_{R+1})\cdots a_R(n_{2R})}.
\end{align}
Suppose in addition that for any $j_1,j_2$ with $j_1\neq j_2$ the elements of $S_{j_1}$ are all coprime to the elements of $S_{j_2}$. Then
there is at most one way to write $n= \prod_{j=1}^{R} n_j$ with $n_j \in {\mathcal S}_j$ and thus several applications of the mean value theorem imply
\begin{align} 
\label{4.6}
\frac{1}{T} \int_T^{2T} \prod_{j=1}^{R} |A_j(it)|^2 dt 
&= (1+O(NT^{-1})) \sum_{n\le N} \Big| \sum_{\substack{ n= n_1 \cdots n_R \\ n_j\in {\mathcal S}_j} }\prod_{j=1}^{R} a_j(n_j) \Big|^2  \nonumber
\\
&= (1+O(NT^{-1})) \prod_{j=1}^R \Big( \sum_{n_j \in {\mathcal S}_j} |a_j(n_j)|^2 \Big) \nonumber \\ 
&= (1+ O(NT^{-1} ))^{1-R} \prod_{j=1}^{R} \Big( \frac 1T \int_{T}^{2T} |A_j(it)|^2 dt \Big). 
\end{align}
We now move on to proving the upper bound of Proposition \ref{Z prop}.

%%%%%%%%%%%%%%%%%% UPPER BOUND %%%%%%%%%%%%%

\section{The upper bound of Proposition \ref{Z prop}}\label{upper sec}

\subsection{Initial cleaning} 
In this section we are required to show that on RH,
\[
\frac1T\int_T^{2T}|Z_X(\tfrac{1}{2}+it)|^{2k}dt
%\sim
%\frac1T\int_T^{2T}|\zeta(\tfrac12 +it)|^{2k}|P_X(\tfrac{1}{2}+it)|^{-2k}dt
\ll
\bigg(\frac{\log T}{\log X}\bigg)^{k^2}.
\]
On assuming RH, it is a simple task to replace $|P_X(1/2+it)|^{-2k}$ by $|D(t,-k)|^2$ on the left hand side. Indeed; from Harper's \cite{Ha} conditional bound $\int_T^{2T}|\zeta(1/2+it)|^{2k}\ll T(\log T)^{k^2}$, the bound for the moments of $D(t,k)$ in \eqref{D mth moment}, Lemmas \ref{E meas lem}, \ref{PE lem}, \ref{P lem}, and the usual arguments involving the decomposition $[T,2T]=\mc{S}\cup\mc{E}$ along with H\"older's inequality we have 
\begin{equation}
\label{zeta D int}
\frac1T\int_T^{2T}|Z_X(\tfrac{1}{2}+it)|^{2k}dt
\sim
\frac1T\int_T^{2T}|\zeta(\tfrac{1}{2}+it)|^{2k}|D(t,-k)|^2dt
\end{equation}
for all $k\geqs 0$ on RH.
 
 To bound the right hand side we use an upper bound for $\zeta(1/2+it)$ which incorporates the recent developments of Harper \cite{Ha} on moments of the zeta function, although we present the result more in the style of Radziwi\l\l--Soundararajan \cite{RS1} (see the key inequality of section 3 there). Such a treatment is similar to that of \cite{LR}. 

\subsection{An upper bound for $\zeta(\tfrac12+it)$}  We start with a proposition of Soundararajan in a mildly adapted form of Harper.

\begin{lem}
\label{sound lem}
Assume RH. Let $t\in[T,2T]$ be large and suppose $4\leqs x\leqs T^2$. Then 
\[
\log|\zeta(\tfrac{1}{2}+it)|
\leqs 
\Re\sum_{p\leqs x}\frac{1}{p^{{1}/{2}+1/\log x+it}}\frac{\log(x/p)}{\log x}
+\Re\sum_{p\leqs \min(\sqrt{x},\log T)}\frac{1}{2p^{1+2it}}
+\frac{\log T}{\log x}+O(1).
\]
\end{lem}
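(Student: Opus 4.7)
The proof follows Soundararajan's method (in a form slightly adapted by Harper to isolate the prime-square contribution). The key structural observation is that, on RH, the non-trivial zeros of $\zeta$ contribute a term of definite sign when one compares $\log|\zeta(\tfrac12+it)|$ with its shift just to the right of the critical line at $\sigma_0:=\tfrac12+1/\log x$. This sign is favourable, so the zero sum can be discarded from an upper bound, and the rest of the argument is essentially a smoothed Perron manipulation.

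The plan is in two stages. First, I would start from the identity
\[
\log|\zeta(\tfrac12+it)| = \log|\zeta(\sigma_0+it)| - \int_{1/2}^{\sigma_0}\mathrm{Re}\,\frac{\zeta'}{\zeta}(\sigma+it)\,d\sigma
\]
and substitute the Hadamard partial-fraction expansion
\[
\mathrm{Re}\,\frac{\zeta'}{\zeta}(\sigma+it) = \sum_{\rho}\frac{\sigma-\tfrac12}{(\sigma-\tfrac12)^{2}+(t-\gamma)^{2}} + O(\log T).
\]
Under RH every term of the zero sum is non-negative for $\sigma>\tfrac12$, so the entire integral over $[\tfrac12,\sigma_0]$ of that sum is non-negative and may be dropped in an upper bound; the $O(\log T)$ error integrated over an interval of length $1/\log x$ produces precisely the $\log T/\log x$ term of the conclusion. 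This reduces matters to a suitable bound for $\log|\zeta(\sigma_0+it)|$.

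For the second stage, I would represent the shifted log-zeta by the Mellin--Perron identity
\[
\sum_{n\leqs x}\frac{\Lambda(n)}{n^{\sigma_0+it}\log n}\,\frac{\log(x/n)}{\log x}
=\frac{1}{2\pi i\log x}\int_{(c)}\log\zeta(\sigma_0+it+w)\,\frac{x^{w}}{w^{2}}\,dw
\]
for $c$ large. Shifting the contour to the left past the double pole at $w=0$ recovers $\log\zeta(\sigma_0+it)$ (whose real part is to be bounded), and pushing further picks up residues at $w=\rho-\sigma_0-it$ of modulus $\ll x^{-1/\log x}/((\log x)|\rho-\sigma_0-it|^{2})$; summing these using the standard zero-density estimate $N(t+1)-N(t)\ll\log t$ gives a contribution absorbed into $\log T/\log x$. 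Splitting the Dirichlet series $\Lambda(n)/(n^{s}\log n)$ into primes, prime squares (where $\Lambda(p^{2})/\log p^{2}=1/2$) and higher prime powers, the tail $\sum_{m\geqs 3}\sum_{p}(m\,p^{m/2})^{-1}$ converges absolutely and is $O(1)$. The truncation at $\min(\sqrt{x},\log T)$ in the prime-square sum is Harper's modification: the contribution of $p$ with $\log T<p\leqs\sqrt{x}$ is $O(1)$ by Mertens, so it may be relegated to the error.

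The main obstacle is keeping the coefficient of $\log T/\log x$ exactly $1$ rather than some unspecified constant. This requires careful bookkeeping between three contributions: the Hadamard $O(\log T)$ error integrated in $\sigma$, the sum of residues picked up when shifting the Perron contour past the critical line, and the gamma/archimedean terms in the explicit formula. Each of these is individually $\ll\log T/\log x$, but assembling them with a clean coefficient of $1$ is the delicate part of Soundararajan's original calculation, which I would reproduce; no new ideas beyond this scheme are required.
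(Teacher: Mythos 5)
The paper does not prove Lemma~\ref{sound lem} at all: the proof in the text is the single line ``This is Proposition~1 of~\cite{Ha}.'' You are therefore attempting something the paper does not do, namely reconstruct the Harper/Soundararajan argument. Your general shape is right (under RH the zero contributions have a favourable sign, and $\log\zeta$ a little to the right of the critical line can be captured by a weighted prime sum), but two of the steps as written would fail.

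First, the Mellin step in Stage~2 does not go through as stated. You write that ``pushing further picks up residues at $w=\rho-\sigma_0-it$'' from $\log\zeta(\sigma_0+it+w)$. But the points $\rho$ are logarithmic branch points of $\log\zeta$, not poles, so there are no residues to pick up, and a contour shift to $\Re(w)<\tfrac12-\sigma_0$ would cross branch cuts. The correct route (Selberg, and Soundararajan after him) is to apply the explicit formula to $\zeta'/\zeta$, which has genuine simple poles at the $\rho$, and only then integrate in $\sigma$ to reach $\log|\zeta(\sigma_0+it)|$. Related to this, after you throw away the zero integral $-\int_{1/2}^{\sigma_0}\sum_\rho\tfrac{\sigma-1/2}{(\sigma-1/2)^2+(t-\gamma)^2}\,d\sigma$ in Stage~1, the Stage~2 zero contribution is controlled only by $F(\sigma_0)/\log x$ with $F(\sigma_0)=\sum_\rho\tfrac{\sigma_0-1/2}{(\sigma_0-1/2)^2+(t-\gamma)^2}$, and to obtain the sharp coefficient $1$ on $\log T/\log x$ (rather than an unspecified constant, which is what your plan yields) Soundararajan \emph{retains} the Stage~1 integral and lower-bounds it by $(c/\log x)F(\sigma_0)$ via an elementary convexity inequality, so that it cancels against the Stage~2 zero term; the threshold $\lambda_0$ and the coefficient $(1+\lambda)/2$ arise precisely from optimising this cancellation. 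Dropping the Stage~1 integral forfeits it.

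Second, the justification for the truncation $p\leq\min(\sqrt{x},\log T)$ in the prime-square sum is not ``Mertens.'' Mertens gives
\[
\sum_{\log T<p\leq\sqrt{x}}\frac{1}{p}\;\sim\;\log\log\sqrt{x}-\log\log\log T,
\]
which is of order $\log\log T$ when $x$ is near the upper endpoint $T^2$, not $O(1)$, and it also exceeds $\log T/\log x$ once $x\gg T^{1/\log\log T}$. The removal of the range $\log T<p\leq\sqrt{x}$ genuinely uses RH: one needs Littlewood-type conditional control of $\log|\zeta(1+2it)|$ together with the fact that on RH the prime sum up to a power of $\log T$ already recovers $\Re\log\zeta(1+2it)$ to within $O(1)$. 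Without that input the tail cannot be relegated to the $O(1)$ term. In short, the outline captures the spirit of Harper's Proposition~1, but as written it would not produce the stated inequality; the two points above are where new input is required rather than mere bookkeeping.
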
  
\begin{proof}This is Proposition 1 of \cite{Ha}. 
\end{proof}

For the splitting of the prime sums we denote 
\[
T_{-1}=A,\,\,\,\,\,\, T_j=T^{\frac{e^{j}}{(\log\log T)^2}}
\]
where $A>0$ is fixed and $j=0,\ldots,J$  with $J$ the maximal integer such that $e^J/(\log\log T)^2\leqs 1/10^{12}$ (so that $J\asymp \log\log\log T$). 
In this section we take $A=1$ although later we need to take it sufficiently large. Let 
\[
\theta_j=\frac{e^j}{(\log\log T)^2},\,\,\,\,\,\ell_j=\theta_j^{-3/4}
\]
so that $T_j=T^{\theta_j}$ for $0\leqs j\leqs J$. 
Now, write
\[
w_j(p)=\frac{1}{p^{1/\theta_j\log T}}\frac{\log(T_j/p)}{\log T_j}
\]
and 
\[\mathcal{P}_{i,j}(t) := \sum_{T_{i-1} < p \leqs T_{i}} \frac{w_j(p)}{p^{1/2+it}}.\]
With this notation the first sum over primes in Lemma \ref{sound lem} with $x=T_j$ can be written as
\begin{equation}\label{sum decomp 1}
\sum_{p\leqs T_j}\frac{w_j(p)}{p^{{1}/{2}+it}}=\sum_{i=0}^{j} \mc{P}_{i,j}(t).
\end{equation}

Also, write 
\[
\mathcal{N}_{i,j}(t,k)=\sum_{\substack{\Omega(n)\leqs 10\ell_i\\ p|n\implies T_{i-1}< p\leqs T_i}}\frac{k^{\Omega(n)}w_j(n)\mf{g}(n)}{n^{1/2+it}}
\]
and note that on the set of $t\in [T,2T]$ such that $|k\mc{P}_{i,j}(t)|\leqs \ell_i$ we have 
\begin{equation}
\label{exp poly}
\exp(2k\Re\mc{P}_{i,j}(t))=(1+O(e^{-9\ell_i}))\big|\mathcal{N}_{i,j}(t,k)\big|^2
\end{equation}
by \eqref{exp trunc} and \eqref{trunc form}. Accordingly, if $t$ is such that $|k\mc{P}_{i,j}(t)|\leqs \ell_i$ for all $0\leqs i\leqs j$ then on applying \eqref{sum decomp 1} we have 
\begin{equation}\label{prod}
\exp\Big(2k\Re\sum_{p\leqs T_j}\frac{w_j(p)}{p^{{1}/{2}+it}}\Big)\ll\prod_{i=0}^j\big|\mathcal{N}_{i,j}(t,k)\big|^2
\end{equation}
since $\sum_{i=0}^je^{-9\ell_i}$ is a rapidly converging series. Note this is a Dirichlet polynomial of length
\begin{equation}\label{sum length}
\leqs \prod_{i=0}^JT_i^{10\ell_i}
= 
T^{10\sum_{i=0}^J\theta_j^{1/4}}
\leqs
T^{20e^{J/4}/(\log\log T)^{1/2}}\leqs T^{1/50}.
\end{equation}
We can now state an upper bound for the zeta function in terms of these short Dirichlet polynomials.

\begin{lem}\label{zeta lem} Assume RH. Then either 
\[
|k\mc{P}_{0,j}(t)|> \ell_0
\] 
for some $0\leqs j\leqs J$ or 
\begin{multline*}
|\zeta(\tfrac{1}{2}+it)|^{2k}
\ll
\bigg( \prod_{i=0}^J \big|\mathcal{N}_{i,J}(t,k)\big|^2
+
\sum_{\substack{ 0\leqs j\leqs J-1\\ j+1\leqs l\leqs J}} \exp\Big(\frac{2k}{\theta_j}\Big)
 \bigg(\frac{|k\mc{P}_{j+1,l}(t)|}{\ell_{j+1}}\bigg)^{2s_j}\prod_{i=0}^j \big|\mathcal{N}_{i,j}(t,k)\big|^2\bigg)
 \\
 \times |\mc{M}(t,k)|^2 
 \end{multline*}
for any positive integers $s_j$ where 
\[
\mc{M}(t,k) =\sum_{\substack{\Omega(n)\leqs 10k(\log_2 T)^2\\ p|n\implies  p\leqs \log T}}\frac{(k/2)^{\Omega(n)}\mf{g}(n)}{n^{1+2it}}.
\]
\end{lem}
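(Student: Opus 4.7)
The plan is to derive the bound by invoking Lemma \ref{sound lem} at several scales $x=T_j$ and then adaptively picking the most favorable scale as a function of $t$. Fix $t\in[T,2T]$; for each $0\leqs j\leqs J$, since $\sqrt{T_j}\geqs \log T$ under our range, Lemma \ref{sound lem} with $x=T_j$ gives
\begin{equation*}
|\zeta(\tfrac12+it)|^{2k}\ll \exp\!\bigl(2k/\theta_j\bigr)\prod_{i=0}^{j}\exp\!\bigl(2k\Re\mc{P}_{i,j}(t)\bigr)\cdot\exp\!\Bigl(k\Re\sum_{p\leqs \log T}\frac{1}{p^{1+2it}}\Bigr).
\end{equation*}
Since $\bigl|(k/2)\sum_{p\leqs \log T}p^{-1-2it}\bigr|\leqs k(\log_2 T)^2$, formulas \eqref{exp trunc} and \eqref{trunc form} identify the last exponential with $|\mc{M}(t,k)|^2$ up to an additive error of size $e^{-10k(\log_2 T)^2}$, which I drop in what follows.

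Now assume $t$ lies in the second branch of the dichotomy, i.e.\ $|k\mc{P}_{0,j}(t)|\leqs \ell_0$ for every $0\leqs j\leqs J$, and define
\begin{equation*}
j^*(t)=\max\bigl\{\, j\in\{0,\ldots,J\}\,:\,|k\mc{P}_{i,l}(t)|\leqs \ell_i \text{ for all } 0\leqs i\leqs j \text{ and all } i\leqs l\leqs J\,\bigr\}.
\end{equation*}
The dichotomy hypothesis ensures $j=0$ satisfies the defining condition, so $j^*(t)\geqs 0$. If $j^*(t)=J$, the level-$J$ bound above permits truncation via \eqref{exp poly} at every index $i\leqs J$, because each $|k\mc{P}_{i,J}(t)|\leqs \ell_i$. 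Combined with the fact that $\theta_J\asymp 10^{-12}$ makes $\exp(2k/\theta_J)=O_k(1)$, this produces precisely $\prod_{i=0}^{J}|\mc{N}_{i,J}(t,k)|^2\cdot|\mc{M}(t,k)|^2$, the first term in the stated bound.

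If instead $0\leqs j^*(t)<J$, set $j=j^*(t)$. Maximality of $j^*(t)$ produces a pair $(i',l)$ with $0\leqs i'\leqs j+1$, $i'\leqs l\leqs J$ and $|k\mc{P}_{i',l}(t)|>\ell_{i'}$. No such $i'\leqs j$ can occur by the defining property of $j^*(t)$, forcing $i'=j+1$, so there exists $l\in\{j+1,\ldots,J\}$ with $|k\mc{P}_{j+1,l}(t)|>\ell_{j+1}$. Applying the level-$j$ bound and truncating each $\exp(2k\Re\mc{P}_{i,j}(t))$ to $|\mc{N}_{i,j}(t,k)|^2$ via \eqref{exp poly}, which is legal because $|k\mc{P}_{i,j}(t)|\leqs \ell_i$ for all $i\leqs j$ (taking $l=j$ in the definition of $j^*(t)$), and then multiplying by the redundant factor $(|k\mc{P}_{j+1,l}(t)|/\ell_{j+1})^{2s_j}\geqs 1$, produces one of the summands in the lemma. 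Since every $t$ in the second branch of the dichotomy falls into exactly one of these two scenarios, the pointwise bound holds in all cases.

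The principal subtlety is the careful crafting of $j^*(t)$: by requiring the threshold condition to hold simultaneously for every $l\geqs i$ in the defining set, maximality forces the first violation to occur exactly at index $i'=j^*(t)+1$, which is precisely what pins the penalty factor onto the block $\mc{P}_{j+1,l}$ in the stated bound. All remaining steps are routine accounting of the truncation errors in \eqref{exp trunc} and \eqref{trunc form}.
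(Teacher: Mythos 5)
Your proof is correct and follows essentially the same route as the paper: apply Lemma \ref{sound lem} at a $t$-dependent scale $x=T_j$, truncate each $\exp(2k\Re\mc{P}_{i,j})$ to $|\mc{N}_{i,j}|^2$ via \eqref{exp poly}, and multiply by the redundant factor $(|k\mc{P}_{j+1,l}|/\ell_{j+1})^{2s_j}\geqs 1$ when $j<J$. The only cosmetic difference is that you select the scale via a pointwise maximum $j^*(t)$ with a monotone threshold condition (using $\forall\,i\leqs l\leqs J$), whereas the paper decomposes $[T,2T]$ into explicit sets $S(j)$ with the condition $\forall\,j\leqs l\leqs J$; both bookkeeping schemes isolate the first violating block at index $j+1$ and yield the same pointwise inequality.
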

\begin{proof}
Suppose $|k\mc{P}_{0,j}(t)|<\ell_0$. For $0\leqs j\leqs J-1$ let 
\[
S(j)=\left\{t\in[T,2T]: 
\begin{array}{lr}
&|k\mc{P}_{i,l}(t)|\leqs \ell_i\,\,\,\,\,\,\,\,\,\,\,\forall 1\leqs i\leqs j,\,\, \forall j\leqs l\leqs J;\\
&|k\mc{P}_{j+1,l}(t)|>\ell_{j+1} \text{ for some } j+1\leqs l\leqs J
\end{array}
\right\}
\]
and 
\[
S(J)=\bigg\{t\in[T,2T]: |k\mc{P}_{i,J}(t)|\leqs \ell_i\,\,\,\,\,\,\,\forall 1\leqs i\leqs J\bigg\}.
\]
Then since $[T,2T]=\cup_{j=0}^J S(j)$,  for $t\in[T,2T]$ we have
\begin{equation}\label{zeta decomp}
|\zeta(\tfrac{1}{2}+it)|^{2k}
%=\sum_{j=0}^J\mathds{1}_{t\in S(j)}\cdot|\zeta(\tfrac{1}{2}+it)|^{2k}
\leqs 
\mathds{1}_{t\in S(J)}\cdot|\zeta(\tfrac{1}{2}+it)|^{2k}+\sum_{\substack{0\leqs j\leqs J-1\\j+1\leqs l\leqs J}}\mathds{1}_{t\in S_l(j)}\cdot|\zeta(\tfrac{1}{2}+it)|^{2k}
\end{equation}
where 
\[
S_l(j)=\left\{t\in[T,2T]: 
\begin{array}{lr}
|k\mc{P}_{i,l}(t)|\leqs \ell_i &\,\,\,\,\,\,\,\,\,\,\,\forall 1\leqs i\leqs j,\,\, \forall j\leqs l\leqs J;\\
|k\mc{P}_{j+1,l}(t)|>\ell_{j+1} &
\end{array}
\right\}.
\]
%\[
%S_l(j)=\left\{t\in[T,2T]: 
%|k\mc{P}_{i,l}(t)|\leqs \ell_i\,\,\forall 1\leqs i\leqs j,\,\, \forall j\leqs l\leqs J;
%\,\,|k\mc{P}_{j+1,l}(t)|>\ell_{j+1}
%\right\}.
%\]

We apply Lemma \ref{sound lem} to each zeta function on the right hand side of \eqref{zeta decomp}. If $t\in S_l(j)$ then we take $x=T_j$ to give
\[
|\zeta(\tfrac{1}{2}+it)|^{2k}
\ll
\exp\bigg(2k\Re\sum_{p\leqs T_j}\frac{w_j(p)}{p^{1/2+it}}+2k\Re\sum_{p\leqs \log T}\frac{1}{2p^{1+2it}}
+\frac{2k}{\theta_j}\bigg).
\]
For the first sum over primes in the exponential we apply \eqref{prod}. For the second sum we note that, since $|\sum_{p\leqs \log T}\frac{1}{p^{1+2it}}|\leqs 2\log_3 T\leqs (\log\log T)^2$, we have 
\begin{equation*}
\begin{split}
\exp\Big(2k\Re\sum_{p\leqs \log T}\frac{1}{2p^{1+2it}}\Big)
= &
\big(1+O(e^{-9k(\log_2 T)^2})\big)
\bigg|\sum_{\substack{\Omega(n)\leqs 10k(\log\log T)^2\\ p|n\implies  p\leqs \log T}}\frac{(k/2)^{\Omega(n)}\mf{g}(n)}{n^{1+2it}}\bigg|^2
\end{split}
\end{equation*}
by \eqref{exp trunc} and \eqref{trunc form}. This is $\ll |\mc{M}(t,k)|^2$. Finally, to capture the small size of the set, we multiply by 
\[
 \bigg(\frac{|k\mc{P}_{j+1,l}(t)|}{\ell_{j+1}}\bigg)^{2s_j}>1.
 \] 
 If $t\in S(J)$ then we omit this last step since of course there is no such $P_{J+1,l}(t)$. 
\end{proof}

\subsection{Proof of the upper bound in Proposition \ref{Z prop}}

%By combining Lemma \ref{zeta lem} along with the observations of Section \ref{tools sec} we have enough to prove the upper bound in Proposition \ref{Z prop}. 

From \eqref{zeta D int} we are required to show that 
\[
\frac1T\int_T^{2T}|\zeta(\tfrac{1}{2}+it)|^{2k}|D(t,-k)|^2dt
\ll
\bigg(\frac{\log T}{\log X}\bigg)^{k^2}.
\]
To apply Lemma \ref{zeta lem} we must consider two cases; that where $|k\mc{P}_{0,j}(t)|> \ell_0$ and otherwise. We consider the former case first since this is simpler. 

So, let $E\subset [T,2T]$ be the subset on which $|k\mc{P}_{0,j}(t)|> \ell_0$, that is, when 
\[
\bigg|\sum_{p\leqs T^{1/(\log\log T)^2}}\frac{w_j(p)}{p^{1/2+it}}\bigg|>\frac{(\log\log T)^{3/2}}{k}.
\]
By Chebyshev's inequality and Lemma \ref{sound lem 2} we have 
\begin{align*}
\mu(E)
\leqs  &
\bigg(\frac{k^2}{(\log\log T)^{3}}\bigg)^m\int_T^{2T}\bigg|\sum_{p\leqs T^{1/(\log\log T)^2}}\frac{w_j(p)}{p^{1/2+it}}\bigg|^{2m}dt
\\
\ll &
Tm!\bigg(\frac{k^2}{(\log\log T)^{3}}\bigg)^m \bigg(\sum_{p\leqs T^{1/(\log\log T)^2}}\frac{1}{p}\bigg)^m
\end{align*}
provided $m\leqs (1-o(1))(\log\log T)^2$ where in the last line we have used $|w_j(p)|\leqs 1$ for all $j$. By Stirling's formula and Mertens' theorem this is  
\begin{equation}\label{meas E}
\mu(E)
\ll
Tm^{1/2}\bigg(\frac{k^2m}{e(\log\log T)^{2}}\bigg)^m
\leqs 
Te^{-c(\log\log T)^2}
\end{equation}
for some $c>1$ on choosing $m=\lfloor \min(1,\tfrac{1}{k^2})(\log\log T)^2\rfloor $.
Therefore by H\"older's inequality, Harper's bound for the moments of zeta and \eqref{D mth moment} we have 
\[
\frac{1}{T}\int_E|\zeta(\tfrac{1}{2}+it)|^{2k}|D(t,-k)|^2dt\ll e^{-C(\log\log T)^2}(\log T)^{O(1)}=o(1).
\]

We may now consider the second case where $|k\mc{P}_{0,j}(t)|> \ell_0$ and accordingly concentrate on the integral
\[
\frac{1}{T}\int_{[T,2T]\backslash E}|\zeta(\tfrac{1}{2}+it)|^{2k}|D(t,-k)|^2dt.
\]
By the second part of Lemma \ref{zeta lem} this is 
\begin{align}\label{upper bound}
\ll &
\frac{1}{T}\int_T^{2T}
\bigg( \prod_{i=0}^J \big|\mathcal{N}_{i,J}(t,k)\big|^2
+
\sum_{\substack{ 0\leqs j\leqs J-1\\ j+1\leqs l\leqs J}} \exp\Big(\frac{2k}{\theta_j}\Big)
 \bigg(\frac{|k\mc{P}_{j+1,l}(t)|}{\ell_{j+1}}\bigg)^{2s_j}\prod_{i=0}^j \big|\mathcal{N}_{i,j}(t,k)\big|^2\bigg)\nonumber
 \\
& \qquad\qquad\times |\mc{M}(t,k)|^2 |D(t,-k)|^2dt.
\end{align}
To compute the resultant integrals we apply the following lemma.

\begin{lem}\label{big int lem} 
For $0\leqs s_j\leqs 1/(10\theta_j)$ we have 
\begin{equation*}
\frac{1}{T}\int_T^{2T} |D(t,-k)|^2|\mc{M}(t,k)|^2|\mc{P}_{j+1,l}(t)|^{2s_j}\prod_{i=0}^j \big|\mathcal{N}_{i,j}(t,k)\big|^2 dt
\ll  s_j!P_{j+1}^{s_j}\bigg(\frac{\log T_j}{\log X}\bigg)^{k^2}
\end{equation*}
where 
\[
P_{j+1}=\sum_{T_j< p\leqs T_{j+1}}\frac{1}{p}.
\]
\end{lem}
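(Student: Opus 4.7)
The plan is to use the disjoint prime-range structure of the integrand to factorise the mean via the product mean-value theorem \eqref{4.6}, and then to bound each resulting factor separately. I first amalgamate the three pieces supported on primes $\leqs T_0$ -- namely $D(t,-k)$ (on primes $\leqs X$), $\mc{M}(t,k)$ (on primes $\leqs\log T$), and $\mc{N}_{0,j}(t,k)$ (on primes $\leqs T_0$) -- into a single Dirichlet polynomial $Q(t):=D(t,-k)\,\mc{M}(t,k)\,\mc{N}_{0,j}(t,k)$ supported on integers with all prime factors $\leqs T_0$. Then $Q$, the polynomials $\mc{N}_{i,j}(t,k)$ for $1\leqs i\leqs j$, and $\mc{P}_{j+1,l}(t)^{s_j}$ sit on the pairwise disjoint prime ranges $(1,T_0]$, $(T_{i-1},T_i]$, and $(T_j,T_{j+1}]$ respectively. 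The total Dirichlet polynomial length is bounded by $T_0^{O(\ell_0)}\prod_{i=1}^{j}T_i^{20\ell_i}\cdot T_{j+1}^{2s_j}$; the first two are $\leqs T^{1/50}$ by the computation in \eqref{sum length}, while the hypothesis $s_j\leqs 1/(10\theta_j)$ yields $T_{j+1}^{2s_j}=T^{2e\theta_j s_j}\leqs T^{e/5}$. The total is thus $\ll T^{0.6}$ and \eqref{4.6} applies, factorising the integral (up to $1+o(1)$) into a product of the mean squares of $Q$, of each $\mc{N}_{i,j}$, and of $\mc{P}_{j+1,l}^{s_j}$.

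For the rightmost factor, Lemma \ref{sound lem 2} with $m=s_j$ and $a(p)=w_l(p)\mathds{1}_{T_j<p\leqs T_{j+1}}$ (using $|w_l(p)|\leqs 1$) gives the bound $\ll s_j!\,P_{j+1}^{s_j}$. For $i\geqs 1$, the Montgomery--Vaughan theorem \eqref{MVT} and the multiplicative structure of $\mc{N}_{i,j}$ give the Euler-product bound
\[
\frac{1}{T}\int_T^{2T}|\mc{N}_{i,j}(t,k)|^2\,dt\leqs\prod_{T_{i-1}<p\leqs T_i}\bigg(1+\frac{k^2w_j(p)^2}{p}+O(p^{-2})\bigg)\ll\exp\bigg(k^2\sum_{T_{i-1}<p\leqs T_i}\frac{w_j(p)^2}{p}\bigg).
\]
An analogous Euler-product bound for $Q$ holds on primes $\leqs T_0$: for $X<p\leqs T_0$ only $\mc{N}_{0,j}$ contributes, yielding the local factor $1+k^2w_j(p)^2/p+O(p^{-2})$; for $p\leqs X$ all three pieces contribute, but the coefficients of $D(t,-k)$ equal $\beta_{-k}(p^m)=d_{-k}(p^m)$ for $\Omega(p^m)\leqs W_0$ by Lemma \ref{P lem}, whilst those of $\mc{N}_{0,j}(t,k)$ Taylor-expand $\exp(kw_j(p)/p^s)$, and since $w_j(p)-1=O(\log p/\log T_j)$ uniformly for $p\leqs X$ their convolution yields a local factor $1+O((\log p)^2/(p(\log T_j)^2))$ which sums to $O(1)$ across $p\leqs X$ by partial summation from the prime number theorem. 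The $\mc{M}$-contribution at each prime is a convergent Bessel-type series carrying an extra $p^{-1}$, again contributing $O(1)$ overall. Combining everything and using Mertens' theorem,
\[
\sum_{X<p\leqs T_j}\frac{w_j(p)^2}{p}\leqs\log\log T_j-\log\log X+O(1),
\]
which telescopes to give the target bound $\ll s_j!P_{j+1}^{s_j}(\log T_j/\log X)^{k^2}$.

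The main obstacle is executing the Euler-product mean-square computation for $Q$. One must first dispose of the tails $\Omega(n)>W_0$ in $D(t,-k)$, where the identity $\alpha_{-k}=\beta_{-k}$ fails; this is handled by Rankin's trick combined with $|\alpha_{-k}(n)|\leqs d_{|k|}(n)$, exactly as in the proofs of Propositions \ref{P prop} and \ref{Z asymp prop}. One must also verify that the truncations in $\mc{N}_{0,j}$ and $\mc{M}$ inherited from \eqref{exp trunc 0} do not spoil the Euler-product factorisation, but since these truncations arise in regimes where the exponential arguments are comfortably bounded by their cutoffs, the tails are exponentially damped and present only bookkeeping difficulties. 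The net effect is the clean cancellation between the $-k$ part of $D$ and the $+k$ part of $\mc{N}_{0,j}$ on overlapping primes, which is the mechanism producing the factor $(\log X)^{-k^2}$.
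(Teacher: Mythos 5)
Your proof follows the same route as the paper: amalgamate $D(t,-k)$, $\mc M(t,k)$, and $\mc N_{0,j}(t,k)$ into a single $T_0$-smooth Dirichlet polynomial, invoke the coprime-block mean-value identity \eqref{4.6} to factorise the mean square, bound the $\mc P_{j+1,l}^{s_j}$ factor by $s_j!P_{j+1}^{s_j}$, and treat the remaining factors via Euler-product/Mertens computations after clearing the $\Omega$-cutoff tails with Rankin's trick. The mechanism you identify at primes $p\leqs X$ — the cancellation $\beta_{-k}(p)+k w_j(p)=k(w_j(p)-1)=O(\log p/\log T_j)$, producing local factors summing to $O(1)$ — is precisely what makes the paper's $\prod_{p\leqs X}(1+O(p^{-2}))$ shorthand valid. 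Two small slips: the assertion $\beta_{-k}(p^m)=d_{-k}(p^m)$ requires $p^m\leqs X$ (not merely $m\leqs W_0$), and the polynomial $\mc P_{j+1,l}^{s_j}$ has length $T_{j+1}^{s_j}$, not $T_{j+1}^{2s_j}$; neither affects correctness since the length remains $o(T)$. Citing Lemma \ref{sound lem 2} for the $\mc P$ factor in place of the paper's direct multinomial-plus-Montgomery–Vaughan calculation is the same computation under a different name.
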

\begin{proof}
We write the integrand as a multiple sum. 
First off, by the multinomial theorem we have 
\[
\mc{P}_{j+1,l}(t)^{s_j}=\bigg(\sum_{T_j< p\leqs T_{j+1}}\frac{w_l(p)}{p^{1/2+it}}\bigg)^{s_j}
=
s_j!\sum_{\substack{\Omega(n)=s_j\\p|n\implies T_j< p\leqs T_{j+1} }}
\frac{w_l(n)\mf{g}(n)}{n^{1/2+it}}.
\]
Therefore
\begin{align*}
&D(t,-k)\mc{M}(t,k) \mc{P}_{j+1,l}(t)^{s_j}\prod_{i=0}^j\mathcal{N}_{i,j}(t,k) 
\\
= &
s_j!\sum_{n\in S(X)}\frac{\alpha_{-k}(n)}{n^{1/2+it}}
\sum_{\substack{\Omega(n)\leqs 10k(\log_2 T)^2\\ p|n\implies  p\leqs \log T}}\frac{(k/2)^{\Omega(n)}\mf{g}(n)}{n^{1+2it}}
\sum_{\substack{\Omega(n)=s_j\\p|n\implies T_j< p\leqs  T_{j+1} }}
\frac{w_l(n)\mf{g}(n)}{n^{1/2+it}}
\\
&
\times 
\prod_{i=0}^j 
\bigg(\sum_{\substack{\Omega(n)\leqs 10\ell_i\\ p|n\implies T_{i-1}< p\leqs T_i}}\frac{k^{\Omega(n)}w_j(n)\mf{g}(n)}{n^{1/2+it}}\bigg).
\end{align*}
Since $X\leqs T_0$ we may group together all the sums over $T_0$-smooth numbers as a single sum and write the above as 
\[
s_j!\sum_{n}\frac{\gamma(n)}{n^{1/2+it}}
\sum_{\substack{\Omega(n)=s_j\\p|n\implies T_j<p\leqs  T_{j+1} }}
\frac{w_l(n)\mf{g}(n)}{n^{1/2+it}}
\prod_{i=1}^j 
\bigg(\sum_{\substack{\Omega(n)\leqs 10\ell_i\\ p|n\implies T_{i-1}< p\leqs T_i}}\frac{k^{\Omega(n)}w_j(n)\mf{g}(n)}{n^{1/2+it}}\bigg)
\]
for some coefficients $\gamma(n)$ where the product is empty if $j=0$. Since this is a short Dirichlet polynomial we find by \eqref{4.6} that 
\begin{multline}\label{big prod}
 \frac{1}{T}\int_T^{2T} |D(t,-k)|^2|\mc{M}(t,k)|^2|\mc{P}_{j+1,l}(t)|^{2s_j}\prod_{i=0}^j \big|\mathcal{N}_{i,j}(t,k)\big|^2 dt
\\
\ll 
{(s_j!)^2}\sum_{n}\frac{\gamma(n)^2}{n}
\sum_{\substack{\Omega(n)=s_j\\p|n\implies T_j\leqs p< T_{j+1} }}
\frac{w_l(n)^2\mf{g}(n)^2}{n} 
\\
\prod_{i=1}^j 
\bigg(\sum_{\substack{\Omega(n)\leqs 10\ell_i\\ p|n\implies T_{i-1}< p\leqs T_i}}\frac{k^{\Omega(n)}w_j(n)^2\mf{g}(n)^2}{n}\bigg).
\end{multline}

Now, by \eqref{prod mv} we explicitly have
\begin{equation*}
\sum_n \frac{\gamma(n)^2}{n}
=
\sideset{}{'}\sum_{\substack{n_1n_2n_3^2=\\n_4n_5n_6^2}}
\frac{\alpha_{-k}(n_1)\alpha_{-k}(n_4)k^{\Omega(n_2n_5)}(k/2)^{\Omega(n_3n_6)}W(\ul{n})}{(n_1n_2n_4n_5)^{1/2}n_3n_6}
\end{equation*}
where
\[
W(\ul{n})
=
w_j(n_2)w_j(n_5)\mf{g}(n_2)\mf{g}(n_3)\mf{g}(n_5)\mf{g}(n_6)
\]
and the $\prime$ in the sum denotes that $n_1,n_3\in S(X)$ and 
\[
\begin{array}{cc}
\Omega(n_2),\Omega(n_5)\leqs 10\ell_0&\qquad \Omega(n_3),\Omega(n_6)\leqs 10k(\log_2 T)^2\\
p|n_2,n_5\implies  p< T_0&\qquad  p|n_3,n_6\implies p\leqs \log T.
 \end{array}
\]
We first estimate the terms with $\Omega(n_1),\Omega(n_4)> W_0$. By the usual arguments, for $1<r<2$ these terms are bounded by, 
\begin{align*}
\ll &
r^{-W_0}\sum_{\substack{n_1n_2n_3^2=\\n_4n_5n_6^2\\p|n_j\implies p\leqs T}}
\frac{r^{\Omega(n_1)}d_k(n_1)d_k(n_4)k^{\Omega(n_2n_3n_5n_6)}W(\ul{n})}{(n_1n_2n_4n_5)^{1/2}n_3n_6}
\\
= &
r^{-W_0}\prod_{p\leqs T} \bigg(1+\frac{(2r+2)k^2}{p}+O(p^{-2})\bigg)\ll e^{-W_0}(\log T)^{6k^2}=o(1).
\end{align*}
We then replace $\alpha_{-k}(n)$ with $\beta_{-k}(n)$ in the remaining sum. The usual arguments also allow us to remove the restrictions on all the $\Omega(n_j)$ at the cost of an error of size $o(1)$. Expressing the resultant sum as an Euler product we get 
\begin{align*}
\sum_{n}\frac{\gamma(n)^2}{n}
=&
\prod_{p\leqs X}\bigg(1+O(p^{-2})\bigg)\prod_{X<p\leqs T_0}\bigg(1+\frac{k^2}{p}+O(p^{-2})\bigg) +o(1)
\\
\asymp &
 \bigg(\frac{\log T_0}{\log X}\bigg)^{k^2}
\end{align*}
since $\beta_{-k}(n)$ is supported on $X$-smooth numbers where it satisfies $\beta_{-k}(p)=-k$ and $\beta_{-k}(p^m)\ll d_k(p^m)$ for $m\geqs 2$.

The second sum in \eqref{big prod} is
\[
(s_j!)^2\sum_{\substack{\Omega(n)=s_j\\p|n\implies T_j\leqs p< T_{j+1} }}
\frac{w_l(n)^2\mf{g}(n)^2}{n}
\leqs
s_j!\sum_{\substack{\Omega(n)=s_j\\p|n\implies T_j\leqs p< T_{j+1} }}
\frac{s_j!\mf{g}(n)}{n}
=
s_j!\bigg(\sum_{T_j\leqs p<T_{j+1}}\frac{1}{p}\bigg)^{s_j},
\]
since $\mf{g}(n)\leqs 1$, whilst the third sum is 
\begin{align*}
\prod_{i=1}^j 
\sum_{\substack{\Omega(n)\leqs 10\ell_i\\ p|n\implies T_{i-1}< p\leqs T_i}}\frac{k^{2\Omega(n)}w_j(n)^2\mf{g}(n)^2}{n}
\ll &
\prod_{i=1}^j 
\prod_{T_{i-1}\leqs p<T_i}
\bigg(1+\frac{k^2}{p}+O(p^{-2})\bigg)
\\
\asymp &
\bigg(\frac{\log T_j}{\log T_0}\bigg)^{k^2}.
\end{align*}
Combining these estimates gives the result.
\end{proof}

\begin{proof}[Completion of proof of upper bound in Proposition \ref{Z prop}]
Applying Lemma \ref{big int lem} in \eqref{upper bound} gives an upper bound of the form 
\begin{align*}
\bigg(\frac{\log T_J}{\log X}\bigg)^{k^2}
+
\sum_{\substack{ 0\leqs j\leqs J-1\\ j+1\leqs l\leqs J}} \exp\Big(\frac{2k}{\theta_j}\Big)
s_j! \bigg(\frac{k{P}_{j+1}}{\ell_{j+1}^2}\bigg)^{s_j}\bigg(\frac{\log T_j}{\log X}\bigg)^{k^2}.
\end{align*}
On noting that 
\[
J-j\ll \log(1/\theta_j),\qquad P_{j+1}=\log\bigg(\frac{\log T_{j+1}}{\log T_j}\bigg)+o(1)\leqs 2, \qquad
\ell_{j+1}^2=\theta_j^{-3/2}
\]
and setting $s_j=1/(10\theta_j)$, then by Stirling's formula this is 
\begin{align*}
\ll &
\bigg(\frac{\log T}{\log X}\bigg)^{k^2}
\bigg(
1+\sum_{0\leqs j\leqs J-1}
\log(\tfrac{1}{\theta_j}) 
\theta_j^{-{1/10\theta_j}}c^{1/\theta_j} \theta_j^{{3}/{20\theta_j}}
\bigg)
\\
\ll &
 \bigg(\frac{\log T}{\log X}\bigg)^{k^2}
\bigg(
1+\sum_{0\leqs j\leqs J-1}e^{-{C}\theta_j^{-1}\log(1/\theta_j)}
\bigg)
\end{align*}
for some constants $c,C>0$. Since this last series is bounded the result follows.
\end{proof}

%%%%%%%%%%%%% LOWER 1 %%%%%%%%%%%%%%

\section{Lower bound for $0\leqs k\leqs 1$}\label{lower 1}

We keep a similar setup to the previous section but with a few minor changes. There is no longer any need for the weights $w_j(p)$ so we can simplify our notation and let 
\[
\mc{N}_i(t,k)
=
\sum_{\substack{\Omega(n)\leqs 10\ell_i\\ p|n\implies T_{i-1}< p\leqs T_i}}\frac{k^{\Omega(n)}\mf{g}(n)}{n^{1/2+it}}
\]
where the $T_i$ and $\ell_i$ are as before for $0\leqs i\leqs J$. As remarked earlier, in this section we take $T_{-1}=A $ with
$A$ sufficiently large to be chosen later.
We then form the product 
\begin{equation}\label{N prod}
\mc{N}(t,k):=\prod_{i=0}^J \mc{N}_i(t,k)=\sum_{n\leqs Y}\frac{\gamma_k(n)}{n^{1/2+it}}
\end{equation}
for some coefficients $\gamma_k(n)$ where from \eqref{sum length} we have $Y=T^{1/50}$. We can think of this as an approximation to $\zeta(1/2+it)^k$; It possesses several nice features akin to an Euler product whilst also being a short Dirichlet polynomial.

We acquire our lower bound by applying H\"older's inequality, the form of which will depend on whether $0< k\leqs 1$ or $k\geqs 1$. 
The latter case is somewhat simpler so we give details for the case $0< k\leqs 1$ first. By H\"older's inequality, we have  
\begin{align*}\label{holder}
&\Big|\frac{1}{T}\int_\mc{S}\zeta(\tfrac{1}{2}+it)\mc{N}(t,k-1)\overline{\mc{N}(t,k)}|D(t,-k)|^{2}dt
\Big| 
\\
&\ll
 \Big( \frac{1}{T}\int_\mc{S} |\zeta(\tfrac{1}{2}+it)|^{2k}|D(t,-k)|^2dt \Big)^{\frac 12} 
  \Big( \frac1T\int_{T}^{2T} |\zeta(\tfrac 12+it) {\mathcal N}(t,k-1)|^2
|D(t,-k)|^{2} dt \Big)^{\frac{1-k}{2}}
\\
&\hskip 1 in \times \Big(\frac1T\int_T^{2T}  |{\mathcal N}(t,k)|^{\frac{2}{k}} |{\mathcal N}(t,k-1)|^2 |D(t,-k)|^{2}dt \Big)^{\frac k2}. 
\end{align*}

Since $D(t,-k)\sim P_X(1/2+it)^{-k}$ for $t\in\mc{S}$ the first integral on the right hand side is $\ll \frac 1T\int_{T}^{2T}|Z_X(1/2+it)|^{2k}dt$. 
\begin{rem}
Note also that in this argument we can modify the definition of $D$ to be 
\[
D(t,k)=\sum_{p|n\implies A<p\leqs X}\frac{\alpha_k(n)}{n^{1/2+it}}
\]
since the removal of the $A$-smooth numbers from the sum in 
\[
\exp\Big(k\sum_{n\leqs X}\frac{\Lambda(n)}{n^{1/2+it}\log n}\Big)
\] 
leads to a bounded multiplicative factor which can be absorbed into the $\ll$ sign. Also, to save space in the future we may absorb the condition $p|n\implies A<p\leqs X$ into the coefficients $\alpha_k(n)$. 
\end{rem}
The lower bound in the case $0<k\leqs 1$ now follows from the subsequent Propositions.

\begin{prop}\label{lower prop}Suppose $X\leqs \eta_k(\log T)^2(\log_2 T)^2$. Then 
\[
\frac1T\int_\mc{S}\zeta(\tfrac{1}{2}+it)\mc{N}(t,k-1)\overline{\mc{N}(t,k)}|D(t,-k)|^{2}dt\geqs C\bigg(\frac{\log T}{\log X}\bigg)^{k^2}
\]
for some $C>0$. Assuming RH we may take $X\leqs (\log T)^{\theta_k-\epsilon}$.
\end{prop}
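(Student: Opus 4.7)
The plan is to first extend the integral from $\mc{S}$ to $[T,2T]$ at negligible cost, then apply the approximate functional equation (AFE) to $\zeta(\tfrac{1}{2}+it)$, reduce to a diagonal sum via Montgomery--Vaughan, and identify an Euler product yielding the desired lower bound. The contribution from $\mc{E}$ is controlled by H\"older's inequality with exponents $2,2p,2q$ satisfying $1/(2p)+1/(2q)=1/2$:
\[
\frac{1}{T}\bigg|\int_\mc{E}\zeta(\tfrac{1}{2}+it)\mc{N}(t,k-1)\overline{\mc{N}(t,k)}|D(t,-k)|^2dt\bigg|\ll\bigg(\frac{\mu(\mc{E})}{T}\bigg)^{1/2}I_\zeta^{1/(2p)}I_{\mc{N}D}^{1/(2q)}.
\]
The $\mc{N}\cdot D$ moment is a short Dirichlet polynomial moment of size $(\log T)^{O(1)}$, computable via \eqref{4.6} since the $\mc{N}_i$ have coprime prime supports, and the zeta moment is $\ll(\log T)^{O(1)}$ either unconditionally for small $p$ or by Harper's bound on RH. Combined with $\mu(\mc{E})/T\ll e^{-\delta V_0}$ from Lemma \ref{E meas lem}, this error is $o((\log T/\log X)^{k^2})$.

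Next, apply the approximate functional equation
\[
\zeta(\tfrac{1}{2}+it)=\sum_{l\leqs \sqrt{t/2\pi}}l^{-1/2-it}+\chi(\tfrac{1}{2}+it)\sum_{l\leqs \sqrt{t/2\pi}}l^{-1/2+it}+O(t^{-1/4}).
\]
The polynomial $\mc{N}(t,k-1)\overline{\mc{N}(t,k)}|D(t,-k)|^{2}$ has length at most $Y^2X^{2W_0}\leqs T^{1/25+o(1)}$ by \eqref{sum length}, so multiplication by the first AFE sum gives a Dirichlet polynomial of length $\leqs T^{1/2+1/25}\leqs T^{1-\epsilon}$. Montgomery--Vaughan \eqref{MVT} then extracts the diagonal contribution
\[
T\sum_{lna=mb}\frac{\gamma_{k-1}(n)\gamma_k(m)\alpha_{-k}(a)\alpha_{-k}(b)}{\sqrt{lnmab}},
\]
while the dual AFE term, with oscillatory kernel $\chi(\tfrac{1}{2}+it)(l/r)^{it}$ and stationary phase at $t_\ast=2\pi l\cdot na/mb$, contributes $o(T)$ after integration by parts and summation over the short-polynomial variables.

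The diagonal sum factors as an Euler product. At each prime $p$ the local constraint is $v_p(l)+v_p(n)+v_p(a)=v_p(m)+v_p(b)$. For $p>T_J$ this forces $v_p(l)=0$, yielding only trivial contribution. For $X<p\leqs T_J$, the supports give $v_p(a)=v_p(b)=0$, and enumeration of first-order terms using $\gamma_{k-1}(p)=k-1$ and $\gamma_k(p)=k$ produces an Euler factor $1+k^2/p+O(p^{-2})$. For $A<p\leqs X$, incorporating $\alpha_{-k}(p)=-k$ gives exact cancellation at first order (the six contributing tuples sum to $k+k(k-1)-k^2-k-k(k-1)+k^2=0$), leaving $1+O(p^{-2})$. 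By Mertens' theorem, the product over $X<p\leqs T_J$ is $\asymp(\log T_J/\log X)^{k^2}\asymp(\log T/\log X)^{k^2}$, while the products over the remaining ranges contribute bounded positive constants, provided $A$ is chosen large enough. The $\Omega$-truncations in $\gamma_k$ only discard a vanishing fraction of the mass, handled by Rankin's trick as in \eqref{tail sum bound}.

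The main obstacle is twofold. First, rigorously dispatching the dual AFE term requires a careful stationary-phase analysis, since although the total length $T^{1/2+1/25}$ is comfortably below $T$, the stationary point $t_\ast=2\pi l\cdot na/mb$ can in principle fall inside $[T,2T]$ for specific combinations; one exploits the fact that for each $l$ only a narrow range of ratios $na/mb$ is resonant, and standard bounds for the twisted integral then give a total contribution of $O(T^{1-\delta})$. Second, securing a positive constant $C$ depends on the least controlled Euler factors at primes $p\leqs A$, which arise only from the $l$-sum of $\zeta$ in the diagonal; taking $A$ sufficiently large ensures these combine to a quantity bounded away from zero. Both difficulties are standard within the Radziwi\l\l--Soundararajan / Heap--Soundararajan lower-bound framework.
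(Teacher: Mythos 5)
Your strategy matches the paper's at a high level (extend from $\mc{S}$ to $[T,2T]$ via H\"older plus the small measure of $\mc{E}$, substitute an approximation for $\zeta$, extract the diagonal, factor into an Euler product), but the choice of approximation for $\zeta$ is a genuine deviation, and it makes your argument more delicate than it needs to be. The paper uses the much cruder estimate $\zeta(\tfrac12+it)=\sum_{n\leqs T}n^{-1/2-it}+O(T^{-1/2})$, valid for $t\in[T,2T]$. This sum is of length $T$, too long for Montgomery--Vaughan, but that is compensated by observing that the ``positive'' side $n_3n_5$ of the off-diagonal equation satisfies $n_3n_5\leqs YX^{W_0}=T^{1/50+o(1)}$, so $|\log(n_3n_5/n_1n_2n_4)|\gg 1/(YX^{W_0})$ and the off-diagonal terms integrate to $o(T)$ by direct estimation. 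This completely avoids the functional equation, $\chi$, and any oscillatory-integral analysis.

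Your AFE route is workable, but your description of the dual term has a slip. With the correct convention $\chi(\tfrac12+it)=e^{-2i\theta(t)}$, $\theta'(t)\sim\tfrac12\log\tfrac{t}{2\pi}$, the phase of $\chi(\tfrac12+it)N^{it}$ with $N=lmb/(na)$ has derivative $-\log(t/2\pi)+\log N$, so the stationary point is $t_\ast=2\pi N=2\pi lmb/(na)$, not $2\pi l\cdot na/(mb)$. More importantly, since $l\leqs\sqrt{t/(2\pi)}\ll\sqrt{T}$ and the short-polynomial ratio is $\leqs T^{1/25+o(1)}$, one has $t_\ast\ll T^{1/2+1/25+o(1)}$, which lies strictly below $[T,2T]$. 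There is \emph{no} resonant regime; the phase derivative is $\gg\log T$ uniformly on $[T,2T]$, and integration by parts alone kills the dual term, giving $o(T)$ after summation. So the ``main obstacle'' you flag --- a resonant $t_\ast$ sneaking into $[T,2T]$ --- simply does not arise; the dual term is easier to dispatch than you suggest, though you still need to actually carry this out rather than assert it.

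One other point to be careful about: for a \emph{lower} bound, the $\Omega$-truncations in the $\gamma_k$ coefficients cannot simply be dropped via Rankin's trick the way they are in the upper-bound computations, because here the Euler factors over the ranges $(T_{i-1},T_i]$ are individually close to $1$ and a non-decaying error per factor would accumulate across $J\asymp\log_3 T$ factors. The paper tracks the truncation error per range as $\leqs e^{-9\ell_i}$, so each factor is $\geqs(1-e^{-7\ell_i})\prod_{T_{i-1}<p\leqs T_i}(1+k^2/p+O(p^{-2}))$, and the product $\prod_i(1-e^{-7\ell_i})$ converges. Your sketch elides this, and it is precisely where the lower-bound argument differs in spirit from the upper-bound ones.
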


\begin{prop}\label{upper prop 1}For $X\leqs T^{1/(\log_2 T)^2}$ we have 
\[
\frac1T\int_{T}^{2T} |\zeta(\tfrac 12+it) {\mathcal N}(t,k-1)|^2  |D(t,-k)|^{2} dt \ll\bigg(\frac{\log T}{\log X}\bigg)^{k^2}.
\]
\end{prop}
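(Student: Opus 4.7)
My plan is to merge $\mc{N}(t,k-1)$ and $D(t,-k)$ into the single Dirichlet polynomial
\[
A(t)=\mc{N}(t,k-1)D(t,-k)=\sum_n \frac{c(n)}{n^{1/2+it}},
\]
of length $N\leqs Y\cdot X^{W_0}\leqs T^{1/50+o(1)}$; the hypothesis $X\leqs T^{1/(\log_2 T)^2}$ ensures that $X^{W_0}=T^{o(1)}$. Since $N$ is well below $T^{1/2}$, I would then invoke the classical asymptotic formula for the twisted second moment of $\zeta$ (cf.\ \cite{BCH,BCR}) which gives
\[
\frac{1}{T}\int_T^{2T}|\zeta(\tfrac12+it)|^2|A(t)|^2\,dt=(1+o(1))\sum_{m,n}\frac{c(m)\overline{c(n)}(m,n)}{mn}\Big(\log\frac{T(m,n)^2}{mn}+C\Big).
\]

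After restricting $D$ to integers with prime factors in $(A,X]$ (as in the Remark of Section \ref{lower 1}) and truncating the tails $\Omega(n)>W_0$ in $D$ and $\Omega(n)>10\ell_i$ in each $\mc{N}_i$ via Rankin's trick (as in \eqref{tail sum bound}), the coefficients $c(n)$ become multiplicative across the disjoint prime blocks $(A,X],(X,T_0],(T_0,T_1],\ldots,(T_{J-1},T_J]$ and the arithmetic sum factors as an Euler product. Writing $\log(T(m,n)^2/mn)=\log T-\log([m,n]/(m,n))$, the dominant $\log T$ contribution reduces to $\log T\cdot\prod_{p\leqs T_J}L_p$ with $L_p=\sum_{a,b\geqs 0}c(p^a)c(p^b)p^{-\max(a,b)}$. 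For $p>X$ the exponential form of $\mc{N}_i$ gives $c(p^l)=(k-1)^l/l!$, so $L_p=1+(k^2-1)p^{-1}+O(p^{-2})$ and the product over $X<p\leqs T_J$ is $\asymp(\log T/\log X)^{k^2-1}$. For $A<p\leqs X$ both $\alpha_{-k}$ and the $\mc{N}_0$ coefficients contribute, and the key algebraic identity
\[
(1-p^{-s})^k\exp((k-1)p^{-s})=\exp\Big(-p^{-s}-\tfrac{k}{2}p^{-2s}-\tfrac{k}{3}p^{-3s}-\cdots\Big)
\]
forces $c(p)=-k+(k-1)=-1$, giving $L_p=1-p^{-1}+O(p^{-2})$ and a product $\asymp 1/\log X$ by Mertens. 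Combined with the $\log T$ prefactor this yields the claimed $(\log T/\log X)^{k^2}$.

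The subsidiary $\log([m,n]/(m,n))$ piece would be handled by writing $\log([m,n]/(m,n))=\sum_p(\log p)|v_p(m)-v_p(n)|$ and opening the multiplicativity one prime at a time; each fixed $p$ yields a local correction of size $O(|c(p)|/p)$ and summing $\log p/p$ produces a term of the same order as the main one (with a bounded multiplicative constant). The principal obstacle will be the precise cancellation at primes $p\leqs X$: although $\mc{N}_0(t,k-1)$ only models the exponential approximation to $\zeta^{k-1}$ rather than $\zeta^{k-1}$ itself, the identity above ensures that the product $D\cdot\mc{N}_0$ still exhibits the crucial $c(p)=-1$ that damps the Euler product. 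A further technical point is the substitution $\alpha_{-k}\leftrightarrow\beta_{-k}$ on integers with $\Omega(n)\leqs W_0$ (as in the proof of Proposition \ref{P prop}), which introduces only $o(1)$ errors via the standard Rankin's trick.
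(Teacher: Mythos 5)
Your proposal follows essentially the same route as the paper: merge $\mc{N}(t,k-1)$ and $D(t,-k)$ into a single short Dirichlet polynomial, invoke the twisted second moment formula of \cite{BCH,BCR}, truncate tails and pass from $\alpha_{-k}$ to $\beta_{-k}$ via Rankin's trick, and then factor the arising arithmetic sum into Euler products over the disjoint prime blocks $(A,X]$, $(X,T_0]$, $(T_{i-1},T_i]$. The computation of the local factors is identical in substance: the cancellation $c(p)=\beta_{-k}(p)+(k-1)=-1$ at primes $p\leqs X$ gives $L_p=1-p^{-1}+O(p^{-2})$, and $c(p)=k-1$ at larger primes gives $L_p=1+(k^2-1)p^{-1}+O(p^{-2})$.

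The one genuine difference is how the logarithmic weight $\log\big(T(m,n)^2/mn\big)$ is incorporated. The paper represents it as a contour integral $\frac{1}{2\pi i}\int_{|z|=1/\log T}\big(BT(m,n)^2/mn\big)^z\,dz/z^2$ and then takes a trivial bound on the contour, reducing the problem to estimating a \emph{single} shifted Euler product at $|z|=1/\log T$; the shift $p^{-z}=1+O(\log p/\log T)$ is absorbed harmlessly since $\sum_{p\leqs T_J}\log p/p\ll\log T$. You instead split $\log T-\log\big([m,n]/(m,n)\big)$ additively and treat the two pieces separately, opening the second piece prime by prime with a local ``derivative'' correction $\tilde L_p\ll |c(p)|/p$ and summing $\sum_p(\log p)\tilde L_p/L_p\ll\log T$. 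Both arguments deliver the same $(\log T/\log X)^{k^2}$ bound; the contour-integral device is slightly slicker because it handles the $\log$ and the $\log T$ prefactor in one stroke, whereas your additive decomposition requires a separate bookkeeping step for the subsidiary term (which you correctly outline but do not fully execute). The sketch is sound and there are no gaps in the underlying ideas; to make it rigorous you would need to spell out that the factorization $\sum_{m,n}\frac{c(m)c(n)}{[m,n]}\log\frac{[m,n]}{(m,n)}=\sum_p\log p\cdot\tilde L_p\prod_{q\neq p}L_q$ is valid after the truncation/replacement steps, and verify the uniform bound $|\tilde L_p/L_p|\ll 1/p$ across all prime blocks.
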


\begin{prop}\label{upper prop 2} For $X\leqs T^{1/(\log_2 T)^2}$ we have 
\[
\frac1T\int_T^{2T}  |{\mathcal N}(t,k)|^{\frac{2}{k}} |{\mathcal N}(t,k-1)|^2|D(t,-k)|^{2}dt\ll\bigg(\frac{\log T}{\log X}\bigg)^{k^2}.
\]
\end{prop}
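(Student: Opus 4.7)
\textbf{Proof plan for Proposition \ref{upper prop 2}.} The obstacle is the non-integer exponent $2/k$: since $|\mc{N}(t,k)|^{2/k}$ is not itself a Dirichlet polynomial, the Montgomery--Vaughan mean value theorem cannot be applied directly. The plan is to replace $|\mc{N}(t,k)|^{2/k}$ by the genuine Dirichlet polynomial $|\mc{N}(t,1)|^2$ on a good set where both are close to $e^{2\Re \mc{P}(t)}$, and to show that the complement contributes a negligible amount.

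First I would introduce $\mc{G}_i = \{t \in [T,2T] : |\mc{P}_i(t)| \leqs \ell_i\}$ and $\mc{G} = \cap_{i=0}^J \mc{G}_i$, where $\mc{P}_i(t) = \sum_{T_{i-1} < p \leqs T_i} p^{-1/2-it}$. Since $0 < k \leqs 1$, on $\mc{G}$ we have $|k\mc{P}_i(t)| \leqs \ell_i$, so the exponential truncation \eqref{exp trunc} gives $\mc{N}_i(t,k) = e^{k\mc{P}_i(t)}(1+O(e^{-9\ell_i}))$ and $\mc{N}_i(t,1) = e^{\mc{P}_i(t)}(1+O(e^{-9\ell_i}))$. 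Raising moduli appropriately yields $|\mc{N}_i(t,k)|^{2/k} \leqs (1+O(e^{-8\ell_i}))|\mc{N}_i(t,1)|^2$; since the error factors multiply to $1+o(1)$ over $i$,
\[
\frac1T \int_\mc{G} |\mc{N}(t,k)|^{\tfrac{2}{k}} |\mc{N}(t,k-1)|^2 |D(t,-k)|^2 dt \ll \frac1T \int_T^{2T} \bigl|\mc{N}(t,1) \mc{N}(t,k-1) D(t,-k)\bigr|^2 dt.
\]

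Next I would evaluate this mean square. The product is a short Dirichlet polynomial (length $\leqs T^{1/25} X^{W_0} = T^{1/25+o(1)}$ using \eqref{sum length} and $X\leqs T^{1/(\log_2 T)^2}$), so Montgomery--Vaughan \eqref{MVT} equates it with $(1+o(1))\sum_n |c(n)|^2/n$, which factors as an Euler product $\prod_p E_p$ since the $\mc{N}_i$ have disjoint prime support. The crucial cancellation occurs at primes $A<p\leqs X$: the coefficient of $p^{-1/2-it}$ in the product is $1 + (k-1) + \alpha_{-k}(p) = k-k = 0$, using $\alpha_{-k}(p) = -k$ from Lemma \ref{P lem} and the modified definition of $D$ (the remark after Proposition \ref{upper prop 1}). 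Thus $E_p = 1 + O(p^{-2})$ for $A < p \leqs X$, while $E_p = 1 + k^2/p + O(p^{-2})$ for $X < p \leqs T_J$. Prime-power contributions $p^m$ with $m\geqs 2$ are $O(p^{-2})$ and absorbed. Mertens' theorem then yields the Euler product $\asymp (\log T_J/\log X)^{k^2} \ll (\log T/\log X)^{k^2}$, as required.

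For the bad set $\mc{G}^c = \cup_i \mc{G}_i^c$, Lemma \ref{sound lem 2} with the optimal choice of the moment parameter (comparing $V=\ell_i$ against the variance $\sum_{T_{i-1}<p\leqs T_i}1/p$) gives $\mu(\mc{G}_i^c) \ll T\exp(-c(\log_2 T)^2)$, which is superpolylogarithmically small. To bound $\int_{\mc{G}^c}$, I would use the crude inequality $|\mc{N}(t,k)|^{2/k} \leqs 1 + |\mc{N}(t,k)|^{2\lceil 1/k\rceil}$ to reduce to an integer power, then apply Cauchy--Schwarz; the resulting higher mean square is a short Dirichlet polynomial mean square that grows only as $(\log T)^{O(1)}$, which is overwhelmed by the rapid decay of $\mu(\mc{G}^c)^{1/2}$. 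The principal obstacle is the arithmetic bookkeeping in the middle step: verifying the exact cancellation $k + \alpha_{-k}(p) = 0$ at primes in $(A,X]$ and confirming that the convolutional contributions at prime powers only contribute $O(p^{-2})$, so that the Euler product delivers precisely the exponent $k^2$.
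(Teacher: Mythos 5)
The good-set portion of your plan is sound: on $\mathcal{G}$ the comparison $|\mathcal{N}(t,k)|^{2/k}|\mathcal{N}(t,k-1)|^2 \asymp |\mathcal{N}(t,1)\mathcal{N}(t,k-1)|^2$ via \eqref{exp trunc}, the factorization into a short Dirichlet polynomial, and the Euler-product computation showing cancellation at primes $A<p\leqs X$ and a factor $1+k^2/p$ at $X<p\leqs T_J$, all check out and deliver $(\log T/\log X)^{k^2}$.

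The gap is in the bad-set estimate. Your claim $\mu(\mathcal{G}_i^c)\ll T\exp(-c(\log_2 T)^2)$ is false for $i$ near $J$. There $\theta_i$ is a fixed constant (about $10^{-12}$), so $\ell_i=\theta_i^{-3/4}$ is also a fixed constant (about $10^9$), the variance $\sum_{T_{i-1}<p\leqs T_i}1/p = 1+o(1)$ is bounded, and the admissible moment parameter in Lemma~\ref{sound lem 2} is constrained to $m\lesssim 1/\theta_i$, again a constant. Chebyshev then yields only $\mu(\mathcal{G}_i^c)\ll T e^{-c_i}$ for a constant $c_i$ that does \emph{not} grow with $T$: the exceptional set for the longest pieces $\mathcal{P}_j$ has positive density in $[T,2T]$. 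Consequently $\mu(\mathcal{G}^c)/T$ is merely a small constant, and your Cauchy--Schwarz step produces a bound of the shape $(\text{const})\cdot(\log T)^{O(1)}$, which overwhelms the target $(\log T/\log X)^{k^2}$ when $X$ is of logarithmic size. (There is also a secondary issue: for $k$ small, $|\mathcal{N}(t,k)|^{4\lceil 1/k\rceil}$ may no longer be a short Dirichlet polynomial after Cauchy--Schwarz.)

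This is precisely the obstruction the Harper/Radziwi\l\l--Soundararajan machinery is built to avoid, and it is why the paper does not attempt a good-set/bad-set dichotomy. Instead Lemma~\ref{lem10} gives a \emph{pointwise} majorization valid for all $t$, namely $|\mathcal{N}_j(t,k-1)\mathcal{N}_j(t,k)^{1/k}|^2 \leqs |\mathcal{N}_j(t,k)|^2(1+O(e^{-9\ell_j})) + O(\mathcal{Q}_j(t))$, where $\mathcal{Q}_j$ is a genuine Dirichlet polynomial whose \emph{first moment} is tiny ($\ll e^{-10\ell_j}$ by Lemma~\ref{lem11}) even though the set where $|\mathcal{P}_j(t)|>\ell_j$ has measure $\gg T$. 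These pointwise errors are then multiplied across $j$ by the coprime-support factorization \eqref{4.6} and absorbed. To repair your argument you would need to replace the measure-of-bad-set step by such a first-moment/pointwise-majorant step; the measure estimate you invoke cannot be recovered for the long pieces $\mathcal{P}_j$.
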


Note that our argument works unconditionally provided $X\leqs \eta_k(\log T)^2(\log_2 T)^2$ as claimed in the introduction. 

%%%%%%%%%%%%%% LOWER PROP 

\subsection{Proof of Proposition \ref{lower prop} }

Our first job is to extend the range of integration to the full set $[T,2T]$. 
By the usual argument involving H\"older's inequality the integral over $\mc{E}$ is $o(1)$. Indeed, from the conditions on $X$ and Lemma \ref{E meas lem} we have $T^{-1}\mu(\mc{E})\ll e^{-2|k|V_0}$ which is enough to kill any power of $\log T$. We also have the second moment bound for the zeta function, and by \eqref{D mth moment} the $m$th moment of $D(t,-k)$ is also $(\log T)^{O(1)}$. The only new ingredient required is a moment bound for $\mc{N}(t,k)$ but by the Montgomery--Vaughan mean value theorem this is, for $m\leqs 50$, 
\begin{align*}
\frac{1}{T}\int_{T}^{2T}|\mc{N}(t,k)|^{2m}dt
\ll &
\sum_{\substack{n_1\cdots n_m=n_{m+1}\cdots n_{2m}\\ n_j\leqs T^{1/50}}} \frac{\gamma_k(n_1)\cdots \gamma_k(n_{2m})}{(n_1\cdots n_{2m})^{1/2}}
\\
= &
\prod_{p\leqs T}\bigg(1+\frac{m^2k^2}{p}+O(p^{-2})\bigg)\ll (\log T)^{m^2k^2}.
\end{align*}
Therefore, 
\[
\frac1T\int_\mc{S}\zeta(\tfrac{1}{2}+it)\mc{N}(t,k-1)\overline{\mc{N}(t,k)}|D(t,-k)|^{2}dt
=
I_3+o(1)
\]
where 
\[
I_3
=
\frac1T\int_T^{2T}\zeta(\tfrac{1}{2}+it)\mc{N}(t,k-1)\overline{\mc{N}(t,k)}|D(t,-k)|^{2}dt.
\]

To lower bound $I_3$ we apply the approximation 
\[
\zeta(\tfrac{1}{2}+it)=\sum_{n\leqs T} \frac{1}{n^{1/2+it}}+O\Big(\frac{1}{T^{1/2}}\Big).
\]
The other terms of the integrand satisfy pointwise bounds of the form 
\[
\mc{N}(t,k)\ll Y^{1/2+\epsilon}\ll T^{1/100+\epsilon},\qquad \sum_{n} \frac{\alpha_{-k}(n)}{n^{1/2+it}}\ll X^{W_0(1/2+\epsilon)}\ll  T^{1/100};
\]
since $k^{\Omega(n)}$ has average order $(\log n)^{k-1}$ and $\alpha_{-k}(n)\ll d_k(n)\ll n^\epsilon$. We then see that the error term in the approximation for zeta leads to an error of size $o(1)$.

Therefore
\[
I_3
=
\frac1T\int_{T}^{2T} \sum_{\substack{n_1\leqs T,\,\, n_2,n_3\leqs Y}}\frac{\gamma_{k-1}(n_2)\gamma_k(n_3)\alpha_{-k}(n_4)\alpha_{-k}(n_5)}{(n_1n_2n_3n_4n_5)^{1/2}}\Big(\frac{n_3n_5}{n_1n_2n_4}\Big)^{it}dt+o(1).
\]
By direct integration, the off-diagonal terms for which $n_1n_2n_4\neq n_3n_5$ lead to an error of size
\[
\frac{YX^{W_0}}{T}\sum_{n_1\leqs T,\,\,n_2,n_3\leqs Y}\frac{|\gamma_{k-1}(n)|\gamma_k(n)}{(n_1n_2n_3)^{1/2}}\bigg(\sum_{n}\frac{|\alpha_{-k}(n)|}{n^{1/2}}\bigg)^2
\ll
\frac{Y^{2+\epsilon}X^{W_0(2+\epsilon)}}{T^{1/2}}=o(1)
\]
since $|\log(n_3n_5/n_1n_2n_4)|\gg 1/(YX^{W_0})$. Accordingly, 
\[
I_3
=
\sum_{\substack{n_1n_2n_4=n_3n_5\\n_1\leqs T,n_2,n_3\leqs Y}}\frac{\gamma_{k-1}(n_2)\gamma_k(n_3)\alpha_{-k}(n_4)\alpha_{-k}(n_5)}{(n_1n_2n_3n_4n_5)^{1/2}}+o(1).
\]
Since $n_3n_5\leqs YX^{W_0}\leqs T$ we may remove the condition $n_1\leqs T$. 

Now, since $\alpha_{-k}(n)$ is supported on prime powers $p^m$ with $A<p\leqs X$ we may write our sum as
\begin{align}\label{sum decomp}
\sum_{n_1n_2n_4=n_3n_5}
=
\sum_{\substack{n_1n_2n_4=n_3n_5\\p|n_j\implies A<p\leqs X}}
\cdot
\sum_{\substack{n_1n_2=n_3\\p|n_j\implies X<p\leqs T_0}}
\cdot
\prod_{i=1}^J\sum_{\substack{n_1n_2=n_3\\p|n_j\implies T_{i-1}<p\leqs T_i}} 
\end{align}
and note that we have essentially taken $n_4=n_5=1$ in the second two sums on the right. Unfolding the coefficients using \eqref{N prod} gives the first sum as 
\begin{equation}\label{first sum}
\sum_{\substack{n_1n_2n_4=n_3n_5\\p|n_j\implies A<p\leqs X\\\Omega(n_2),\Omega(n_3)\leqs 10\ell_0}}
\frac{(k-1)^{\Omega(n_2)}k^{\Omega(n_3)}\mf{g}(n_2)\mf{g}(n_3)\alpha_{-k}(n_4)\alpha_{-k}(n_5)}{(n_1n_2n_3n_4n_5)^{1/2}}
\end{equation}

The usual arguments now allow us to replace $\alpha_{-k}(n_j)$ with $\beta_{-k}(n_j)$ and remove the restrictions on $\Omega(n_2),\Omega(n_3)$ at the cost of a term of size $o(1)$. We then express the resultant sum as an Euler product. Since $\beta_{-k}(p)=-k$ we find that the leading terms cancel and that \eqref{first sum} is 
\begin{equation}\label{first sum bound}
\prod_{A< p\leqs X}\Big(1+O_k(p^{-2})\Big)+o(1).
\end{equation}
On taking $A$ sufficiently large we can guarantee that the term $O_k(p^{-2})$ is always $<1$ in modulus and hence the above product is $\geqs c$ for some constant $c>0$.

With a similar computation the second sum in \eqref{sum decomp} is 
\begin{equation}
\begin{split}
\label{second sum}
&\sum_{\substack{n_1n_2=n_3\\p|n_2,n_3\implies X<p\leqs T_0\\ \Omega(n_2),\Omega(n_3)\leqs 10\ell_0}}
\frac{(k-1)^{\Omega(n_2)}k^{\Omega(n_3)}\mf{g}(n_2)\mf{g}(n_3)}{(n_1n_2n_3)^{1/2}}
\\
= &
\sum_{\substack{n_1n_2=n_3\\p|n_2,n_3\implies X<p\leqs T_0}}
\frac{(k-1)^{\Omega(n_2)}k^{\Omega(n_3)}\mf{g}(n_2)\mf{g}(n_3)}{(n_1n_2n_3)^{1/2}}
+
O(e^{-10\ell_0}(\tfrac{\log T_0}{\log X})^{O(1)})
\\
= &
\prod_{X<p\leqs T_0}\bigg(1+\frac{k^2}{p}+O(p^{-2})\bigg)+o(1)
\geqs 
c\bigg(\frac{\log T_0}{\log X} \bigg)^{k^2}.
\end{split}
\end{equation}

For the sums inside the product in \eqref{sum decomp}, we must be a little more careful. The sums in question are given by  
\begin{equation}\label{third sum}
\sum_{\substack{n_1n_2=n_3\\p|n_2,n_3\implies T_{i-1}<p\leqs T_i\\ \Omega(n_2),\Omega(n_3)\leqs 10\ell_i}}
\frac{(k-1)^{\Omega(n_2)}k^{\Omega(n_3)}\mf{g}(n_2)\mf{g}(n_3)}{(n_1n_2n_3)^{1/2}}.
\end{equation}
Since $0<k\leqs 1$ and $d(n)\leqs 2^{\Omega(n)}\leqs e^{\Omega(n)}$, the error incurred from dropping the condition on $\Omega(n_2)$ is, in absolute value,
\begin{align*}
\leqs &
e^{-10\ell_i}\sum_{\substack{n_1n_2=n_3\\p|n_2,n_3\implies T_{i-1}<p\leqs T_i}}
\frac{e^{\Omega(n_2)}\mf{g}(n_2)\mf{g}(n_3)}{(n_1n_2n_3)^{1/2}}
\leqs
e^{-10\ell_i}\sum_{\substack{p|n\implies T_{i-1}<p\leqs T_i}}
\frac{e^{2\Omega(n)}\mf{g}(n)}{n}
\\
= &
\exp\Big(-10\ell_i+e^2\sum_{T_{i-1}<p\leqs T_i}\frac{1}{p}\Big)
\leqs 
\exp\Big(-10\ell_i+e^2\log(\tfrac{\log T_i}{\log T_{i-1}})+o(1)\Big)
\leqs 
e^{-9\ell_i}
\end{align*}
since $\log(\tfrac{\log T_i}{\log T_{i-1}})\leqs 2$ and $\ell_i\geqs 10^9$. Doing the same for $n_3$ gives an error of the same size and hence the sum in \eqref{third sum} is 
\begin{equation}
\begin{split}
\label{third sum 2}
\geqs &
\prod_{T_{i-1}<p\leqs T_i}\bigg(1+\frac{k^2}{p}+O(p^{-2})\bigg)
-
e^{-8\ell_i}
\\
\geqs &
\big(1-e^{-7\ell_i}\big)\prod_{T_{i-1}<p\leqs T_i}\bigg(1+\frac{k^2}{p}+O(p^{-2})\bigg).
\end{split}
\end{equation}
Hence, combining \eqref{first sum bound}, \eqref{second sum} and \eqref{third sum 2} we find that 
\[
I_3
\geqs 
c \bigg(\frac{\log T_0}{\log X}\bigg)^{k^2}\prod_{i=1}^J \big(1-e^{-7\ell_i}\big)\prod_{T_{i-1}<p\leqs T_i}\bigg(1+\frac{k^2}{p}+O(p^{-2})\bigg)
\geqs 
C  \bigg(\frac{\log T}{\log X}\bigg)^{k^2}
\]
since, again, $\sum_{i=1}^J e^{-7\ell_i}$ is a rapidly converging series. This completes the proof of Proposition \ref{lower prop}.

%%%%%%%%%%%%%%%%% PROP 7

\subsection{Proof of Proposition \ref{upper prop 1}}

We are required to show
\[
I_4
:=
\frac1T\int_{T}^{2T} |\zeta(\tfrac 12+it) {\mathcal N}(t,k-1)|^2|D(t,-k)|^{2} dt \ll\bigg(\frac{\log T}{\log X}\bigg)^{k^2}.
\]
From the conditions on $X$ given in the statement of the proposition, $D(t,-k)$ is a Dirichlet polynomial of length $X^{W_0}=T^{o(1)}$ which is still short. Thus, we can apply the results of \cite{BCH,BCR} after combining the two Dirichlet polynomials into a single sum. This gives 
\[
I_2=\sum_{m,n}\frac{h_k(m)h_k(n)(m,n)}{mn} \log \Big( \frac{B T (m,n)^2}{mn} \Big)+o(1)
\] 
for some constant $B$ where 
\[
h_k(n)=\sum_{n_1n_2=n}\gamma_{k-1}(n_1)\alpha_{-k}(n_2).
\]
As in \cite{HS}, we write 
$$ 
\log \Big(  \frac{B T (m,n)^2}{mn} \Big) = \frac{1}{2\pi i } \int_{|z|= 1/\log T} \Big( \frac{BT (m,n)^2}{mn}\Big)^z \frac{dz}{z^2}, 
$$
so that the main term in $I_2$ becomes
$$ 
\frac{1}{2\pi i } \int_{|z|= 1/\log T}\sum_{m, n} \frac{h_k(m)h_k(n)(m,n)}{mn}  \Big( \frac{BT(m,n)^2}{mn}\Big)^z  \frac{dz}{z^2}. 
$$ 
Then after a trivial estimate we get
\begin{equation}\label{I_2 bound}
I_4 \ll (\log T)\max_{|z|=1/\log T}\bigg|\sum_{m, n} \frac{h_k(m)h_k(n)(m,n)^{1+2z}}{(mn)^{1+z}}   \bigg|
\end{equation}
and thus we are required to show this last sum is $\ll \tfrac{1}{\log X}(\log T/\log X)^{k^2-1}$.

Unfolding the coefficients, the  above sum becomes
\begin{align*}
\sum_{m_1,m_2,n_1,n_2}\frac{\gamma_{k-1}(m_1)\gamma_{k-1}(n_1)\alpha_{-k}(m_2)\alpha_{-k}(n_2)(m_1m_2,n_1n_2)^{1+2z}}{(m_1m_2n_1n_2)^{1+z}}. 
\end{align*}
Estimating the terms with $\Omega(m_2),\Omega(n_2)\geqs W_0$ in the usual way we may replace $\alpha_{-k}(n)$ by $\beta_{-k}(n)$ and then re-extend the sums at the cost of $o(1)$.
Then by multiplicativity we can express the resultant sum as 
\[
\sum_{\substack{m_1,m_2,n_1,n_2\\p|m_jn_j\implies A<p\leqs X}}
\cdot
\sum_{\substack{m_1,n_1\\p|m_1n_1\implies X< p\leqs T_0}}
\cdot
\prod_{i=1}^J
\sum_{\substack{m_1,n_1\\p|m_1n_1\implies T_{i-1}< p\leqs T_i}}
\]
where, again, we have taken $m_2=n_2=1$ in the second two sums since the functions $\beta_k(n)$ are supported on $X$-smooth numbers. As usual, we drop the conditions on $\Omega(m_j),\Omega(n_j)$ in these sums. Considering, for the moment, the first sum without these conditions we get 
\begin{align*}
&
\prod_{A<p\leqs X}\bigg(1+\sum_{\substack{0\leqs m_j,n_j\leqs 1\\m_1+m_2+n_1+n_2\neq 0}}\frac{(k-1)^{m_1+n_1}(-k)^{m_2+n_2}(p^{m_1+m_2},p^{n_1+n_2})^{1+2z}}{p^{(m_1+m_2+n_1+n_2)(1+z)}}+O(p^{-2})\bigg)
\\
= &
\prod_{A<p\leqs X}\bigg(1+\frac{2(k-1)-2k}{p^{1+z}}+\frac{(k-1)^2+k^2-2k(k-1)}{p}+O(p^{-2})\bigg)
\\
= &
\prod_{A<p\leqs X}\bigg(1-\frac{1}{p}+O(p^{-2})+O\Big(\frac{1}{\log T}\frac{\log p}{p}\Big)\bigg)
\ll 
\frac{1}{\log X}
\end{align*}
where we have used $\beta_{-k}(n)\ll d_k(n)$ and $|z|\leqs 1/\log T$ along with the bound $\frac{1}{\log T}\sum_{p\leqs X}(\log p)/p\ll 1$.
As usual, for the error term we apply Rankin's trick along with similar Euler product computations to give an error of size 
\[
\ll e^{-10\ell_0+O(\log\log X)}
\]
which is $o(1/\log X)$. 

For the second sum we get, with a similar argument, 
\begin{align*}
\sum_{\substack{m_1,n_1\\p|m_1n_1\implies X< p\leqs T_0}}
&
\frac{\gamma_{k-1}(m_1)\gamma_{k-1}(n_1)(m_1,n_1)^{1+2z}}{(m_1n_1)^{1+z}} 
\\
= &
\prod_{X<p\leqs T_0}\bigg(1+\frac{k^2-1}{p}+O(p^{-2})\bigg)+O(e^{-10\ell_0+O(\log(\log T_0/\log X))} )
%\\
%\ll &
%\bigg(\frac{\log T_0}{\log X}\bigg)^{k^2-1}
\end{align*}
which is $\ll (\frac{\log T_0}{\log X})^{k^2-1}$. Finally, for the product of sums we have 
\begin{align*}
\prod_{i=1}^J
&
\sum_{\substack{m_1,n_1\\p|m_1n_1\implies T_{i-1}< p\leqs T_i}}
\frac{\gamma_{k-1}(m_1)\gamma_{k-1}(n_1)(m_1,n_1)^{1+2z}}{(m_1n_1)^{1+z}} 
\\
= &
\prod_{i=1}^J
\bigg[
\prod_{T_{i-1}<p\leqs T_i}\bigg(1+\frac{k^2-1}{p}+O(p^{-2})\bigg)+O(e^{-10\ell_i+O(\log(\log T_i/\log T_{i-1}))} )
\bigg]
\\
= &
\prod_{i=1}^J
\bigg[\big(1+O(e^{-10\ell_i} )\big)
\prod_{T_{i-1}<p\leqs T_i}\bigg(1+\frac{k^2-1}{p}+O(p^{-2})\bigg)
\bigg]
\end{align*}
since $\log T_i/\log T_{i-1}\leqs 2$. This last term is $\ll (\frac{\log T}{\log T_0})^{k^2-1}$ and so combining these bounds in \eqref{I_2 bound} we get 
\[
I_4 \ll \log T\cdot\frac{1}{\log X}\cdot \bigg(\frac{\log T_0}{\log X}\bigg)^{k^2-1}\cdot\bigg(\frac{\log T}{\log T_0}\bigg)^{k^2-1}\ll \bigg(\frac{\log T}{\log X}\bigg)^{k^2}
\]
which completes the proof of Proposition \ref{upper prop 1}.

%%%%%%%%%%%%%%%%% PROP 8

\subsection{Proof of Proposition \ref{upper prop 2} }\label{prop 9 subsec} We begin with the following lemma from \cite{HS}. 

\begin{lem} \label{lem10} Let
\[
\mathcal{P}_{j}(t) := \sum_{T_{j-1} < p \leqs T_{j}} \frac{1}{p^{1/2+it}}.
\]
Then for $0\leqs j\leqs J$ 
\[ 
|{\mathcal N}_j(t, k-1) {\mathcal N}_j(t, k)^{\frac{1}{k}}|^2 \leqs 
|{\mathcal N}_j(t, k)|^2 (1+ O(e^{-9\ell_j})) + O\big( {\mathcal Q}_j(t)\big), 
\] 
where the implied constants are absolute, and 
\[ 
{\mathcal Q}_j(t) =\Big( \frac{e |{\mathcal P}_j(t)|}{10\ell_j} \Big)^{20\ell_j} \sum_{r=0}^{10\ell_j/k} \Big( \frac{2e |{\mathcal P}_j(t)|}{r+1} \Big)^{2r}.
\]

\end{lem}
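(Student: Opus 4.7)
The plan is to write $w = \mc{P}_j(t)$ and exploit the identity
\[
\mc{N}_j(t,m) = \sum_{r=0}^{10\ell_j} \frac{(mw)^r}{r!},
\]
which realises $\mc{N}_j(t,m)$ as the $10\ell_j$-th Taylor truncation of $e^{mw}$, and then split the argument into two regimes based on the size of $|w|$ relative to $\ell_j$. Throughout I use the hypothesis of the surrounding section that $0 < k \leqs 1$, so that $|k|,|k-1|\leqs 1$.

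In the regime where $|w|$ is comparable to or smaller than $\ell_j$, the truncation bound \eqref{exp trunc} applies to both $m = k$ and $m = k-1$ and gives $\mc{N}_j(t,m) = e^{mw}(1 + O(e^{-9\ell_j}))$; the weaker exponent $e^{-9\ell_j}$ (rather than the absolute $e^{-10\ell_j}$) comes from dividing the truncation error by $|e^{mw}|\geqs e^{-|m||w|}\geqs e^{-\ell_j}$. Taking moduli and the appropriate powers then yields
\[
|\mc{N}_j(t,k-1)|^{2}|\mc{N}_j(t,k)|^{2/k} = e^{2k\Re w}(1 + O(e^{-9\ell_j})) = |\mc{N}_j(t,k)|^{2}(1 + O(e^{-9\ell_j})),
\]
which is the first term on the right-hand side of the claim.

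In the opposite regime, where $|w|$ is large enough that the exponential approximation loses effectiveness, one falls back on crude triangle-inequality bounds. Since $|k-1|\leqs 1$, one has $|\mc{N}_j(t,k-1)|^{2}\leqs(\sum_{r=0}^{10\ell_j}|w|^r/r!)^{2}$; for $|w|$ in this range this sum is dominated by its final terms, and Stirling's inequality $r!\geqs((r+1)/e)^{r}$ converts the square of the last term into the prefactor $(e|w|/(10\ell_j))^{20\ell_j}$ appearing in $\mc{Q}_j$. For $|\mc{N}_j(t,k)|^{2/k}$ one expands term-by-term using $(k|w|)^r/r!\leqs(ek|w|/(r+1))^r$; the $1/k$-th power swells the effective summation range from $10\ell_j$ to $10\ell_j/k$ and rescales each term by a factor of $2$, producing the sum $\sum_{r=0}^{10\ell_j/k}(2e|w|/(r+1))^{2r}$.

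The main obstacle is handling the fractional power $|\mc{N}_j(t,k)|^{2/k}$ in the second regime, since a Dirichlet polynomial has no natural $1/k$-th root, so one cannot proceed by a multiplicative identity. The cleanest route would be to compare $|\mc{N}_j(t,k)|^{1/k}$ to the truncated exponential $|\mc{N}_j(t,1)|$ via their common approximation by $|e^w|$, absorbing the discrepancy into $\mc{Q}_j$; alternatively a Hölder-type inequality could be used to redistribute the fractional exponent across the summands. Either way, the bookkeeping required to match the exact exponents and summation ranges in the definition of $\mc{Q}_j(t)$ is the most delicate step.
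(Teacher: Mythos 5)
Your two-regime structure matches what the paper does: the paper does not give a self-contained proof of this lemma (it simply cites Lemma 1 of \cite{HS}), but the analogous Lemma \ref{N frac lem 2} for $k\geqs 1$ is proved in the paper along exactly the lines you propose — split according to the size of $\mc{P}_j(t)$, use the truncated-exponential identity and \eqref{exp trunc} in the small regime, and crude term-by-term bounds in the large regime. Your treatment of the small-$|\mc{P}_j(t)|$ regime is correct, including the source of the $e^{-9\ell_j}$ (the truncation error $e^{-10\ell_j}$ divided by $|e^{m\mc{P}_j(t)}|\geqs e^{-\ell_j}$). Your bound for $|\mc{N}_j(t,k-1)|^2$ in the large regime is also essentially right, though the ``sum dominated by its final terms'' phrasing loses a factor $(10\ell_j+1)^2$; the cleaner route is the factoring used in the paper's proof of Lemma \ref{N frac lem 2}: write $|\mc{P}_j|^r/r! = |\mc{P}_j|^{10\ell_j}\cdot |\mc{P}_j|^{r-10\ell_j}/r!\leqs |\mc{P}_j|^{10\ell_j}(10\ell_j)^{r-10\ell_j}/r!$ and sum, which gives $(e|\mc{P}_j|/10\ell_j)^{10\ell_j}$ directly with no extra polynomial factor.

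However, there is a genuine gap, and you identify it yourself: you do not actually establish
$|\mc{N}_j(t,k)|^{2/k}\ll\sum_{r=0}^{10\ell_j/k}\big(2e|\mc{P}_j(t)|/(r+1)\big)^{2r}$
in the large regime. The sentences ``the $1/k$-th power swells the effective summation range from $10\ell_j$ to $10\ell_j/k$ and rescales each term by a factor of $2$'' describe the \emph{shape} of the answer but are not an argument — there is no inequality that turns a $1/k$-th power of a sum into a sum over a longer range with rescaled terms without explicit work (e.g.\ via $(\sum_{r\leqs N}a_r)^{p}\leqs (N+1)^{p-1}\sum a_r^{p}$, or via the crude bound $|\mc{N}_j(t,k)|\leqs e^{k|\mc{P}_j(t)|}$ followed by matching against the terms of the sum near $r\approx 2|\mc{P}_j(t)|$, which requires tracking where that peak sits relative to the cutoff $10\ell_j/k$). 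This step is precisely what makes the $0<k\leqs 1$ case harder than the $k\geqs 1$ case of Lemma \ref{N frac lem 2}, where the exponent $2k/(2k-1)\leqs 2$ means one can simply square the crude bound; here $2/k$ can be arbitrarily large. Finally, your suggested ``cleanest route'' of comparing $|\mc{N}_j(t,k)|^{1/k}$ to $|\mc{N}_j(t,1)|$ via their common approximation by $|e^{\mc{P}_j(t)}|$ cannot work in the large regime: that approximation is exactly what fails there, which is why the second term $\mc{Q}_j(t)$ is needed at all.
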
 
\begin{proof}
This is essentially Lemma 1 of \cite{HS}. Our sequence $T_j$ is defined slightly differently but one can check that this makes no difference to the end result. 
\end{proof}

\begin{lem} 
\label{lem11} With the above notation
\[
\frac{1}{T}\int_T^{2T} {\mathcal Q}_0(t) |D(t,-k)|^{2}dt \ll  e^{-10\ell_0}(\log X)^{2k^2}
\] 
and for $1\leqs j\leqs J$
\[
\frac{1}{T}\int_T^{2T} {\mathcal Q}_j(t) dt \ll  e^{-10\ell_j}. 
\] 
\end{lem}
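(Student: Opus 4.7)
The plan is to reduce both assertions to moment estimates for the prime sum $\mc{P}_j(t)$, which are supplied directly by Lemma \ref{sound lem 2}. Integrating $\mc{Q}_j(t)$ term by term yields
\[
\frac{1}{T}\int_T^{2T}\mc{Q}_j(t)\,dt
=
\Big(\frac{e}{10\ell_j}\Big)^{20\ell_j}\sum_{r=0}^{\lfloor 10\ell_j/k\rfloor}\Big(\frac{2e}{r+1}\Big)^{2r}\cdot\frac{1}{T}\int_T^{2T}|\mc{P}_j(t)|^{2(10\ell_j+r)}dt,
\]
and Lemma \ref{sound lem 2} bounds the inner integral by $\ll m!\,P_j^m$, where $P_j:=\sum_{T_{j-1}<p\leqs T_j}1/p$ and $m=10\ell_j+r$. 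The legality hypothesis $T_j^m\leqs T/\log T$ is comfortably satisfied for $m\leqs(10+10/k)\ell_j$, since $\ell_j=\theta_j^{-3/4}$ and $\theta_j\leqs\theta_J\leqs 10^{-12}$.

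For $j\geqs 1$, Mertens' theorem gives $P_j=\log(\log T_j/\log T_{j-1})+o(1)=1+o(1)$. Writing $M:=10\ell_j$ and $N:=M+r$, Stirling together with the elementary bound $(N/M)^N\leqs e^{r(1+1/k)}$ (valid since $N\leqs(1+1/k)M$) collapses the previous display into
\[
\frac{1}{T}\int_T^{2T}\mc{Q}_j(t)\,dt
\ll \frac{e^M\sqrt{M}}{M^M}\sum_{r=0}^{\lfloor M/k\rfloor}\bigg(\frac{2e\sqrt{M}\,e^{1/(2k)}}{r+1}\bigg)^{\!2r}
\ll \frac{e^M\sqrt{M}}{M^M}\,e^{C_k\sqrt{M}},
\]
the $r$-sum being dominated by its maximal term at $r+1\asymp\sqrt{M}$. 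Since $M=10\ell_j\geqs 10^{10}$, the factor $M^{-M}$ supplies decay $e^{-M\log M}$ that dwarfs both $e^M$ and $e^{C_k\sqrt{M}}$, yielding a bound $\ll e^{-M}=e^{-10\ell_j}$ with considerable room to spare.

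For $j=0$, the primes in $\mc{P}_0(t)$ and those in $D(t,-k)$ may overlap (since $X\leqs T_0$), so I would first decouple via Cauchy--Schwarz:
\[
\frac{1}{T}\int|\mc{P}_0|^{2N}|D(t,-k)|^2\,dt
\leqs
\bigg(\frac{1}{T}\int|\mc{P}_0|^{4N}\,dt\bigg)^{\!1/2}
\bigg(\frac{1}{T}\int|D(t,-k)|^4\,dt\bigg)^{\!1/2}.
\]
The second factor is $\ll(\log X)^{2k^2}$ by the fourth-moment estimate \eqref{D fourth}. For the first, Lemma \ref{sound lem 2} and Stirling give a bound of order $(2NP_0/e)^N N^{1/4}$, where now $P_0=\sum_{A<p\leqs T_0}1/p\sim\log_2 T$ by Mertens. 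Rerunning the preceding argument with $M=10\ell_0=10(\log_2 T)^{3/2}$, the dominant factor becomes
\[
(2eP_0/M)^M=\exp\!\Big(-(\tfrac12+o(1))M\log_3 T\Big),
\]
which easily absorbs both the $e^{C_k\sqrt{MP_0}}$ contribution from the $r$-sum and the extra $P_0^M$ growth, producing decay far stronger than the required $e^{-10\ell_0}$ while leaving the $(\log X)^{2k^2}$ factor intact.

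The only substantive step is arithmetic bookkeeping: verifying that the pre-factor $(e/10\ell_j)^{20\ell_j}$ combined with the Stirling main term $N^N e^{-N}$ produces enough exponential decay to swallow both the geometric-type $r$-sum and, for $j=0$, the extra $P_0^M$ factor. No conceptual obstacle arises once Lemma \ref{sound lem 2} is in hand.
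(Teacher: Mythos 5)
Your proposal is correct and follows essentially the same route as the paper's proof: expand $\mc{Q}_j$ term by term, invoke the moment bound for $\mc{P}_j$ (you cite Lemma \ref{sound lem 2}; the paper applies the Montgomery--Vaughan theorem directly to the Dirichlet polynomial $\mc{P}_0^{2L+2r}$, which amounts to the same estimate), Cauchy--Schwarz against $\int|D(t,-k)|^4dt\ll(\log X)^{4k^2}$ in the $j=0$ case, and then locate the dominant term of the $r$-sum via Stirling. The only point worth flagging is that the legality check ``$m\leqs(10+10/k)\ell_j$ implies $T_j^m\leqs T/\log T$'' for $j\geqs1$ is not quite as comfortable as you assert when $k$ is very small, since for $j$ near $J$ one only has $\theta_j^{1/4}\leqs10^{-3}$ and hence $m\theta_j\leqs10^{-2}(1+1/k)$, which requires $k$ bounded below; this is a constant-chasing issue shared with the paper (which defers the $j\geqs1$ case to Heap--Soundararajan) and does not affect the substance of the argument.
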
 

\begin{proof}
We prove the first bound since this is new, the second bound follows similarly (and is essentially Lemma 2 of \cite{HS}). Let $L=10\ell_0$. From the definition of $\mc{Q}_0(t)$ we have 
\begin{align*}
\frac{1}{T}\int_T^{2T} & {\mathcal Q}_0(t) |D(t,-k)|^{2}dt 
\\
= &
\Big( \frac{12}{L} \Big)^{2L} \sum_{r=0}^{L/k} \Big( \frac{2e}{r+1} \Big)^{2r}
\cdot
\frac{1}{T}\int_T^{2T}|{\mathcal P}_0(t)|^{2L+2r}|D(t,-k)|^{2}dt
\end{align*}
By the Cauchy--Schwarz inequality and \eqref{D fourth} the integral is  
\[
\ll (\log X)^{2k^2} \bigg(\frac{1}{T}\int_T^{2T}|{\mathcal P}_0(t)^{2L+2r}|^2dt\bigg)^{1/2}.
\]
Since 
\[
{\mathcal P}_0(t)^{2L+2r}
=
(2L+2r)!
\sum_{\substack{\Omega(n)=2L+2r\\p|n\implies p\leqs T_0}}\frac{\mf{g}(n)}{n^{1/2+it}},
\]
the Montgomery--Vaughan mean value theorem gives that our original integral is
\begin{equation}
\begin{split}
\label{Q int bound}
\ll &
(\log X)^{2k^2}
\Big( \frac{12}{L} \Big)^{2L} \sum_{r=0}^{L/k} \Big( \frac{2e}{r+1} \Big)^{2r}
\cdot
\bigg((2L+2r)!^2 \sum_{\substack{\Omega(n)=2L+2r\\p|n\implies p\leqs T_0}}\frac{\mf{g}(n)^2}{n}\bigg)^{1/2}
\\
\leqs &
(\log X)^{2k^2}
\Big( \frac{12}{L} \Big)^{2L} \sum_{r=0}^{L/k} \Big( \frac{2e}{r+1} \Big)^{2r}
\cdot
(2L+2r)!^{1/2}\bigg(\sum_{p\leqs T_0}\frac{1}{p}\bigg)^{L+r}
\end{split}
\end{equation}
since $\mf{g}(n)^2\leqs \mf{g}(n)$. Letting $P=\sum_{p\leqs T_0}p^{-1}$ we find by Stirling's formula that the summand is 
\[
\ll 
(2/e)^{L}(8e)^{r} (r+1)^{-2r} ({L+r})^{L+r+{1}/{4}}P^{L+r}
\]
which is maximised at the solution of $r^2=8P(L+r)(1+O(1/r))$. Since $P\leqs 2\log\log T=o(L)$ this solution $r=r_0$ satisfies 
$2\sqrt{2}(PL)^{1/2}\leqs r_0\leqs 3(PL)^{1/2}$. Therefore, \eqref{Q int bound} is 
\[
\ll 
(\log X)^{2k^2}
\Big( \frac{c}{L} \Big)^{2L}\cdot \frac{L}{k} \cdot
(3L)!^{1/2}P^{L+r_0}r_0^{-2r_0}
\]
since $2r_0\leqs L$. This is then 
\[
\ll 
(\log X)^{2k^2}L^{-L/2+o(1)}
\ll 
e^{-10\ell_0}
\]
and the result follows.
\end{proof}

By Lemma \ref{lem10} and \eqref{4.6} we find that 
\begin{align*}
I_5
:= &
\frac1T\int_T^{2T}  |{\mathcal N}(t,k)|^{\frac{2}{k}} |{\mathcal N}(t,k-1)|^2 |D(t,-k)|^{2}dt
\\
\ll &
\frac1T\int_T^{2T}\Big(|{\mathcal N}_0(t, k)|^2 (1+ O(e^{-10\ell_0})) + O\big( {\mathcal Q}_0(t)\big)\Big) |D(t,-k)|^{2}dt
\\
& \times
\prod_{j=1}^{J+1} 
\frac1T\int_T^{2T}\Big(|{\mathcal N}_j(t, k)|^2 (1+ O(e^{-10\ell_j})) + O\big( {\mathcal Q}_j(t)\big)\Big)dt
\end{align*}
since $J\asymp \log\log \log T$.
By Lemma \ref{lem11} we have 
\begin{equation}\label{first prod}
\begin{split}
&
\prod_{j=1}^{J} 
\frac1T\int_T^{2T}\Big(|{\mathcal N}_j(t, k)|^2 (1+ O(e^{-10\ell_j})) + O\big( {\mathcal Q}_j(t)\big)\Big)dt
\\
= &
\prod_{j=1}^{J} 
\bigg( (1+ O(e^{-10\ell_j}))\sum_{\substack{\Omega(n)\leqs 10\ell_j\\ p|n\implies T_{i-1}<p\leqs T_i}}\frac{k^{2\Omega(n)}\mf{g}(n)^2}{n}+O(e^{-10\ell_j})\bigg)
\\
\ll &
\prod_{j=1}^{J} \prod_{T_{i-1}<p\leqs T_i}\bigg(1+\frac{k^2}{p}+O(p^{-2})\bigg)
\ll 
\bigg(\frac{\log T}{\log T_0}\bigg)^{k^2}.
\end{split}
\end{equation}
By Lemma \ref{lem11} again we find 
\begin{align*}
&
\frac1T\int_T^{2T}\Big(|{\mathcal N}_0(t, k)|^2 (1+ O(e^{-10\ell_0})) + O\big( {\mathcal Q}_0(t)\big)\Big) |D(t,-k)|^{2}dt
\\ = &
(1+ O(e^{-10\ell_0})) \frac1T\int_T^{2T}|{\mathcal N}_0(t, k)|^2 \bigg|\sum_n \frac{\alpha_{-k}(n)}{n^{{1}/{2}+it}}\bigg|^{2}dt +o(1).
\end{align*}
This last integral is 
\[
\sum_{\substack{m_1n_1=m_2n_2\\\Omega(m_j)\leqs 10\ell_0\\ p|m_j\implies A<p\leqs T_0}}\frac{k^{\Omega(m_1)+\Omega(m_2)}\mf{g}(m_1)\mf{g}(m_2)\alpha_{-k}(n_1)\alpha_{-k}(n_2)}{(m_1m_2n_1n_2)^{1/2}}+o(1).
\]
The usual arguments allow us to remove the conditions on $\Omega(m_j)$ and replace $\alpha_{-k}(n)$ by $\beta_{-k}(n)$ at the cost of  $o(1)$. We then find that the resultant sum is 
\[
\prod_{A<p\leqs X}\Big(1+O(p^{-2})\Big)\prod_{X<p\leqs T_0}\Big(1+\frac{k^2}{p}+O(p^{-2})\Big)
\ll
\bigg(\frac{\log T_0}{\log X}\bigg)^{k^2} 
\]
since the leading terms cancel over $A<p\leqs X$. 
Combining this with \eqref{first prod} gives
\[
I_5\ll \bigg(\frac{\log T}{\log X}\bigg)^{k^2} 
\] 
thus completing the proof of Proposition \ref{upper prop 2}.

%%%%%%%%%%%%%%% LOWERR 2 %%%%%%%%%%%

\section{The lower bound of Proposition \ref{Z prop} for $k\geqs 1$} \label{lower 2}

The lower bound for $k\geqs 1$ is similar to the case $0\leqs k\leqs 1$, if not a little simpler. In this case we take 
\[T_{-1}=Bk^2\]
for some $B>0$ to be chosen 
and we alter the definition of $J$ slightly so that it is the maximal integer such that $e^J/(\log\log T)^2\leqs 1/(10^{12}k^4)$. This implies that
$\ell_j=\theta_j^{-3/4}\geqs 10^9k^3$ for all $j\leqs J$.

We perform H\"older's inequality in the form 
\begin{multline}
\label{holder 2}
\Big|\frac{1}{T}\int_\mc{S}\zeta(\tfrac{1}{2}+it)\mc{N}(t,k-1)\overline{\mc{N}(t,k)}|D(t,-k)|^{2}dt
\Big| 
\\
\ll \Big( \frac{1}{T}\int_T^{2T} |Z_X(\tfrac 12+it)|^{2k} dt \Big)^{\frac{1}{2k}} \Big( \frac{1}{T}\int_T^{2T} |{\mathcal N}(t, k-1) {\mathcal N}(t, k)|^{\frac{2k}{2k-1}} |D(t,-k)|^2dt \Big)^{\frac{2k-1}{2k}} 
\end{multline}
where again we have used Lemma \ref{P lem}. The integral on the left can be dealt with rather similarly to Proposition \ref{lower prop} although the change in parameters requires some modifications. We detail these alterations first before dealing with the second integral on the right.

\subsection{Modifying the proof of Proposition \ref{lower prop}}

We see that we can arrive at \eqref{sum decomp} in exactly the same way. When dealing with \eqref{first sum}, the errors incurred from dropping the conditions on $\Omega(n_2)$ etc. $\!\!$ are now 
\[
\ll e^{-10\ell_0}(\log X)^{Ck^2}
\] 
which of course is still $o(1)$ since $k$ is fixed. Note that on writing \eqref{first sum} as a single sum $\sum_n f(n)/n$, the coefficients are supported on $X$-smooth numbers and satisfy the bounds $|f(n)|\leqs k^{2\Omega(n)}d_3(n)^2\leqs (3k)^{2\Omega(n)}$. Then we find that the equivalent of \eqref{first sum} is 
\[
\prod_{Bk^2<p\leqs X}\bigg(1+O\Big(\frac{k^4}{p^2}\Big)\bigg)+o(1)\geqs C
\]
for some $C$ on taking $B$ large enough. In a similar way we find that the equivalent of \eqref{second sum} is 
\[
\prod_{X<p\leqs T_0}\bigg(1+\frac{k^2}{p}+O\Big(\frac{k^4}{p^2}\Big)\bigg)+o(1)\geqs C\bigg(\frac{\log T_0}{\log X}\bigg)^{k^2}.
\]

It remains to deal with the term 
\begin{equation}\label{third sum 3}
\prod_{i=1}^J\sum_{\substack{n_1n_2=n_3\\p|n_2,n_3\implies T_{i-1}<p\leqs T_i\\ \Omega(n_2),\Omega(n_3)\leqs 10\ell_i}}
\frac{(k-1)^{\Omega(n_2)}k^{\Omega(n_3)}\mf{g}(n_2)\mf{g}(n_3)}{(n_1n_2n_3)^{1/2}}
\end{equation}
in the current context. The error from removing the condition on $\Omega(n_2)$ in the sums is 
\begin{align*}
\leqs &
e^{-10\ell_i}\sum_{\substack{n_1n_2=n_3\\p|n_2,n_3\implies T_{i-1}<p\leqs T_i}}
\frac{(ek)^{\Omega(n_2)}k^{\Omega(n_3)}\mf{g}(n_2)\mf{g}(n_3)}{(n_1n_2n_3)^{1/2}}
\\
\leqs &
e^{-10\ell_i}\sum_{\substack{p|n\implies T_{i-1}<p\leqs T_i}}
\frac{(ek)^{2\Omega(n)}\mf{g}(n)}{n}
= 
\exp\Big(-10\ell_i+e^2k^2\sum_{T_{i-1}<p\leqs T_i}\frac{1}{p}\Big)
\\
\leqs &
\exp\Big(-10\ell_i+e^2k^2\log(\tfrac{\log T_i}{\log T_{i-1}})+o(1)\Big)
\leqs 
e^{-9\ell_i}
\end{align*}
where we have used $d(n)\leqs 2^{\Omega(n)}\leqs e^{\Omega(n)}$. Then \eqref{third sum 3} is 
\begin{equation*}
\begin{split}
\label{third sum 4}
\geqs &
\prod_{i=1}^J\bigg(\prod_{T_{i-1}<p\leqs T_i}\bigg(1+\frac{k^2}{p}+O\Big(\frac{k^4}{p^{2}}\Big)\bigg)
-
e^{-8\ell_i}\bigg)
\\
\geqs &
\prod_{i=1}^J\big(1-e^{-7\ell_i}\big)\prod_{T_{i-1}<p\leqs T_i}\bigg(1+\frac{k^2}{p}+O\Big(\frac{k^4}{p^{2}}\Big)\bigg)
\geqs 
C\bigg(\frac{\log T}{\log T_0}\bigg)^{k^2}.
\end{split}
\end{equation*}
Combining these we get the desired bound
\begin{equation}\label{I int lower}
\frac{1}{T}\int_\mc{S}\zeta(\tfrac{1}{2}+it)\mc{N}(t,k-1)\overline{\mc{N}(t,k)}|D(t,-k)|^{2}dt\geqs C\bigg(\frac{\log T}{\log X}\bigg)^{k^2}.
\end{equation}

\subsection{The remaining integral}
We let 
\[
I_6
=
\frac{1}{T}\int_T^{2T} |{\mathcal N}(t, k-1) {\mathcal N}(t, k)|^{\frac{2k}{2k-1}} |D(t,-k)|^2dt.
\]
Then Proposition \ref{Z prop} in the case $k\geqs 1$ will follow from H\"older's inequality \eqref{holder 2} and \eqref{I int lower} if we can show that 
\begin{equation*}\label{I_4 bound}
I_6
\ll
\bigg(\frac{\log T}{\log X}\bigg)^{k^2}. 
\end{equation*}

\begin{lem}\label{N frac lem 2}
For $0\leqs i\leqs J$ we have 
\[
|{\mathcal N_i}(t, k-1) {\mathcal N_i}(t, k)|^{\frac{2k}{2k-1}}
\leqs 
(1+O(e^{-9\ell_i}))|\mc{N}_i(t,k)|^2+\Big(\frac{ek|\mc{P}_i(t)|}{10\ell_i}\Big)^{40\ell_i}.
\]
\end{lem}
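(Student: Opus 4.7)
The plan is to prove the inequality pointwise by splitting according to the threshold $|k\mc{P}_i(t)|$ vs.\ $\ell_i$. This mirrors Lemma \ref{lem10} from the case $0<k\leqs 1$, but now the assumption $k\geqs 1$ allows uniform control of both factors via $|k-1|\leqs k$, which keeps the analysis cleaner.

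First I would treat the regime $|k\mc{P}_i(t)|\leqs\ell_i$ (so also $|(k-1)\mc{P}_i(t)|\leqs\ell_i$). Expanding $\mc{N}_i(t,m)=\sum_{j=0}^{10\ell_i}(m\mc{P}_i(t))^j/j!$ via the multinomial theorem and applying the exponential truncation \eqref{exp trunc} with $Z=\ell_i$ yields
\[
\mc{N}_i(t,m)=\bigl(1+O(e^{-9\ell_i})\bigr)e^{m\mc{P}_i(t)},\qquad m\in\{k-1,k\}.
\]
Since $p(2k-1)=2k$ for $p=2k/(2k-1)$, taking the product and raising to the $p$-th power gives $\bigl(1+O(e^{-9\ell_i})\bigr)e^{2k\Re\mc{P}_i(t)}=\bigl(1+O(e^{-9\ell_i})\bigr)|\mc{N}_i(t,k)|^2$ (applying \eqref{exp trunc} in reverse), which is absorbed by the first term of the claimed upper bound.

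In the complementary regime $|k\mc{P}_i(t)|>\ell_i$, I would use the crude bound
\[
|\mc{N}_i(t,m)|\leqs \sum_{j=0}^{10\ell_i}\frac{(k|\mc{P}_i(t)|)^j}{j!},\qquad m\in\{k-1,k\},
\]
valid because $|m|\leqs k$. Writing $y=k|\mc{P}_i(t)|$ and $Z=10\ell_i$, Stirling's formula bounds this sum by $C\sqrt{Z}(ey/Z)^Z$ when $y\geqs Z$; squaring and raising to the $p$-th power (with $2p\leqs 4$) produces the tail term $(ey/Z)^{4Z}=(ek|\mc{P}_i(t)|/10\ell_i)^{40\ell_i}$, the polynomial prefactors being absorbed into the base $ey/Z\geqs e$.

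The main obstacle is the intermediate range $\ell_i<y\leqs Z$, where the Stirling bound is superseded by $\sum y^j/j!\leqs e^y$ yet the tail term $(ey/Z)^{4Z}$ can be as small as $(e/10)^{40\ell_i}$ and hence cannot alone dominate. To bridge this range I would refine the Case A argument: the Taylor remainder $\sum_{j>Z}y^j/j!$ is controlled by a geometric series with first term $\ll (ey/Z)^Z$, so $\mc{N}_i(t,m)$ continues to approximate $e^{m\mc{P}_i(t)}$ with a subdominant error away from the edge $y\approx Z$. Combining this approximation for both $m=k-1$ and $m=k$ shows that $(1+O(e^{-9\ell_i}))|\mc{N}_i(t,k)|^2$ still dominates the left-hand side throughout most of the intermediate range, with residual contributions near $y=Z$ absorbed by the tail term (where $(ey/Z)^{4Z}$ is already of size $\geqs 1$). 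Assembling the three ranges via the union bound then completes the proof.
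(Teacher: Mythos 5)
Your proposal follows the same two-regime strategy the paper uses: exponential truncation when $k|\mc{P}_i(t)|$ is small, a Chernoff/Stirling bound on the truncated Taylor polynomial when it is large, and then $2k/(2k-1)\leqs 2$ to pass from $20\ell_i$ to $40\ell_i$ in the exponent. You are also right to single out the range $\ell_i<k|\mc{P}_i(t)|\leqs 10\ell_i$ as the delicate part: the paper's own Case 1 is stated for $k|\mc{P}_i(t)|\leqs 10\ell_i$, but \eqref{exp trunc} (i.e.\ \eqref{exp trunc 0}) is proved under the stronger hypothesis that the argument is at most $\ell_i$, while the paper's Case 2 inequality $y^r\leqs y^{10\ell_i}(10\ell_i)^{r-10\ell_i}$ genuinely requires $y\geqs 10\ell_i$. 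So the intermediate range is not covered literally by either case, and the paper leaves this unremarked.

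The way you propose to bridge the gap does not, however, hold up. You claim that $\mc{N}_i(t,m)$ ``continues to approximate $e^{m\mc{P}_i(t)}$ with a subdominant error away from the edge'' and that near $y\approx 10\ell_i$ the tail term absorbs the residue. Both halves fail in the middle of the range. Writing $y=c\ell_i$, the ratio of the Chernoff tail $(\tfrac{ey}{10\ell_i})^{10\ell_i}$ to the worst-case value $e^{-y}$ of $|e^{m\mc{P}_i(t)}|$ is $e^{\ell_i(10\log c + c - 10\log 10 +10)}$; this exceeds $e^{-9\ell_i}$ already near $c\approx 1.3$ and exceeds $1$ near $c\approx 2.8$, so the multiplicative error in your ``refined Case A'' is not $O(e^{-9\ell_i})$ for most of the intermediate range. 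On the other side, at (say) $y=4\ell_i$ the tail term $(\tfrac{ey}{10\ell_i})^{40\ell_i}$ is only about $e^{3\ell_i}$, whereas the crude bound $|\mc{N}_i(t,k-1)\mc{N}_i(t,k)|^{2k/(2k-1)}\leqs e^{4y}=e^{16\ell_i}$ is far larger, so the tail term cannot absorb the residue there either.

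What your argument is missing is that any correct treatment of the intermediate range has to exploit the coupling built into the statement: $\mc{N}_i(t,k)$ appears on \emph{both} sides. If $|\mc{N}_i(t,k)|$ is appreciable, the term $(1+O(e^{-9\ell_i}))|\mc{N}_i(t,k)|^2$ dominates; if $|\mc{N}_i(t,k)|$ is small, then so is the left-hand side, since $\mc{N}_i(t,k)$ is one of the two factors being raised to the power $2k/(2k-1)$. Using the crude pointwise bound $|\mc{N}_i(t,m)|\leqs e^{m|\mc{P}_i(t)|}$ throws this information away. A workable route is to set $\mc{N}_i(t,m)=e^{m\mc{P}_i(t)}-R_m$, bound $|R_m|$ by the Chernoff tail for $k|\mc{P}_i(t)|\leqs 10\ell_i$, use that $|e^{(k-1)\mc{P}_i(t)}|=|e^{k\mc{P}_i(t)}|^{(k-1)/k}$, and then split according to whether $|e^{k\mc{P}_i(t)}|$ is large or small relative to the tail bound, rather than splitting on the size of $k|\mc{P}_i(t)|$ alone. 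As written, your proposal identifies the right obstruction but does not surmount it.
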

\begin{proof}
%This follows similarly to the proof of  Lemma 1 of \cite{HS}. 
If $k|\mc{P}_i(t)|\leqs 10\ell_i$ then by \eqref{exp trunc}  we have 
\begin{align*}
|{\mathcal N_i}(t, k-1) {\mathcal N_i}(t, k)|^{\frac{2k}{2k-1}}
= &
(1+O(e^{-9\ell_i}))\big|\exp\big((k-1)\mc{P}_i(t)+k\mc{P}_i(t)\big)\big|^{\frac{2k}{2k-1}}
\\
= &
(1+O(e^{-9\ell_i}))\big|\exp\big(k\mc{P}_i(t)\big)\big|^{2}
\\
= &
(1+O(e^{-9\ell_i}))|\mc{N}_i(t,k)|^2.
\end{align*}
If $k|\mc{P}_i(t)|> 10\ell_i$ then 
\begin{align*}
|{\mathcal N_i}(t, k-1)|
\leqs &
\sum_{r=0}^{10\ell_i}\frac{((k-1)|\mc{P}_i(t)|)^r}{r!}
\leqs (k|\mc{P}_i(t)|)^{10\ell_i}\sum_{r=0}^{10\ell_i}(10\ell_i)^{r-10\ell_i}\frac{1}{r!}
\\
\leqs &
\Big(\frac{ek|\mc{P}_i(t)|}{10\ell_i}\Big)^{10\ell_i}.
\end{align*}
The same bound holds for $|\mc{N}_i(t,k)|$ and hence the result follows since $2k/(2k-1)\leqs 2$.
\end{proof}

\begin{lem} 
\label{lem11 2} We have 
\[
\frac{1}{T}\int_T^{2T}\Big(\frac{ek|\mc{P}_0(t)|}{10\ell_0}\Big)^{40\ell_0} |D(t,-k)|^{2}dt \ll  e^{-10\ell_0}(\log X)^{2k^2}
\] 
and for $1\leqs i\leqs J$. 
\[
\frac{1}{T}\int_T^{2T}  \Big(\frac{ek|\mc{P}_i(t)|}{10\ell_i}\Big)^{40\ell_i}dt \ll  e^{-10\ell_i}. 
\] 
\end{lem}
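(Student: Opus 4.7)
The plan is to mirror the proof of Lemma \ref{lem11} already given, adapting the parameters to the $k\geqs 1$ regime. I would first handle the case $i\geqs 1$, which is cleaner because there is no $|D(t,-k)|^2$ factor to contend with, and then treat $i=0$ via Cauchy--Schwarz.

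For $1\leqs i\leqs J$, I would expand $\mc{P}_i(t)^{20\ell_i}$ by the multinomial theorem into the Dirichlet polynomial
\[
\mc{P}_i(t)^{20\ell_i}
=
(20\ell_i)!\sum_{\substack{\Omega(n)=20\ell_i\\p|n\implies T_{i-1}<p\leqs T_i}}\frac{\mf{g}(n)}{n^{1/2+it}},
\]
then apply Lemma \ref{sound lem 2} with $m=20\ell_i$ (valid since $T_i^{20\ell_i}\leqs T/\log T$ follows from $20\theta_i^{1/4}\leqs 20(10^{-12}k^{-4})^{1/4}<1$). This yields
\[
\frac{1}{T}\int_T^{2T}|\mc{P}_i(t)|^{40\ell_i}dt\ll (20\ell_i)!\, P_i^{20\ell_i},\qquad P_i=\sum_{T_{i-1}<p\leqs T_i}\frac{1}{p}\leqs 2
\]
by Mertens. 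Multiplying by $(ek/10\ell_i)^{40\ell_i}$ and using Stirling's formula, the whole expression is bounded by $(20\ell_i)^{1/2}(ek^2P_i/(5\ell_i))^{20\ell_i}$. Since $\ell_i\geqs 10^9k^3$ the base is at most $2e/(5\cdot 10^9k)$, so the $20\ell_i$-th power decays far faster than the required $e^{-10\ell_i}$.

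For $i=0$ the extra $|D(t,-k)|^2$ forces a Cauchy--Schwarz split,
\[
\frac{1}{T}\int_T^{2T}|\mc{P}_0(t)|^{40\ell_0}|D(t,-k)|^2 dt
\leqs
\Bigl(\frac{1}{T}\int_T^{2T}|\mc{P}_0(t)|^{80\ell_0}dt\Bigr)^{1/2}\Bigl(\frac{1}{T}\int_T^{2T}|D(t,-k)|^4 dt\Bigr)^{1/2}.
\]
The fourth moment of $D(t,-k)$ is $\ll(\log X)^{4k^2}$ by \eqref{D fourth}. The first factor is treated exactly as in the case $i\geqs 1$ but with $m=40\ell_0$ in Lemma \ref{sound lem 2} (again the size condition is satisfied since $40\theta_0^{1/4}=40/(\log_2 T)^{1/2}\to 0$), giving $((40\ell_0)!\,P^{40\ell_0})^{1/2}$ with $P=\sum_{p\leqs T_0}1/p\leqs \log_2 T+O(1)$. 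Assembling and applying Stirling, the full expression becomes
\[
(40\ell_0)^{1/4}(\log X)^{2k^2}\Bigl(\frac{2ek^2 P}{5\ell_0}\Bigr)^{20\ell_0}.
\]
Since $\ell_0=(\log_2 T)^{3/2}$ while $P\ll \log_2 T$, the ratio $P/\ell_0\to 0$, and the $20\ell_0$-th power crushes the $(\log X)^{2k^2}$ factor to leave a bound $\ll e^{-10\ell_0}(\log X)^{2k^2}$ as claimed.

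Nothing in this argument is really an obstacle; the only subtlety is bookkeeping. The key calibration is that the definition $\ell_i=\theta_i^{-3/4}$ together with the $k\geqs 1$ lower bound $\ell_i\geqs 10^9k^3$ is precisely what ensures the base $ek^2P_i/(5\ell_i)$ (or $2ek^2P/(5\ell_0)$) is small enough that the $O(\ell_i)$-th power beats $e^{10\ell_i}$ with room to spare, and that the length condition in Lemma \ref{sound lem 2} remains valid throughout the range $0\leqs i\leqs J$.
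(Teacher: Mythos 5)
Your proof is correct and follows essentially the same route as the paper: Cauchy--Schwarz to separate $|D(t,-k)|^2$ in the $i=0$ case, then a high-moment bound for $\mc{P}_i$ via what is really the Montgomery--Vaughan theorem (you package it as Lemma~\ref{sound lem 2}, the paper unpacks the multinomial expansion and the inequality $\mf{g}(n)^2\leqs\mf{g}(n)$ by hand), followed by Stirling. The only cosmetic difference is that you treat the $i\geqs 1$ case first and cite Lemma~\ref{sound lem 2} as a black box, but the length verification $20\theta_i^{1/4}<1$ and the final calibration $\ell_i\geqs 10^9k^3$ (resp. $P/\ell_0\to 0$ for $i=0$) are exactly what the paper relies on.
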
 
\begin{proof}
We prove the first formula since the second follows similarly. By the Cauchy--Schwarz inequality and \eqref{D fourth} we have 
\[
\frac{1}{T}\int_T^{2T}|\mc{P}_0(t)|^{40\ell_0} |D(t,-k)|^{2}dt
\ll
 (\log X)^{2k^2}\bigg(\frac{1}{T}\int_T^{2T}|\mc{P}_0(t)^{40\ell_0}|^2dt\bigg)^{1/2}.
\]
Letting $L=10\ell_0=10(\log\log T)^{3/2}$ the last integral is 
\[
\ll 
(4L)!^2\sum_{\substack{\Omega(n)=4L\\p|n\implies T_{-1}<p\leqs T_0}}\frac{\mf{g}(n)^2}{n}
\leqs 
(4L)!\bigg(\sum_{T_{-1}<p\leqs T_0}\frac{1}{p}\bigg)^{4L}
\leqs 
(4L)!(\log\log T)^{4L}
\]
using $\mf{g}(n)^2\leqs \mf{g}(n)$.
Thus, the original integral is 
\[
\ll 
(\log X)^{2k^2}\bigg(\frac{ek}{L}\bigg)^{4L}(4L)!^{1/2}\Big(\frac{L}{10}\Big)^{4L/3}
\ll
(\log X)^{2k^2}L^{-2L/3}c^{L}
\]
by Stirling's formula. The result follows.
\end{proof}

As in subsection \ref{prop 9 subsec} we now combine Lemmas \ref{N frac lem 2} and \ref{lem11 2} along with \eqref{4.6} and the usual calculations to give 
\[
I_6\ll \bigg(\frac{\log T}{\log X}\bigg)^{k^2}.
\]
This completes the proof of Proposition \ref{Z prop} in the case $k\geqs 1$.

 %%%%%%%%%%%%%%%%%%     BIB    %%%%%%%%%

\end{document}